\newtheorem{secthm}{Theorem}[section]
\newtheorem{seclem}{Lemma}[section]
\newtheorem{secrem}{Remark}[section]
\newtheorem{seccor}{Corollary}[section]
\newtheorem{secdefn}{Definition}[section]
\newtheorem{subsecthm}{Theorem}[subsection]
\newtheorem{subseclem}{Lemma}[subsection]
\newtheorem{subsecrem}{Remark}[subsection]
\numberwithin{equation}{section}
\newtheorem{secthmA}{Theorem A\!\!}
\newtheorem{prop}{Proposition}
\def\E{\mathbb{E}}
\def\a{\alpha}
\def\b{\beta}
\def\notin{\epsilon \hspace{-0.37em}/}
\def\rd{\color{red}}
\date{2-17-2018}                                           
\subjclass[2010]{Primary 60B12, 60F15; Secondary 28C20, 60G10, 60G15}
\keywords{maxima, law of large numbers, gaussian} 
\begin{document}

\title{Limits for Partial Maxima of Gaussian Random Vectors}
\author{James Kuelbs}
\address{James Kuelbs\\ Department of Mathematics,  University of Wisconsin, Madison, WI 53706-1388} 
\email{kuelbs@math.wisc.edu}

\author{Joel Zinn}
\address{Joel Zinn\\ Department of Mathematics, Texas A\&M University, College Station, TX 77843-3368} 
\email{joel.zinn@gmail.com}

\begin{abstract}
We obtain almost sure limit theorems for partial maxima of norms of a sequence of Banach-valued Gaussian random variables. 
\end{abstract}

\maketitle

\section{Introduction}\label{intro}  Limit theorems for various maximal functions of a sequence of random variables or a continuous time process have a rich and extensive history. They include distributional results as well as almost sure results in a variety of settings, and here we obtain related results for Gaussian sequences and the Ornstein-Uhlenbeck process with values in a separable Banach space. The results we establish are almost sure limit theorems motivated by the work of Berman \cite{berman-LLN} and Pickands \cite{pickands} for real-valued random variables, which can be viewed as laws of large numbers when the maximal functions are appropriately centered. 
We also deal with some almost sure results related to the classical Darling-Erd\H os Theorem \cite{darling-erdos}. 

We now state a few of the results mentioned above, so the reader can have some points of comparison. Since the Darling-Erd\H os Theorem was the earliest of those in the general one-dimensional context, we'll start with that.\\

{\bf Notation}: Throughout the paper we take $Lx=: \max\{\log_{e}x, 1\}$.\\

Motivated by a paper of Robbins \cite{robbins-aspects}, Darling and Erd\H os \cite{darling-erdos} obtained the following distributional limit theorem. 
\begin{secthmA}\label{darling-erdos}Let $\{\xi, \xi_{j}\colon j\ge 1\}$ be iid rv's with $\E \xi =0, \E \xi^{2}=1$ and $\E |\xi |^{3}<\infty$. Further, let $S_{k}=\sum_{j=1}^{k}\xi_{j}$ and 
\[\a_{n}=(2LLn)^{1/2}\ \text{and}\ \b_{n}=(2LLn+\frac12LLLn-\frac12L(4\pi) ).\] 
Then for every $x\in\mathbb{R}$,
\begin{align}\label{DE eqn}&\lim_{n\to\infty}\Pr\big(\a_{n}(\max_{k\le n}k^{-1/2}S_{k}- \frac{\b_{n}} {\a_{n}})\le x\big)=\exp\{ -e^{-x}\}.
\end{align}
\end{secthmA}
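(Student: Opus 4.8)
The plan is to transfer \eqref{DE eqn} to the extreme-value behaviour of a stationary Ornstein--Uhlenbeck process. The first step is a strong approximation: since $\E|\xi|^{3}<\infty$, by the Koml\'os--Major--Tusn\'ady embedding (in the three-moment version due to Major) one may realize $\{S_{k}\}$ and a standard Brownian motion $W$ on a common probability space with $|S_{k}-W(k)|=o(k^{1/3})$ a.s. Fix a slowly growing cutoff, say $T_{n}=(Ln)^{2}$. Then $\sup_{T_{n}\le k\le n}k^{-1/2}|S_{k}-W(k)|=o(T_{n}^{-1/6})$ a.s., so $\a_{n}\sup_{T_{n}\le k\le n}k^{-1/2}|S_{k}-W(k)|=o\big((2LLn)^{1/2}(\log n)^{-1/3}\big)\to0$ a.s., and we may replace $S_{k}$ by $W(k)$ for $k\ge T_{n}$. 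For $k<T_{n}$ the Hartman--Wintner law of the iterated logarithm gives $\max_{k\le T_{n}}k^{-1/2}S_{k}=O_{P}\big((LLT_{n})^{1/2}\big)=O_{P}\big((LLLn)^{1/2}\big)$, which is $o(\b_{n}/\a_{n})$ because $\b_{n}/\a_{n}\sim(2LLn)^{1/2}$; hence $\Pr\big(\max_{k\le T_{n}}k^{-1/2}S_{k}>\b_{n}/\a_{n}+x/\a_{n}\big)\to0$, so the range $k<T_{n}$ is asymptotically irrelevant. Consequently $\max_{k\le n}k^{-1/2}S_{k}$ and $\max_{T_{n}\le k\le n}k^{-1/2}W(k)$ obey the same limit law under the normalization of \eqref{DE eqn}.

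The second step is the time change $U(t):=e^{-t/2}W(e^{t})$, $t\ge0$, which is the stationary Gaussian process with mean $0$, variance $1$ and covariance $r(u)=e^{-|u|/2}$, so that $1-r(u)\sim\tfrac12|u|$ as $u\to0$ and $r(u)\log u\to0$ as $u\to\infty$. Putting $k=e^{t}$ we have exactly $k^{-1/2}W(k)=U(\log k)$, so $\max_{T_{n}\le k\le n}k^{-1/2}W(k)$ is the maximum of $U$ over the non-uniform grid $\{\log k:\ T_{n}\le k\le n\}\subset[\log T_{n},\,Ln]$, whose mesh is $\log(1+1/k)\le 1/T_{n}\to0$. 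Because the mesh (even at its coarsest) tends to $0$ fast enough that $q_{n}u_{n}^{2}\to0$ at the relevant level $u_{n}\sim(2LLn)^{1/2}$, the usual Gaussian comparison and discretization estimates for stationary processes (Slepian's inequality and the grid lemmas of Leadbetter--Lindgren--Rootz\'en) show this grid maximum has the same limiting distribution as the continuous maximum $\max_{0\le t\le Ln}U(t)$; the lower segment $[0,\log T_{n}]$ is again negligible since $\max_{[0,\log T_{n}]}U=O_{P}((LLT_{n})^{1/2})=o((LLn)^{1/2})$.

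The third and main step is to quote the classical extreme-value theorem for $U$. For a stationary Gaussian process with $1-r(u)\sim C|u|^{\alpha}$ at $0$ and $r(u)\log u\to0$ at $\infty$, Pickands' theorem gives $\Pr\big((2\log T)^{1/2}\big(\max_{[0,T]}U-b_{T}\big)\le x\big)\to\exp\{-e^{-x}\}$ with
\[ b_{T}=(2\log T)^{1/2}+(2\log T)^{-1/2}\Big[\big(\tfrac{1}{\alpha}-\tfrac{1}{2}\big)\log\log T+\log\big(2^{1/\alpha-1/2}C^{1/\alpha}H_{\alpha}(2\pi)^{-1/2}\big)\Big], \]
where $H_{\alpha}$ is Pickands' constant. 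For $U$ we have $\alpha=1$, $C=\tfrac12$, $H_{1}=1$, so the bracket equals $\tfrac12\log\log T+\log\big(1/(2\sqrt{\pi})\big)$; taking $T=Ln$, so that $\log T=LLn$ and $\log\log T=LLLn$, a routine computation gives $(2\log T)^{1/2}b_{T}=2LLn+\tfrac12LLLn-\tfrac12L(4\pi)=\b_{n}$ and $(2\log T)^{1/2}=\a_{n}$, which is precisely the content of \eqref{DE eqn}.

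The main obstacle is in the last two steps rather than the first. One must verify that the non-uniform grid $\{\log k\}$ introduces no extra (smaller) Pickands-type grid constant --- this is exactly where the vanishing of its mesh is used --- and one must carry the normalizing sequences in Pickands' theorem through the case $\alpha=1$, $C=\tfrac12$ carefully enough that the correction term $\tfrac12LLLn-\tfrac12L(4\pi)$ comes out on the nose. The probabilistic reductions in the first step (the embedding and the LIL bound on the small-$k$ range) are by comparison routine. One could instead bypass the strong approximation and argue directly by a blocking argument together with the Berry--Esseen theorem, as in the original work of Darling and Erd\H os, but the route through the Ornstein--Uhlenbeck process is shorter given the modern toolkit.
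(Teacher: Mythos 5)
This statement is the paper's Theorem A, the classical Darling--Erd\H{o}s theorem: the paper quotes it from \cite{darling-erdos} without proof, and the remark following it only sketches the architecture of the original argument (first the Gaussian case, then Berry--Esseen, with the invariance principle, the LIL, and a comparison with the Ornstein--Uhlenbeck process along the way). Your proposal is a correct proof outline, and it is genuinely different in its first step: you replace the Berry--Esseen/blocking reduction of Darling and Erd\H{o}s by the Koml\'os--Major--Tusn\'ady/Major strong approximation, which is exactly where the hypothesis $\E|\xi|^3<\infty$ enters (the $o(k^{1/3})$ rate is what makes $\a_n\sup_{k\ge T_n}k^{-1/2}|S_k-W(k)|\to0$ work; with only two moments the theorem can fail, so the moment condition is doing real work here). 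From the Gaussian stage onward your route and the original one coincide in spirit --- both pass to the stationary process $e^{-t/2}W(e^t)$ with covariance $e^{-|u|/2}$ --- except that you quote Pickands' theorem (which postdates Darling--Erd\H{o}s) for the extreme-value constants, whereas they computed the tail of $\max_{[0,T]}U$ directly. Your bookkeeping is right: $1-r(u)\sim\tfrac12|u|$ gives $\alpha=1$, $C=\tfrac12$, $H_1=1$, hence the constant $(2\pi)^{-1/2}\cdot\tfrac12\cdot\sqrt{2}=(4\pi)^{-1/2}$ and the correction $\tfrac12LLLn-\tfrac12L(4\pi)$ on the nose, and replacing the interval $[\log T_n,Ln]$ by $[0,Ln]$ shifts $\a_Tb_T$ by only $o(1)$. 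The two points you flag as delicate are indeed the only ones needing care: the LIL bound disposing of $k<T_n$, and the verification that the non-uniform grid $\{\log k\}$ is a dense Pickands grid, for which $q_nu_n^2\to0$ with $q_n\le T_n^{-1}=(Ln)^{-2}$ and $u_n^2\sim2LLn$ is the correct sufficient condition (the discretization lemmas are monotone in the mesh, so non-uniformity is harmless). What the modern route buys is brevity and a clean separation of the probabilistic approximation from the Gaussian extreme-value calculation; what the original buys is weaker machinery, at the cost of a much more intricate direct comparison. Either way, the argument is sound.
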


\begin{secrem}Note that this differs from a corollary of the ``invariance principle'', which yields a distributional limit theorem for $\max_{k\le n}n^{-1/2}S_{k}$.
Hence, as one might expect, the result obtained in (\ref{DE eqn}) involves a number of intricate steps. The first  establishes the Gaussian case, and then the Berry-Esseen theorem is used to obtain the result just stated. Along the way a form of the invariance principle, the law of the iterated logarithm, and a comparison to the Ornstein-Uhlenbeck process are employed
\end{secrem}

The law of large numbers results by Berman \cite{berman-LLN} and followed by Pickands \cite{pickands} are as follows.

\begin{secthmA}(Berman 1962) Let $\{\eta_{j}\}$ be a stationary Gaussian process such that  $\E \eta_{j}=0, \E\eta^{2}_{j}=1$ and for $\E\eta_{1}\eta_{j}=r_{j},\ nr_{n}\to 0$. Then $\max_{k\le n}\eta_{k}-(2Ln)^{1/2}\to 0$, in probability.
\end{secthmA}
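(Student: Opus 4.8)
The plan is to establish the two one‑sided bounds separately. Fix $\epsilon>0$ and write $u_n^{\pm}=(2Ln)^{1/2}\pm\epsilon$; it suffices to show $\Pr(\max_{k\le n}\eta_k>u_n^{+})\to0$ and $\Pr(\max_{k\le n}\eta_k<u_n^{-})\to0$. The only analytic input is the Gaussian tail estimate $\bar\Phi(u):=\Pr(\eta_1>u)\sim\phi(u)/u$ as $u\to\infty$, which gives $n\bar\Phi(u_n^{+})\le c\,e^{-\epsilon(2Ln)^{1/2}}\to0$ while $n\bar\Phi(u_n^{-})\to\infty$ (indeed $n\bar\Phi(u_n^{-})\asymp u_n^{-1}e^{\epsilon(2Ln)^{1/2}}$); so for an i.i.d.\ sequence the theorem is immediate and the entire issue is controlling the Gaussian dependence. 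Observe first that $nr_n\to0$ forces $\sup_{k\ge1}|r_k|<1$: if $|r_{k_0}|=1$ then by stationarity $|r_{jk_0}|=1$ for every $j$, contradicting $jk_0\,r_{jk_0}\to0$; so the process is non‑degenerate and the statement is not vacuous.

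The upper bound is just Boole's inequality: $\Pr(\max_{k\le n}\eta_k>u_n^{+})\le n\bar\Phi(u_n^{+})\to0$, using only that each $\eta_k$ is standard normal (no correlation hypothesis needed).

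For the lower bound I would run the second moment method. Set $u=u_n^{-}$, $N_n=\#\{k\le n:\eta_k>u\}$, so $\Pr(\max_{k\le n}\eta_k<u)=\Pr(N_n=0)\le\Var(N_n)/(\E N_n)^2$ by Chebyshev's inequality, and $\E N_n=n\bar\Phi(u)\to\infty$. The diagonal contribution to $\Var(N_n)$ is at most $\E N_n=o((\E N_n)^2)$. For the off‑diagonal terms, the classical identity $\partial_r\Pr(\eta_i>u,\eta_j>u)=(2\pi)^{-1}(1-r^2)^{-1/2}\exp(-u^2/(1+r))$ (Plackett), with $r=r_{j-i}$ and integrated from $0$, shows that $\Pr(\eta_i>u,\eta_j>u)-\bar\Phi(u)^2\le0$ whenever $r_{j-i}\le0$ and is $\le C|r_{j-i}|\exp(-u^2/(1+|r_{j-i}|))$ whenever $0<r_{j-i}\le\tfrac12$, while for short lags one uses the crude bound $\Pr(\eta_i>u,\eta_j>u)\le\bar\Phi(u)$. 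Splitting the double sum at $m_n=\lfloor(\log n)^2\rfloor$: the short‑lag part ($|i-j|\le m_n$) contributes $\lesssim n\,m_n\,\bar\Phi(u)=m_n\,\E N_n$, so after division by $(\E N_n)^2$ it is $\lesssim m_n/\E N_n\to0$ because $\E N_n\asymp(\log n)^{-1/2}e^{\epsilon(2Ln)^{1/2}}$ grows faster than any power of $\log n$; the long‑lag part uses $nr_n\to0$, which gives $|r_k|\le 1/k\le 1/m_n$ and hence $u^2|r_k|\le 2(\log n)/m_n\to0$ uniformly for $k>m_n$, so with $(\E N_n)^2\asymp n^2u^{-2}e^{-u^2}$ this part of $\Var(N_n)/(\E N_n)^2$ is $\lesssim \frac{u^2}{n}\sum_{k>m_n}|r_k|\exp\!\big(\tfrac{u^2|r_k|}{1+|r_k|}\big)\lesssim\frac{u^2}{n}\sum_{k>m_n}\frac1k\lesssim\frac{(\log n)^2}{n}\to0$. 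Hence $\Var(N_n)/(\E N_n)^2\to0$, giving $\Pr(\max_{k\le n}\eta_k<u_n^{-})\to0$, and combined with the upper bound this yields $\max_{k\le n}\eta_k-(2Ln)^{1/2}\to0$ in probability.

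The main obstacle is exactly this balancing act: the hypothesis $nr_n\to0$ controls the far‑lag correlations very strongly but says nothing about a block of near‑lag correlations (growing like $(\log n)^2$, each possibly of size close to $1$) that, if estimated via the Plackett bound, would contribute a fatal factor $\exp(c\,u^2)=n^{c}$; the resolution is that $\E N_n=n\bar\Phi((2Ln)^{1/2}-\epsilon)$ is only \emph{barely} bigger than $1$---it grows like $e^{\epsilon(2\log n)^{1/2}}$, super‑polynomially in $\log n$ but sub‑polynomially in $n$---which is just enough to swallow the $m_n=(\log n)^2$ near‑lag terms while still leaving $m_n\gg\log n\asymp u^2$ so that the far‑lag terms vanish. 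An essentially equivalent route is to invoke Berman's normal comparison inequality to bound $|\Pr(\max_{k\le n}\eta_k\le u)-\Phi(u)^n|$ by a sum of the form $\sum_{i<j}|r_{j-i}|\exp(-u^2/(1+|r_{j-i}|))$ and reduce directly to the elementary i.i.d.\ case; the estimates required are the same.
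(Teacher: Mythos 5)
The paper does not prove this statement: Theorem~A is quoted as background from Berman's 1962 paper (and Pickands' extension to a.s.\ convergence), so there is no internal proof to compare against. Judged on its own, your argument is correct and is essentially the classical route --- union bound for the upper tail, and the second--moment method combined with the Plackett/normal--comparison bound for the lower tail --- which is the same circle of ideas as Berman's original proof. The key quantitative points all check out: $\E N_n=n\bar\Phi(\sqrt{2Ln}-\epsilon)\asymp(\log n)^{-1/2}e^{\epsilon\sqrt{2\log n}}$ does dominate any power of $\log n$, so the $m_n=(\log n)^2$ short--lag block absorbed by the crude bound $\Pr(\eta_i>u,\eta_j>u)\le\bar\Phi(u)$ is harmless; and for lags $k>m_n$ the hypothesis $nr_n\to0$ gives $|r_k|\le 1/k$ for all large $k$ (valid uniformly over $k>m_n$ once $n$ is large, since $m_n\to\infty$), whence $u^2|r_k|\le 2/\log n\to 0$ and the comparison bound makes the long--lag contribution $O((\log n)^2/n)$. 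Two cosmetic points only: the paper's indexing $r_j=\E\eta_1\eta_j$ means $r_1=1$ and the lag is $j-1$, which does not affect anything; and your preliminary observation that $\sup_k|r_k|<1$ is not actually used anywhere in the argument, since the short--lag terms are estimated without any correlation information. Your closing remark is also accurate: one can instead apply Berman's comparison inequality to $|\Pr(\max_{k\le n}\eta_k\le u)-\Phi(u)^n|$ and reduce to the i.i.d.\ case, with the identical lag--splitting estimate.
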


The results in Pickands extend this result to convergence a.s, and also obtains similar results for real-valued continuous time stationary Gaussian processes.

In the recent paper \cite{darling-erdos-euclidean} Dierickx and Einmahl establish a version of the Darling-Erd\H os Theorem for the Euclidean norm of sums of $\mathbb{R}^d$-valued random vectors. Thus one might ask whether similar results might hold for Banach-valued random variables. Some first results related to almost sure limits are obtained in Corollary 4.1 below, but even for Gaussian random vectors much remains unsettled. In fact, whether the Darling-Erd\H os theorem is valid in the setting we are studying remains open. The potential difficulties dealing with other norms on finite dimensional spaces seem considerable - much less norms on infinite dimensional spaces. Furthermore, since the Darling-Erd\H os theorem deals with (partial sums of) iid rv's under a moment condition with CLT normalizations, and CLT's in the infinite dimensional setting require restrictions on the geometry of the space, there are many additional difficulties. Hence, as was done in the study of limit theorems for Banach-valued rv's, it is natural to first consider the case of the law of large numbers in this setting. In addition, the nature of the explicit centerings and the importance of Gaussian random variables in the proofs, even for real-valued random variables and processes, suggests that tools developed for log and loglog laws for Banach-valued random vectors as in \cite{carmona-kono}, \cite{led-tal-LIL}, \cite{lepage-LIL} and \cite{goodman-kuelbs-weakly} may be of some use. We have found this to be the case for the law of large number results established in this paper,  but explicit distributional results such as those of the  Darling-Erd\H os theorem involve additional difficulties.

Goodman's paper \cite{goodman-normal} provided additional motivation for the questions addressed here, and the first part of Theorem 2.1 in \cite{goodman-normal} as it relates to the maxima of  the norm of i.i.d. Gaussian random vectors in the Banach space setting emerges as a special case of (\ref{lim Mn}) in Corollary \ref{limsup le 0 f0 stat}.

In Section \ref{strong law seq} we present some notation and strong law results for maxima of the norm of sequences of Banach-valued random vectors, and their proofs. Section \ref{strong law continuous} provides analogous results for continuous time processes, a corollary for stationary Gaussian processes in this setting, and the proofs of these results. Section \ref{banach OU} studies the Ornstein-Uhlenbeck process, and its implications for maximal functions of norms of normalized partial sums in the Banach space setting. These are quantities that would naturally appear in a Darling-Erd\H{o}s result for normalized partial sums of centered Gaussian random vectors. 

Section \ref{banach OU} also provides some applications to self-adjoint operator-valued Gaussian random vectors and their spectrums, and symmetrization results in Proposition \ref{centering}. When combined with the centerings in our law of large number results as in Proposition \ref{medians via symm}, these symmetrizations allow us to determine the asymptotic behavior of the various medians of these partial maxima. Determining such asymptotics by direct calculation appears far less promising.

Section \ref{OU} starts with details on the sample function continuity of the Banach-valued Ornstein-Uhlenbeck process. This allows us to examine this process in enough detail that the technical assumptions for the continuous time results of Section \ref{strong law continuous} emerge as consequences of the definition of the process itself.

\section{Strong Laws for Partial Maxima of Sequences}\label{strong law seq} 

Throughout this section $B$ is a separable Banach space over the reals with norm $\|\cdot \|$, and its dual space is denoted by $B^*$ with norm $\|\cdot \|^{*}$. A probability measure $\mu$ on the Borel subsets of $B$ is a centered Gaussian measure if every linear functional $h \in B^*$ has a Gaussian distribution with mean zero and variance
$\int_Bh^2(x)d\mu(x)$. Since $\mu$ is a centered Gaussian measure on $B$ we recall some necessary notation that can be found in Lemma 2.1 of \cite{kuelbs-strong-convergence}.  
That is, since $\mu$ is a centered Gaussian measure, then
\begin{align}\label{gaussian variance}&
\int_B \|x\|^{2}\, d\mu(x)< \infty. 
\end{align}
and there is a unique Hilbert space $H_{\mu}\subseteq B$ given by the completion of the linear space $S(B^{*})$ in the inner product
\begin{align}\label{S inner product}\langle Sf,Sg\rangle_{\mu} =\int_B f(x)(g)d\mu(x),  
\end{align}
where
\begin{align}\label{Sf}
Sf=: \int_B xf(x)d\mu(x), f \in B^*,
\end{align}
are Bochner integrals in $B$. Furthermore, Lemma 2.1 in \cite{kuelbs-strong-convergence} 
implies 
\begin{align}
\sigma(\mu)=: \sup_{\|f\|^{*} \le 1}(\int_{B} f^2(x)d\mu(x))^{1/2} < \infty,
\end{align}
\begin{align}
\|x\| \le \sigma(\mu)\|x\|_{\mu},\, x \in H_{\mu},
\end{align}
and the unit ball of $H_{\mu}$,
\begin{align}
K=:\{x \in H_{\mu}: \|x\|_{\mu}\le 1\}, 
\end{align}
is a compact subset of $B$ with $\Gamma=:\sup_{x \in K}\|x\|$ finite. The Hilbert space $H_{\mu}$ is often referred to as the Hilbert space that generates the Gaussian measure $\mu$.

The following lemma further explains the notation used in our results. It links $\Gamma$ to $\sigma(\mu)$ in (\ref{gamma=sigma}) and also to the variance of a single linear functional in (\ref{int f_{0}^{2}}). This is relevant in that it links the normalizations of the terms maximized in our limit theorems to those one would use for i.i.d. real-valued centered Gaussian variables. It appears in connection with the assumptions (\ref{liminf max fk ge 0}) and (\ref{liminf max f0 ge 0}) we use to obtain the liminf in our strong law results for maximuma of sequences of centered Gaussian random vectors, and at least in Corollary \ref{limsup le 0 f0 stat} the lemma allows us to simplify these cumbersome assumptions because of what is known for real-valued Gaussian random variables.

\begin{seclem}\label{link gamma sigma} Let $\mu$ be a mean zero Gaussian measure on $B$ with norm $\|\cdot \|$, and assume $H_{\mu}$ is as above with (2.2)-(2.6) 
holding. If $\Gamma= \sup_{ x \in K} \|x\|,$ then 
\begin{align}\label{gamma=sigma}
\Gamma =\sigma(\mu).
\end{align}
Moreover, if $f_0 \in B^*$ and $ x_0 \in K$ with $\|x_0\|=\Gamma, f_0(x_0)= \Gamma,$ and $\|f_0\|^*=1$, then 
\begin{align}\label{int f_{0}^{2}}
(\int_B f_{0}^{2} (x) d\mu(x))^{1/2}=\Gamma.
\end{align}
Furthermore, if we define $\tilde {q}(x) = \|x\|/\Gamma, x \in B$, then $\tilde {q}(\cdot)$ is a norm on $B$ with dual norm  $\tilde{q}^*(f) = \sup_{\{ x: \tilde{q}(x)\le 1\}}f(x), f \in B^*$, and
\begin{align}\label{Gamma q tilde normalization}\Gamma_{\tilde{q}}=& \sigma(\mu)_{\tilde{q}}=1,
\end{align}
where
\begin{align}&\Gamma_{\tilde q}=: \sup_{x \in K} \tilde{q}(x)~{\rm{and}}~\sigma(\mu)_{ \tilde {q}} =: \sup_{{\tilde {q}}^*(f) \le 1}(\int_{B} f^2(x)d\mu(x))^{1/2} < \infty. 
\end{align}
In addition, if $f_0 \in B^*$ and $ x_0 \in K$ with $\tilde{q}(x_0)=1, f_0(x_0)=1$, and $ \tilde{q}^{*}( f_0)=1$, then
\begin{align}\label{f0 normalization}
(\int_B f_{0}^{2} (x) d\mu(x))^{1/2}=1.
\end{align}
\end{seclem}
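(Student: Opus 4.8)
The plan is to establish the three displayed identities in sequence, getting the rest for free by the homogeneity of everything under the rescaling $x \mapsto x/\Gamma$.

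First I would prove $\Gamma = \sigma(\mu)$ in \eqref{gamma=sigma}. The inequality $\Gamma \le \sigma(\mu)$ is immediate from (2.5): for $x \in K$ we have $\|x\| \le \sigma(\mu)\|x\|_\mu \le \sigma(\mu)$. For the reverse inequality, I would use duality. For any $f \in B^*$ with $\|f\|^* \le 1$ one has $\int_B f^2\, d\mu = \langle Sf, Sf\rangle_\mu = \|Sf\|_\mu^2$, so $(\int_B f^2\, d\mu)^{1/2} = \|Sf\|_\mu$. Now I claim $Sf \in H_\mu$ has $\|Sf\|_\mu = \sup_{x \in K} f(x)$: indeed for $x \in K$, $f(x) = \langle Sf, x\rangle_\mu \le \|Sf\|_\mu \|x\|_\mu \le \|Sf\|_\mu$ by Cauchy–Schwarz, and equality is approached along $x = Sf/\|Sf\|_\mu$. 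Hence $(\int_B f^2\, d\mu)^{1/2} = \sup_{x \in K} f(x) \le \sup_{x \in K}\|x\| = \Gamma$ whenever $\|f\|^* \le 1$, and taking the supremum over such $f$ gives $\sigma(\mu) \le \Gamma$. The main obstacle here — really the only delicate point in the whole lemma — is justifying that the supremum defining $\sigma(\mu)$ and the supremum $\sup_{x\in K}\|x\|$ are genuinely attained (so that the extremal $f_0, x_0$ in the statement exist): this follows because $K$ is compact in $B$ so $\sup_{x\in K}\|x\|$ is attained at some $x_0$, and then by Hahn–Banach one picks $f_0$ with $\|f_0\|^* = 1$ and $f_0(x_0) = \|x_0\| = \Gamma$; one must then check such $f_0$ is extremal for $\sigma(\mu)$, which is exactly the chain of equalities just displayed run backwards.

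Next, for \eqref{int f_{0}^{2}}, with $x_0 \in K$, $\|x_0\| = \Gamma$, $f_0(x_0) = \Gamma$, $\|f_0\|^* = 1$, I would simply observe that all inequalities in the string $\Gamma = f_0(x_0) = \langle Sf_0, x_0\rangle_\mu \le \|Sf_0\|_\mu\|x_0\|_\mu \le \|Sf_0\|_\mu = (\int_B f_0^2\, d\mu)^{1/2} \le \Gamma$ must be equalities, where the final inequality is the one proved in the previous paragraph. This forces $(\int_B f_0^2\, d\mu)^{1/2} = \Gamma$.

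Finally, for the $\tilde q$-statements, the point is that $\tilde q = \|\cdot\|/\Gamma$ is a norm precisely because $\Gamma > 0$ (which holds since $\mu$ is not the point mass at $0$; if it were, every statement is vacuous or trivial), and that rescaling the norm by $1/\Gamma$ rescales its dual norm by $\Gamma$, hence $\tilde q^*(f) = \Gamma\|f\|^*$ and $\{x : \tilde q(x) \le 1\} = \Gamma \cdot \{x : \|x\| \le 1\}$. Then $\Gamma_{\tilde q} = \sup_{x\in K}\tilde q(x) = \Gamma^{-1}\sup_{x\in K}\|x\| = \Gamma^{-1}\Gamma = 1$, and $\sigma(\mu)_{\tilde q} = \sup_{\tilde q^*(f)\le 1}(\int_B f^2\, d\mu)^{1/2} = \sup_{\|f\|^* \le 1/\Gamma}(\int_B f^2\, d\mu)^{1/2} = \Gamma^{-1}\sigma(\mu) = 1$ by \eqref{gamma=sigma}; this gives \eqref{Gamma q tilde normalization}. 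For \eqref{f0 normalization}, note that the hypotheses $\tilde q(x_0) = 1$, $f_0(x_0) = 1$, $\tilde q^*(f_0) = 1$ translate into $\|x_0\| = \Gamma$, $f_0(x_0) = 1$, $\|f_0\|^* = 1/\Gamma$; replacing $f_0$ by $\Gamma f_0$ puts us exactly in the situation of \eqref{int f_{0}^{2}} with extremal data $(x_0, \Gamma f_0)$, so $(\int_B (\Gamma f_0)^2\, d\mu)^{1/2} = \Gamma$, i.e. $(\int_B f_0^2\, d\mu)^{1/2} = 1$, as desired.
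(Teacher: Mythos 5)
Your proposal is correct and follows essentially the same route as the paper: everything rests on the duality identity $\sup_{x\in K}f(x)=(\int_B f^2\,d\mu)^{1/2}$, which the paper simply cites as (2.4) of \cite{kuelbs-strong-convergence} while you re-derive it from the reproducing property $f(x)=\langle Sf,x\rangle_\mu$ together with Cauchy--Schwarz. The remaining steps --- interchanging suprema to get $\Gamma=\sigma(\mu)$, sandwiching to get $(\int_B f_0^2\,d\mu)^{1/2}=\Gamma$, and homogeneity under $x\mapsto x/\Gamma$ for the $\tilde q$-statements --- match the paper's argument.
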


\begin{secrem} 
Since $K$ is compact in $B$, the Hahn-Banach theorem gives us $f_0 \in B^*$ and $ x_0 \in K$ with $\|x_0\|=\Gamma, f_0(x_0)= \Gamma,$ and $\|f_0\|^*=1$. 
For all such choices of $f_0$ and $x_0$ the point of the lemma is that one then always has
\begin{align*}
(\int_B f_{0}^{2} (x) d\mu(x))^{1/2}=\Gamma. 
\end{align*} 
Moreover, in view of (\ref{Gamma q tilde normalization}) and (\ref{f0 normalization}) there is no loss of generality in assuming that the norm $\|\cdot\|$ on $B$ is such that 
\begin{align}\label{Gamma=1}
\Gamma= \sup_{x \in K} \|x\|=1.
\end{align}
\end{secrem}

\begin{proof}[Proof of Lemma \ref{link gamma sigma}] For each $f \in B^*$, (2.4) in \cite{kuelbs-strong-convergence} implies
\begin{align*}
\sup_{x \in K} f(x) =(\int_B f^2(y)d\mu(y))^{\frac{1}{2}}.
\end{align*}
Taking the sup over all $ f\in B^*$ with $||f||^* =1$, and interchanging sups on the left term we immediately have (\ref{gamma=sigma}). Assuming the conditions on $f_0 \in B^*$ and $x_0 \in K$ for (\ref{int f_{0}^{2}}) we also have
\begin{align*}
(\int_{B} f_0^2(y)d\mu(y))^{\frac{1}{2}} = \sup_{x \in K} f_0(x) \le \sup_{x \in K}||x|| = \Gamma~{\rm{and}}~\sup_{x \in K} f_0(x) \ge f_0(x_0) =\Gamma.
\end{align*}
Thus (\ref{int f_{0}^{2}}) holds, and when combined with (\ref{gamma=sigma}) the remainder of the lemma in (\ref{Gamma q tilde normalization}) and (\ref{f0 normalization}) also holds.
\end{proof}

\begin{secthm}\label{main with lim} Let $\mu, \mu_1,\mu_2,\cdots$ be  centered non-degenerate Gaussian measures on $B$ with norm $\|\cdot\|$, and $X,X_1,X_2,\cdots$ be $B$-valued random vectors on some probability space with distributions $\mu, \mu_1,\mu_2,\cdots$. In addition, assume
\begin{align}
\Gamma= \sup_{x\in K} \|x\|~{\rm{and}}~\Gamma_n= \sup_{x\in K_n} \|x\|, n \ge1,
\end{align}
where $K,K_1,K_2,\cdots$ are the unit balls of the Hilbert spaces $H_{\mu}, H_{\mu_1}, \cdots$ that generate the Gaussian measures $\mu, \mu_1,\mu_2,\cdots$, and for $n \ge1$ that
\begin{align}
\tilde{M}_n=: \max_{1 \le k \le n}\frac{\|X_k\|}{\Gamma_k}~{\rm{and}~} M_n=: \max_{1 \le k \le n}\frac{\|X_k\|}{\Gamma}. 
\end{align}
If $\{\mu_n: n \ge 1\}$ is assumed to converge weakly to $\mu$ in $B$, then
\begin{align}\label{gamma_{n}to gamma}
\lim_{n \rightarrow \infty} \Gamma_n=\Gamma>0,
\end{align}
and 
with probability one 
\begin{align}\label{limsup ge 0}
\limsup_{n \rightarrow \infty}[\tilde{M}_n - \sqrt{2Ln}] \le 0. 
\end{align}
Moreover, if 
\begin{align}\label{gamma ratio}
\frac{\Gamma_k}{\Gamma}-1=o((\sqrt{Lk})^{-1}), 
\end{align}
then with probability one
\begin{align}\label{gamma ratio le 0}
\limsup_{n \rightarrow \infty} [M_n - \sqrt{2Ln}] \le 0. 
\end{align}
In addition, if $f_n \in B^*,~ x_n \in K_n$ with $\|x_n\|= \Gamma_n, \|x_n\|_{\mu_n}=1$, $f_n(x_n)=\Gamma_n $, and $\|f_n\|^{*}=1$, then
\begin{align*}
\sigma_{f_n}^2 =: \int_B f_n^2(x)d\mu_n(x)=\Gamma_n^{2},
\end{align*}
and if
\begin{align}\label{liminf max fk ge 0}
\liminf_{n \rightarrow \infty} [\max\{f_1(\frac{X_1}{\Gamma_1}),\cdots,f_n(\frac{X_n}{\Gamma_n})\} - \sqrt{2Ln}] \ge 0 
\end{align}
with probability one, we also have
\begin{align}\label{liminf Mtilde ge 0}
\liminf_{n \rightarrow \infty} [\tilde{M}_n- \sqrt{2Ln}] \ge 0 
\end{align}
with probability one and
\begin{align}\label{liminf ge 0}
\liminf_{n \rightarrow \infty} [M_n - \sqrt{2Ln}] \ge 0.
\end{align}
with probability one whenever (\ref{gamma ratio}) is assumed.
\end{secthm}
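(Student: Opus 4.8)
I would establish the assertions in turn, the $\limsup$ in \eqref{limsup ge 0} being the crux.

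\emph{Convergence $\Gamma_n\to\Gamma>0$.} By Lemma~\ref{link gamma sigma}, $\Gamma_n=\sigma(\mu_n)$ and $\Gamma=\sigma(\mu)$, so it suffices to prove $\sigma(\mu_n)\to\sigma(\mu)$. For fixed $f\in B^{*}$ with $\|f\|^{*}\le 1$, the law of $f(X_n)$ is $N(0,\int_B f^{2}d\mu_n)$ and converges weakly to that of $f(X)$, whence $\int_B f^{2}d\mu_n\to\int_B f^{2}d\mu$; taking the supremum over such $f$ gives $\liminf_n\sigma(\mu_n)\ge\sigma(\mu)$, which is positive by nondegeneracy. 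For the reverse inequality I would argue by contradiction: if $\sigma(\mu_{n_k})\to L>\sigma(\mu)$ along a subsequence, choose $f_{n_k}$ in the unit ball of $B^{*}$ with $\int_B f_{n_k}^{2}d\mu_{n_k}$ close to $\sigma(\mu_{n_k})^{2}$; since $B$ is separable the dual ball is weak-$*$ sequentially compact, so after a further subsequence $f_{n_k}\to f_0$ weak-$*$ with $\|f_0\|^{*}\le 1$, and, the $f_{n_k}$ being $1$-Lipschitz, this convergence is uniform on compact sets. Combining this with $\mu_{n_k}\Rightarrow\mu$ and the uniform integrability of $\|x\|^{2}$ for a weakly convergent sequence of Gaussian measures (a consequence of Fernique's theorem) yields $\int_B f_{n_k}^{2}d\mu_{n_k}\to\int_B f_0^{2}d\mu\le\sigma(\mu)^{2}$, contradicting $L^{2}>\sigma(\mu)^{2}$.

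\emph{The bound \eqref{limsup ge 0}.} The engine is a sharp Gaussian tail estimate used with the \emph{exact} normalization $\Gamma_k=\sigma(\mu_k)$: there are $k_0$, $C$, $N$ with
\[
\Pr\big(\|X_k\|>u\Gamma_k\big)\le C\,u^{N}e^{-u^{2}/2}\qquad(k\ge k_0,\ u\ge 1),
\]
which would follow from the finiteness of $\int_B\exp(\lambda\|x\|^{2})\,d\mu_k(x)$ for $\lambda<(2\Gamma_k^{2})^{-1}$ (Fernique) together with the compactness of $K_k$ — which limits the subexponential correction to $e^{-u^{2}/2}$ to a polynomial — the constants being uniform for $k\ge k_0$ because $\mu_k\Rightarrow\mu$; dividing by $\Gamma_k$ rather than $\Gamma$ is precisely what makes the exponent the sharp $u^{2}/2$. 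Fixing $\varepsilon>0$ and using the dyadic blocks $[2^{j},2^{j+1})$: for $n$ in such a block, $\tilde M_n-\sqrt{2Ln}\le\tilde M_{2^{j+1}}-\sqrt{2L2^{j}}$ and $\sqrt{2L2^{j+1}}-\sqrt{2L2^{j}}\to 0$; the finitely many indices $k<k_0$ do not affect the $\limsup$; and the tail estimate gives $\sum_j 2^{j+1}C(\sqrt{2L2^{j+1}}+\varepsilon)^{N}e^{-(\sqrt{2L2^{j+1}}+\varepsilon)^{2}/2}<\infty$ (the factor $2^{j+1}e^{-L2^{j+1}}=1$ cancels, leaving a polynomial in $j$ times $e^{-c\sqrt{j}}$). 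By Borel--Cantelli, $\tilde M_{2^{j+1}}\le\sqrt{2L2^{j+1}}+\varepsilon$ eventually, so $\limsup_n[\tilde M_n-\sqrt{2Ln}]\le\varepsilon$; letting $\varepsilon\downarrow 0$ proves \eqref{limsup ge 0}.

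\emph{The remaining assertions.} The identity $\sigma_{f_n}^{2}=\Gamma_n^{2}$ is just the conclusion $(\int_B f_0^{2}d\mu)^{1/2}=\Gamma$ of Lemma~\ref{link gamma sigma} applied to $\mu_n$, since $f_n,x_n$ satisfy exactly its hypotheses. For \eqref{gamma ratio le 0}, write $M_n=\max_{k\le n}(1+\delta_k)\|X_k\|/\Gamma_k$ with $\delta_k=\Gamma_k/\Gamma-1=o((\sqrt{Lk})^{-1})$ by \eqref{gamma ratio}; by \eqref{limsup ge 0}, almost surely $\|X_k\|/\Gamma_k\le\sqrt{2Lk}+\varepsilon$ for all large $k$, so for such $k\le n$, $(1+|\delta_k|)\|X_k\|/\Gamma_k\le\sqrt{2Lk}+\varepsilon+|\delta_k|\sqrt{2Lk}+o(1)\le\sqrt{2Ln}+\varepsilon+o(1)$ since $|\delta_k|\sqrt{Lk}\to0$, while the finitely many small $k$ are negligible; letting $\varepsilon\downarrow 0$ gives \eqref{gamma ratio le 0}. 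For the lower bounds, $\|f_k\|^{*}=1$ forces $f_k(X_k/\Gamma_k)\le\|X_k\|/\Gamma_k$, hence $\tilde M_n\ge\max_{k\le n}f_k(X_k/\Gamma_k)$ and \eqref{liminf Mtilde ge 0} is immediate from \eqref{liminf max fk ge 0}; finally \eqref{liminf ge 0} follows from \eqref{liminf Mtilde ge 0} by the same perturbation, using that the maximizing index $k_n^{*}$ satisfies $Lk_n^{*}\sim Ln$ — because $\|X_{k_n^{*}}\|/\Gamma_{k_n^{*}}\ge\sqrt{2Ln}-\varepsilon$ while $\|X_k\|/\Gamma_k\le\sqrt{2Lk}+\varepsilon$ — so that $|\Gamma_{k_n^{*}}/\Gamma-1|\,\tilde M_n\to0$.

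\emph{Main obstacle.} The one genuinely delicate point is the sharp, uniform tail estimate behind \eqref{limsup ge 0}. The Borell--Tsirelson--Ibragimov--Sudakov inequality only yields $\Pr(\|X_k\|>u\Gamma_k)\le\exp(-\tfrac12 u^{2}+\beta u)$ with $\beta=\sup_k\E\|X_k\|/\Gamma_k$ a fixed positive constant, and with that one would obtain merely $\limsup_n[\tilde M_n-\sqrt{2Ln}]\le\beta$; removing this spurious linear term — i.e.\ showing the correction to $e^{-u^{2}/2}$ is at most polynomial and that the relevant constants are uniform along $\mu_k\Rightarrow\mu$ — is where the compactness of $K$ (equivalently $\int_B\|x\|^{2}d\mu<\infty$) and the tools from the $\log$-law literature for Banach-valued Gaussian vectors must be brought to bear.
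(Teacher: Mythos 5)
Your argument for (\ref{limsup ge 0}) — the crux of the theorem — rests on a uniform sharp tail estimate $\Pr(\|X_k\|>u\Gamma_k)\le C\,u^{N}e^{-u^{2}/2}$ that you assert but do not prove, and your suggested derivation does not deliver it. Fernique's theorem gives only $\int_B e^{\lambda\|x\|^2}d\mu_k<\infty$ for $\lambda<(2\Gamma_k^2)^{-1}$, hence $\Pr(\|X_k\|>u\Gamma_k)\le C_\delta e^{-(1-\delta)u^2/2}$, and Borell's inequality gives $e^{-u^2/2+\beta u}$ with $\beta>0$; as you yourself note, neither suffices, since the dyadic Borel--Cantelli sum then diverges (one needs the correction to $e^{-u^{2}/2}$ to be $e^{o(u)}$, uniformly in $k$). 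The claim that ``compactness of $K_k$ limits the subexponential correction to a polynomial'' is precisely the hard analytic content here, and it is not a routine consequence of compactness; it is the substance of Theorem 2.1 of \cite{goodman-normal} and of Theorem 1 of \cite{goodman-kuelbs-weakly}, the latter of which also supplies the uniformity in $k$ along a weakly convergent sequence $\mu_k\Rightarrow\mu$. The paper's proof simply invokes (2.3) of that theorem, which asserts $X_k/\sqrt{2Lk}\in K_k+(\epsilon/\sqrt{2Lk})U$ eventually a.s., and reads off $\|X_k\|/\Gamma_k\le\sqrt{2Lk}+\epsilon$ directly. So your route is viable in outline but has a genuine unclosed gap exactly at its load-bearing step; you acknowledge this in your final paragraph, but acknowledging it does not close it.

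The remainder is essentially correct. For (\ref{gamma_{n}to gamma}) the paper again cites Theorem 1 of \cite{goodman-kuelbs-weakly}, whereas you sketch a self-contained proof via $\sigma(\mu_n)\to\sigma(\mu)$ using weak-$*$ compactness of the dual ball and uniform integrability of $\|x\|^2$; that argument is sound (the uniform integrability does follow from a uniform Fernique bound for a tight family of Gaussian measures), though it amounts to reproving part of the cited result. Your identification $\sigma_{f_n}^2=\Gamma_n^2$ via Lemma \ref{link gamma sigma}, the perturbation argument deducing (\ref{gamma ratio le 0}) from the eventual bound $\|X_k\|/\Gamma_k\le\sqrt{2Lk}+\epsilon$ together with (\ref{gamma ratio}), and the deduction of (\ref{liminf Mtilde ge 0}) from (\ref{liminf max fk ge 0}) via $f_k(X_k/\Gamma_k)\le\|X_k\|/\Gamma_k$ all match the paper. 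For (\ref{liminf ge 0}) you perturb $\tilde M_n$ at its maximizing index and show $Lk_n^{*}\sim Ln$, while the paper perturbs $\max_k|f_k(X_k)|/\Gamma_k$ using the a.s. bound $|f_k(X_k)|/\Gamma_k\le 2\sqrt{2Lk}$; both bookkeepings work, conditional on the limsup bound whose proof is the gap above.
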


An immediate corollary of Theorem \ref{main with lim} is the following.

\begin{seccor}\label{equal dist le 0} Let $\mu$ be a centered Gaussian measure on $B$ with norm $\|\cdot\|$, and assume $X_1,X_2,\cdots$ are defined on some probability space with each having distribution $\mu$. If $\Gamma=1$ as in (\ref{Gamma=1}), and
\begin{align*}
M_n= \max\{\|X_1\|,\cdots,\|X_n\|\}, n \ge 1,
\end{align*}
then with probability one
\begin{align}\label{limsup le 0 ident dist}
\limsup_{n \rightarrow \infty} [M_n - \sqrt{2Ln}] \le 0.
\end{align}
In addition, if $f_0 \in B^*,~ x_0 \in K$ with $\|x_0\|= \|x_0\|_{\mu}=1$, $f_0(x_0)=1$, and $\|f_0\|^{*}=1$, then
\begin{align*}
\sigma_{f_0}^2 =: \int_B f_0^2(x)d\mu(x)=1,
\end{align*}
and if
\begin{align}\label{liminf max f0 ge 0}
\liminf_{n \rightarrow \infty} [\max\{f_0(X_1),\cdots,f_0(X_n)\} - \sqrt{2Ln}] \ge 0 
\end{align}
with probability one, we also have
\begin{align}
\liminf_{n \rightarrow \infty} [M_n- \sqrt{2Ln}] \ge 0 
\end{align}
with probability one.
\end{seccor}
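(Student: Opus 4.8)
The plan is to apply Theorem \ref{main with lim} with $\mu_n = \mu$ for every $n \ge 1$. With this choice the generating Hilbert spaces coincide, $H_{\mu_n} = H_\mu$, hence $K_n = K$ and $\Gamma_n = \Gamma$ for all $n$; in particular the hypothesis $\Gamma = 1$ of the corollary forces $\mu$ to be non-degenerate, the sequence $\{\mu_n\}$ converges weakly to $\mu$ trivially, and the ratio condition (\ref{gamma ratio}) holds since $\Gamma_k/\Gamma - 1 \equiv 0 = o((\sqrt{Lk})^{-1})$. Moreover, because $\Gamma_k = \Gamma = 1$ we have $\tilde M_n = \max_{1 \le k \le n}\|X_k\|/\Gamma_k = \max_{1 \le k \le n}\|X_k\| = M_n$, so the conclusion (\ref{gamma ratio le 0}) of Theorem \ref{main with lim} is exactly the assertion (\ref{limsup le 0 ident dist}).

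For the lower bound, I would take $f_n = f_0$ and $x_n = x_0$ for all $n$, with $f_0 \in B^*$, $x_0 \in K$ as in the statement of the corollary. Since $\Gamma_n = 1$, the requirements $\|x_0\| = \|x_0\|_\mu = 1$, $f_0(x_0) = 1$, $\|f_0\|^* = 1$ are precisely the requirements imposed on $f_n, x_n$ in Theorem \ref{main with lim}, so $\sigma_{f_0}^2 = \int_B f_0^2\,d\mu = \Gamma^2 = 1$ (this identity is also a direct instance of (\ref{int f_{0}^{2}}) in Lemma \ref{link gamma sigma}). With these identifications the hypothesis (\ref{liminf max f0 ge 0}) is identical to hypothesis (\ref{liminf max fk ge 0}), so Theorem \ref{main with lim} yields (\ref{liminf Mtilde ge 0}), i.e. $\liminf_{n}[\tilde M_n - \sqrt{2Ln}] \ge 0$, which by $\tilde M_n = M_n$ is the desired $\liminf_{n}[M_n - \sqrt{2Ln}] \ge 0$; alternatively (\ref{liminf ge 0}) gives the same conclusion directly, since (\ref{gamma ratio}) holds.

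There is essentially no obstacle here: every hypothesis of Theorem \ref{main with lim} either becomes vacuous in the i.i.d. case with $\Gamma = 1$ or coincides with a hypothesis explicitly assumed in the corollary, and the only things needing a word are the bookkeeping identifications $K_n = K$, $\Gamma_n = \Gamma$, $\tilde M_n = M_n$, and the matching of the conditions on $(f_0,x_0)$ with those on $(f_n,x_n)$. All of the real work — the tail estimate plus Borel--Cantelli behind (\ref{limsup ge 0}) and the way the liminf is transferred from the one-dimensional projections $f_0(X_k)$ — is carried out in the proof of Theorem \ref{main with lim} itself.
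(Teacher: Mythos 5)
Your proposal is correct and follows essentially the same route as the paper: the paper also obtains the corollary by specializing Theorem \ref{main with lim} to the constant sequence $\mu_n=\mu$, using $\Gamma_n=\Gamma=1$ so that $\tilde M_n=M_n$, and taking $f_n=f_0$, $x_n=x_0$ so that (\ref{liminf max f0 ge 0}) becomes an instance of (\ref{liminf max fk ge 0}). Your observations that weak convergence and the ratio condition (\ref{gamma ratio}) become vacuous, and that $\Gamma=1$ forces non-degeneracy, are exactly the bookkeeping the paper relies on implicitly.
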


\begin{secrem}\label{individual gaussian} In Theorem \ref{main with lim} and Corollary \ref{equal dist le 0} it is not assumed the random vectors $\{X_n: n \ge 1\}$ are jointly Gaussian, only that each is Gaussian.
The limsup results of (\ref{limsup ge 0}) and (\ref{limsup le 0 ident dist}) are obtained using a Borel-Cantelli argument that is  based on rates of convergence results for clustering and convergence of $X_n$ to the set $K_n$ obtained in \cite{goodman-kuelbs-weakly}. Combined with (\ref{gamma ratio}) this sort of argument also yields (\ref{gamma ratio le 0}). For real-valued $\{X_n: n \ge 1\}$ this is fairly simple, but in the Banach space setting it is far less so. In contrast, the liminf results in the real-valued case are considerably more complex, but here they follow easily using Lemma \ref{link gamma sigma} and the assumptions (\ref{liminf max fk ge 0}) and (\ref{liminf max f0 ge 0}), which are likely to be hard (maybe even impossible) to verify in many settings. Situations where they can be simplified through a combination of Lemma \ref{link gamma sigma} and \cite{pickands} appear in Corollary \ref{limsup le 0 f0 stat}. Of course, using Lemma \ref{link gamma sigma} the liminf results for for i.i.d. sequences can also be done directly.
\end{secrem} 

The stationary case in Corollary \ref{limsup le 0 f0 stat} also appears in connection with results for the vector-valued Ornstein-Uhlenbeck process presented in the following sections. First we need a couple of definitions.

\begin{secdefn} A sequence of $B$-valued random vectors $ \{X_n: n \ge 1\}$ is stationary if for all integers $r\geq 1, h \ge 1$ the finite dimensional distributions of
\begin{align}
(X_1,\cdots,X_r)~{\rm{and}}~(X_{1+h},\cdots,X_{r+h})~{\rm{on}}~B^{r}
\end{align}
are equal.
\end{secdefn}

\begin{secdefn} A sequence of $B$-valued random vectors $ \{X_n: n \ge 1\}$ is a mean zero Gaussian sequence if for all integers $d\geq 1$ the finite dimensional distribution of
\begin{align}
(X_1,\cdots, X_d) 
\end{align}
is a mean zero Gaussian measure on $B^d$. 
\end{secdefn}

\begin{seccor}\label{limsup le 0 f0 stat} Let $\mu$ be a centered Gaussian measure on $B$ with norm $\|\cdot\|$, and assume $\{X_n:n\ge 1\}$ is a sequence of random vectors on some probability space with $\mathcal{L}(X_n)=\mu$ for $n \ge 1$. If $\Gamma=1$ as in (\ref{Gamma=1}), and
\begin{align*}
M_n= \max\{\|X_1\|,\cdots,\|X_n\|\}, n \ge 1,
\end{align*}
then with probability one
\begin{align}\label{limsup le 0 conclusion}
\limsup_{n \rightarrow \infty} [M_n - \sqrt{2Ln}] \le 0. 
\end{align}
Furthermore, if for some $f_0 \in B^*$ such that  $\|f_0\|^{*}=1$  the sequence $\{f_0(X_n): n \ge 1\}$ is a stationary mean zero Gaussian sequence with
\begin{align*}
\sigma_{f_0}^2 =: \int_B f_0^2(x)d\mu(x)=1,
\end{align*}
and 
\begin{align}\label{normalize f0}
\lim_{n \rightarrow \infty} ( \log_{e}n)E[f_0(X_1)f_0(X_n)] = 0, 
\end{align}
then
\begin{align}\label{liminf ge 0 f0 stat}
\liminf_{n \rightarrow \infty} [M_n- \sqrt{2Ln}] \ge 0 
\end{align}
with probability one. In particular, if $\{X_n:n\ge 1\}$ are i.i.d. with $\Gamma=1$, then with probability one
\begin{align}\label{lim Mn}
\lim_{n \rightarrow \infty} [M_n- \sqrt{2Ln}] = 0.
\end{align} 
\end{seccor}

\begin{secrem} If we assume $\{X_n: n \ge 1\}$ is a centered stationary Gaussian sequence in Corollary \ref{limsup le 0 f0 stat}, then $\{f_0(X_n): n \ge 1\}$ is  a mean zero stationary real-valued Gaussian sequence for all $f_0 \in B^*$. Therefore, if we also have $\|f_0\|^*=1$ with $f_0(x_0)=1$ for some $x_0 \in K$, then Lemma \ref{link gamma sigma} implies $\sigma^2(f_0)=1$, and  (\ref{normalize f0}) then implies (\ref{liminf ge 0 f0 stat}).
\end{secrem}

\begin{proof}[Proof of Theorem \ref{main with lim} and its Corollaries] Since the centered Gaussian measures  $\{\mu_k: k\ge 1\}$ are non-degenerate and converge weakly to the non-degenerate measure $\mu$ on $B$, (2.2) in Theorem 1 of \cite{goodman-kuelbs-weakly} implies (\ref{gamma_{n}to gamma}). Furthermore, for $\epsilon>0$ and $\epsilon_k= \epsilon/\sqrt{2Lk}$, under the assumptions of Theorem \ref{main with lim}, (2.3) in Theorem 1 of \cite{goodman-kuelbs-weakly} implies that with probability one
\begin{align}
\frac{X_k(\omega)}{\sqrt{2Lk}} \in K_k + \epsilon_k\, U
\end{align}
for all $k \ge k_0(\omega,\epsilon)$. This implies
\begin{align}\label{le Gammak +}
\frac{\|X_k(\omega)\|}{\sqrt{2Lk}} \le \Gamma_k + \epsilon_k, ~{\rm{and ~hence ~that}}~ \frac{\|X_k(\omega)\|}{\Gamma_k} \le \sqrt{2Lk} + \epsilon 
\end{align}
for all $k \ge k_0=:k_0(\omega,\epsilon)$ with probability one. Since
\begin{align*}
\tilde{M}_n(\omega)= \max_{1 \le k \le n} \frac{\|X_k(\omega)\|}{\Gamma_k},
\end{align*}
(\ref{limsup ge 0}) is immediate for all $\omega$ such that $\sup_{n \ge 1}\tilde{M}_n(\omega)< \infty$. If \hfil\break
$\sup_{n \ge 1}\tilde{M}_n(\omega)= \infty$, then for $k_0=k_0(\omega,\epsilon)$ and all $n \ge n_0(\omega,\epsilon)$
\begin{align*}
\tilde{M}_n(\omega)= \max_{k_0 \le k \le n} \frac{\|X_k(\omega)\|}{\Gamma_k} \le \sqrt{2Ln} + \epsilon,
\end{align*}
and, since $\epsilon>0$ is arbitrary, for such $\omega$ we have (\ref{limsup ge 0}). Thus with probability one we have (\ref{limsup ge 0}), and the limsup results in Corollaries \ref{equal dist le 0} and \ref{limsup le 0 f0 stat}, namely (\ref{limsup le 0 ident dist}) and (\ref{limsup le 0 conclusion}), also hold.

Now we turn to the proof of the liminf results in (\ref{liminf Mtilde ge 0}), 
and the implications for the liminf results of Corollaries \ref{equal dist le 0} and \ref{limsup le 0 f0 stat}. Given the assumptions on $\{f_n: n \ge 1\}$ in Theorem \ref{main with lim}, Lemma \ref{link gamma sigma} implies $\sigma_{f_{n}}^{2}= \Gamma_{n}^{2}$, and with probability one
\begin{align*}
M_n(\omega) \ge \max_{1 \le k \le n} \frac{f_k(X_k(\omega))}{\Gamma_k}.
\end{align*}
That  (\ref{liminf Mtilde ge 0}) holds with probability one is now  immediate from (\ref{liminf max fk ge 0}). The reader will note we have not used the fact that we have $\sigma_n^2= \Gamma_n^2$, but we have included it since it is the normalization required to verify the analogue of assumption (\ref{liminf max fk ge 0}) in 
Corollary \ref{limsup le 0 f0 stat}. An entirely similar argument also gives the liminf result in (\ref{liminf max f0 ge 0}), so  Corollary \ref{equal dist le 0} is proven.

The liminf result in (\ref{liminf ge 0 f0 stat}) of Corollary \ref{limsup le 0 f0 stat} follows from Theorem 3.3 in \cite{pickands} since $\sigma_{f_0}^2 =1$, and (\ref{normalize f0}) is assumed hold. To verify (\ref{lim Mn}) observe that $\Gamma=1$, and Lemma \ref{link gamma sigma} implies there exists $f_0 \in B^*$ with $\|f_0\|^*=1, \sigma_{f_0}^2=1$, and such that $\{f_0(X_k): k \ge 1\}$ is a sequence of i.i.d Gaussian random variables with mean zero and variance one. Hence (\ref{normalize f0}) of 
Corollary \ref{limsup le 0 f0 stat} is trivial, and (\ref{liminf ge 0 f0 stat}) implies the liminf result for (\ref{lim Mn}). Since the limsup result follows from (\ref{limsup le 0 conclusion}), this proves Corollary \ref{limsup le 0 f0 stat}.

What remains in the proof of Theorem \ref{main with lim} is to verify the limsup in (\ref{gamma ratio le 0}) and the liminf in (\ref{liminf ge 0}) hold with probability one when (\ref{gamma ratio}) is assumed. For $\epsilon>0$ and $\epsilon_k= \epsilon/\sqrt{2Lk}$, from (\ref{le Gammak +}) we have for all  $\omega$ in a set of probability one and $k \ge k_0(\omega,\epsilon)$ that 
 \begin{align*}
\frac{\|X_k(\cdot,\omega)\|}{\sqrt{2Lk}} \le \Gamma +(\Gamma_k - \Gamma) + \epsilon_k, 
\end{align*}
which implies
\begin{align}\label{over Gamma le + epsilon over Gamma}
\|X_k(\cdot,\omega)\|/\Gamma \le \sqrt{2Lk} +(\frac{\Gamma_k}{\Gamma} -1) \sqrt{2Lk} +\frac{\epsilon}{\Gamma} \end{align}
for all $k \ge k_0(\omega,\epsilon)$ with probability one Since we are assuming (\ref{gamma ratio}), there exists non-random $k_1$ such that $k \ge k_1(\epsilon)$ implies
\begin{align*}
|\frac{\Gamma_k}{\Gamma} -1)| \sqrt{2Lk} \le \epsilon,
\end{align*}
and combining with (\ref{over Gamma le + epsilon over Gamma}) this implies 
\begin{align}
\|X_k(\cdot,\omega)\|/\Gamma \le \sqrt{2Lk} +\epsilon +\frac{\epsilon}{\Gamma} 
\end{align}
for all $k \ge k_2(\omega,\epsilon)=:\max\{k_0(\omega,\epsilon),k_1(\epsilon)\}$. Since  
\begin{align*}
\max_{1 \le k \le n}\frac{\|X_k(\cdot,\omega)\|}{\Gamma} 
\end{align*}
is increasing in $n$ and (\ref{gamma ratio le 0}) is trivial if it is bounded in $n$, we assume
\begin{align}\label{partial max = infinity}
\sup_{n\ge 1} \max_{1 \le k \le n}\frac{\|X_k(\cdot,\omega)\|}{\Gamma}= \infty, 
\end{align}
which implies
\begin{align}
 \limsup_{n \rightarrow \infty}[ \max_{1 \le k \le n} \frac{\|X_k(\cdot,\omega)\|}{\Gamma}- \sqrt{2Ln}] \\
=\limsup_{n \rightarrow \infty}[ \max_{k_2(\omega{\rd ,}\epsilon) \le k \le n} \frac{\|X_k(\cdot,\omega)\|}{\Gamma}- \sqrt{2Ln}] \le \epsilon(1 +1/\Gamma)\notag
\end{align} 
whenever (\ref{partial max = infinity}) holds. Since $\epsilon>0$ is arbitrary, when combined with the trivial case, this implies (\ref{gamma ratio le 0}) with probability one.

Now we turn to the final step in the proof, which is to verify (\ref{liminf ge 0}). Given the assumptions on the linear functionals $\{f_k\}$, we have 
\begin{align*}
 [ \max_{1 \le k \le n} \frac{\|X_k(\cdot,\omega)\|}{\Gamma}- \sqrt{2Ln}] \ge[ \max_{1 \le k \le n} \frac{|f_k(X_k(\cdot,\omega))|}{\Gamma}- \sqrt{2Ln}] ,
\end{align*}
so it suffices to show that the assumption (\ref{liminf max fk ge 0}) implies that 
\begin{align}
\liminf_{n \rightarrow \infty} [ \max_{1 \le k \le n} \frac{|f_k(X_k(\cdot, \omega))|}{\Gamma}- \sqrt{2Ln}] \ge 0 
\end{align}
with probability one. Since (\ref{liminf max fk ge 0}) implies
\begin{align*}
\sup_{n \ge 1} \max_{1 \le k \le n} \frac{|f_k(X_k(\cdot, \omega))|}{\Gamma_k}
\end{align*}
increases to infinity with probability one, for every $k_0 \ge 1$ we have for $n \ge n_0(\omega,k_0)$ that
\begin{align}\label{max=max}
 \max_{1 \le k \le n} \frac{|f_k(X_k(\cdot, \omega))|}{\Gamma_k} =  \max_{k_0 \le k \le n} \frac{|f_k(X_k(\cdot, \omega))|}{\Gamma_k} 
\end{align}
with probability one, and (\ref{liminf max fk ge 0}) implies
\begin{align}\label{liminf fk}
\liminf_{n \rightarrow \infty} [ \max_{k_o \le k \le n} \frac{|f_k(X_k(\cdot, \omega))|}{\Gamma_k}- \sqrt{2Ln}] \ge 0 
\end{align}
with probability one. Now for each $\epsilon>0$, (\ref{gamma ratio}) allows us to choose $k_1=:k_1(\epsilon) \ge 1$ such that $k\ge k_1$ implies
\begin{align*}
|\frac{\Gamma_k}{\Gamma} -1|\sqrt{2Lk}<\epsilon.
\end{align*}
In addition, note that
\begin{align*}
\frac{|f_k(X_k(\cdot, \omega))|}{\sqrt{2Lk}\, \Gamma_k} \le 2
\end{align*}
for all $k \ge k_2(\omega)$ with probability one,  and
\begin{align*}
 \frac{|f_k(X_k(\cdot, \omega))|}{\Gamma}= \frac{|f_k(X_k(\cdot, \omega))|}{\Gamma_k}\frac{\Gamma_k}{\Gamma}\\= \frac{|f_k(X_k(\cdot, \omega))|}{\Gamma_k}+  \frac{|f_k(X_k(\cdot, \omega))|}{\Gamma_k}(\frac{\Gamma_k}{\Gamma}-1).
 \end{align*}
Therefore, for $k \ge k_0=k_0(\omega,\epsilon) \ge \max\{k_1(\epsilon), k_2(\omega)\}$
\begin{align*}
 \frac{|f_k(X_k(\cdot, \omega))|}{\Gamma}\ge \frac{|f_k(X_k(\cdot, \omega))|}{\Gamma_k}-2\epsilon, 
\end{align*}
so for $n \ge n_0(\omega,k_0(\omega,\epsilon))$ we have
\begin{align}\label{over Gamma over Gammak}
 \max_{1 \le k \le n} \frac{|f_k(X_k(\cdot, \omega))|}{\Gamma} &\ge \max_{k_0 \le k \le n} \frac{|f_k(X_k(\cdot, \omega))|}{\Gamma} \\
 &\ge \max_{k_0 \le k \le n} \frac{|f_k(X_k(\cdot, \omega))|}{\Gamma_k} -2\epsilon\notag \\
&=\max_{1 \le k \le n} \frac{|f_k(X_k(\cdot, \omega))|}{\Gamma_k} -2\epsilon,  \notag
\end{align}
where the equality follows from (\ref{max=max}) and our choice of $n$. Therefore, (\ref{liminf fk}) and (\ref{over Gamma over Gammak}) imply
\begin{align}
\liminf_{n \rightarrow \infty} [ \max_{1 \le k \le n} \frac{|f_k(X_k(\cdot, \omega))|}{\Gamma}- \sqrt{2Ln}] \ge \\
  \liminf_{n \rightarrow \infty} [ \max_{1 \le k \le n} \frac{|f_k(X_k(\cdot, \omega))|}{\Gamma_k}- \sqrt{2Ln}] - 2\epsilon \ge -2\epsilon.\notag
\end{align}
with probability one. Since $\epsilon>0$ was arbitrary this implies (\ref{liminf ge 0}), and Theorem \ref{main with lim} is proved.
\end{proof}

\section{Strong Laws for Maxima of Continuous Time Processes}\label{strong law continuous} 

Applying Theorem \ref{main with lim} and its corollaries we obtain generalizations to continuous time vector-valued stochastic processes. In particular, Corollary \ref{limsup le 0 f0 stat} allows us to provide some generalizations of results for real-valued stationary Gaussian processes that appeared in \cite{pickands}, and the references therein. In the real-valued case the proofs of the continuous time results are more complex, so it is somewhat of a surprise that the sequence results in section two make at least part of the argument easier even for Banach-valued sample continuous Gaussian processes (see Remark \ref{individual gaussian} for further clarification and details). Now we need some additional notation.

Throughout this section $E$ has norm $q(\cdot)$ and $B=C_E[0,1]$ denotes the space of $E$-valued continuous functions on $[0,1]$ with norm 
\begin{align}\label{norm C_{E}}
\|x\|= \sup_{t \in [0,1]} q(x(t)), x\in C_E[0,1].
\end{align}

 Let $Y=: \{Y(t): t \ge 0\}$ denote a centered, sample continuous process with values in $(E,q(\cdot))$, and 
for each integer $k\ge 1$ define the processes
\begin{align}\label{X_{k}}
X_k(t)=:Y(t+(k-1)), t \in [0,1]. 
\end{align}
Then, $\{X_k(t): t \in [0,1]\}$ is a sample continuous process, and its distribution is a centered measure $\mu_k$ on $B=C_E[0,1]$ with norm as in (\ref{norm C_{E}}).

\begin{secdefn}\label{def gaussian} A stochastic process $Z=: \{Z(t): t \in T\}$ is said to be an $E$-valued mean zero Gaussian process if for each integer $d\geq 1$ and finite subset 
$\{ t_1,t_2,\cdots ,t_d\}$ of $T$ the finite dimensional distribution of
\begin{align}
(Z(t_1),\cdots,Z(t_d)) 
\end{align}
is a mean zero Gaussian measure on $E^d$. 
\end{secdefn}

If $Y=\{Y(t): t \ge 0\}$ is assumed to be a mean zero Gaussian process as in Definition \ref{def gaussian}, then the finite dimensional distributions of each of the processes $X_k$ are mean zero Gaussian. Since $B$ is separable in the sup-norm given in (\ref{norm C_{E}}), the Borel probability measures $\mu_k = \mathcal{L}(X_k)$ all have the same mean zero Gaussian finite dimensional distributions, but are the $\mu_k$ mean zero Gaussian measures on $B$? Recall that a measure $\mu $ is a mean zero Gaussian measure on a separable Banach space $B$ if every linear functional $f \in B^*$ is a mean zero Gaussian random variable with variance 
\begin{align}\label{var f}
\int_B f^2(x) d\mu(x). 
\end{align} 
The next lemma shows this is indeed the case.

\begin{seclem}\label{f(X) gaussian} If $X=:\{X(t): t \in [0,1]\}$ is an $E$-valued, mean zero, sample continuous Gaussian process per Definition \ref{def gaussian}, then $\mu = \mathcal{L}(X)$ is a Gaussian measure on the Borel subsets of $C_E[0,1]$. That is, for every $f \in C_E^{*}[0,1]$, $f(X)$ is a mean zero Gaussian random variable with variance as in (\ref{var f}) with $B=C_E[0,1]$.
\end{seclem}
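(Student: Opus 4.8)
The plan is to show that an arbitrary $f \in C_E^*[0,1]$, when composed with the sample-continuous Gaussian process $X$, gives a real mean zero Gaussian random variable; the key is to approximate $f(X)$ by linear functionals that depend on only finitely many coordinates $X(t_1),\dots,X(t_d)$, for which the Gaussian property is an immediate consequence of Definition \ref{def gaussian} together with the fact that a bounded linear functional of a finite-dimensional Gaussian vector is real Gaussian. First I would recall the structure of $C_E^*[0,1]$: by a vector-valued Riesz-type representation, a continuous linear functional on $C_E[0,1]$ is given by integration against an $E^*$-valued measure of bounded variation on $[0,1]$, but rather than invoke the full representation theorem I would prefer the softer route of approximating $f$ weak-$*$ by functionals supported on finite sets of times. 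Concretely, for a partition $0 = t_0^{(m)} < t_1^{(m)} < \cdots < t_m^{(m)} = 1$ with mesh tending to $0$, define the piecewise-linear interpolation operator $\pi_m : C_E[0,1] \to C_E[0,1]$ sending $x$ to the function agreeing with $x$ at the $t_j^{(m)}$ and linear in between. Each $\pi_m$ is a bounded linear operator with $\|\pi_m x\| \le \|x\|$, and by uniform continuity of each sample path, $\pi_m x \to x$ in the sup-norm for every $x \in C_E[0,1]$; hence $f(\pi_m x) \to f(x)$ pointwise on $B$.

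Next I would observe that $f \circ \pi_m$ is a bounded linear functional on $C_E[0,1]$ that factors through the evaluation map $x \mapsto (x(t_0^{(m)}), \dots, x(t_m^{(m)})) \in E^{m+1}$: indeed $\pi_m x$ is a fixed continuous-piecewise-linear function determined linearly by those $m+1$ values, so $f(\pi_m x) = g_m(x(t_0^{(m)}), \dots, x(t_m^{(m)}))$ for some $g_m \in (E^{m+1})^*$. By Definition \ref{def gaussian}, $(X(t_0^{(m)}), \dots, X(t_m^{(m)}))$ is a mean zero Gaussian vector in $E^{m+1}$, and since a continuous linear functional applied to a Gaussian measure on $E^{m+1}$ is a real mean zero Gaussian random variable (this is just the one-dimensional projection property built into the definition of a Gaussian measure on a Banach space, applied to $E^{m+1}$), we conclude $f(\pi_m X)$ is real mean zero Gaussian for each $m$. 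Then $f(\pi_m X) \to f(X)$ almost surely, hence in distribution; and a pointwise (in $\omega$, or in distribution) limit of mean zero real Gaussian random variables is either mean zero Gaussian or, a priori, could fail to have a limiting variance. So I would add one more step: show $\sup_m \E[f(\pi_m X)^2] < \infty$, which follows because $|f(\pi_m X)| \le \|f\|^* \|\pi_m X\| \le \|f\|^* \|X\|$ and $\E\|X\|^2 < \infty$ by (\ref{gaussian variance}) (valid since $\mu = \mathcal{L}(X)$ is a measure on the separable Banach space $B$ for which we already know this $L^2$ bound, or more elementarily since $\|X\| = \sup_t q(X(t))$ has a Gaussian tail by Borell's inequality once sample continuity and measurability of $\|X\|$ are in hand). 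With a uniform second-moment bound, the $L^2$-bounded martingale-like (or just uniformly integrable) sequence $f(\pi_m X)$ converges in $L^2$, so the limit $f(X)$ is mean zero Gaussian with variance $\lim_m \E[f(\pi_m X)^2]$; and by dominated convergence this equals $\E[f(X)^2] = \int_B f^2 \, d\mu$, which is (\ref{var f}).

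The main obstacle I anticipate is purely the bookkeeping around measurability and the justification that $f \circ \pi_m$ genuinely depends only on finitely many evaluations — i.e. constructing $\pi_m$ cleanly as a bounded linear operator into $C_E[0,1]$ and checking $\pi_m x \to x$ uniformly using that $[0,1]$ is compact and each path is (uniformly) continuous. Everything else is soft: the finite-dimensional Gaussianity is handed to us by Definition \ref{def gaussian}, the one-dimensional reduction is the definition of a Gaussian measure on $E^{m+1}$, and the passage to the limit is controlled by the $L^2$ bound coming from $\E\|X\|^2 < \infty$. One should also confirm $f(X)$ is genuinely measurable, which it is because $X$ is sample continuous (hence Borel measurable into the separable space $C_E[0,1]$) and $f$ is continuous. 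I would present the argument in that order: (1) define the interpolation operators and prove uniform path approximation; (2) identify $f \circ \pi_m$ with a functional of finitely many coordinates and deduce $f(\pi_m X)$ is real Gaussian mean zero; (3) bound second moments uniformly via $\E\|X\|^2 < \infty$ and pass to the $L^2$ limit to conclude $f(X)$ is mean zero Gaussian with the stated variance.
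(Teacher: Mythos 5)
Your argument is correct, but it is a genuinely different proof from the one in the paper. The paper's proof is a stability argument: taking independent copies $X_1,\dots,X_n$ of $X$, it notes that $Z_n=(X_1+\cdots+X_n)/\sqrt{n}$ has the same finite-dimensional distributions as $X$ (each being mean zero Gaussian on $E^d$), that equality of finite-dimensional distributions extends to equality of Borel laws on the separable space $C_E[0,1]$, and hence that $f(X)$ has the same law as $(f(X_1)+\cdots+f(X_n))/\sqrt{n}$ for every $n$; Gaussianity then follows from the classical characterization of such ``stable of index $2$'' real laws (Breiman, Prop.~9.1 and Problem~2). Your route instead approximates $f$ by functionals of finitely many coordinates via piecewise-linear interpolation, gets Gaussianity of $f(\pi_m X)$ directly from Definition~\ref{def gaussian}, and passes to the limit. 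Your approach is more self-contained (no appeal to the theory of stable laws) at the cost of an approximation argument; the paper's is shorter but leans on an external characterization. Two remarks on your write-up. First, your appeal to (\ref{gaussian variance}) to bound $\E\|X\|^2$ is circular as literally stated, since that display is derived in the paper \emph{from} the hypothesis that $\mu$ is a Gaussian measure on $B$ --- exactly what you are proving; your fallback via Fernique/Borell applied to $\|X\|=\sup_{t,\,\|g\|^*\le 1} g(X(t))$ (a countable sup of a jointly Gaussian family, by separability of $E$ and continuity of paths) does repair this. Second, the entire third step is unnecessary: if $N(0,\sigma_m^2)$ converges in distribution, the characteristic functions $e^{-\sigma_m^2u^2/2}$ force $\sigma_m^2\to\sigma^2<\infty$ and the limit is $N(0,\sigma^2)$; and once $f(X)$ is known to be mean zero Gaussian, its variance is automatically $\E[f(X)^2]=\int_B f^2\,d\mu$, which is (\ref{var f}). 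So no uniform second-moment bound is needed, and the one potentially circular ingredient can simply be deleted.
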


\begin{proof} Let $X_1,X_2,\cdots, X_n$ be independent copies of $X$. Then, for each integer $d \ge 1$ and $0\le t_1<\cdots<t_d \le 1$ 
the finite dimensional distributions
\begin{align*}
\mu_X^{t_1,\cdots,t_d} =: \mathcal{L}(X(t_1),\cdots,X(t_d))
\end{align*}
and 
\begin{align*}
\mu_{X_{j}}^{t_1,\cdots,t_d} =: \mathcal{L}(X_j(t_1),\cdots,X_j(t_d)),
\end{align*}
on $E^d$ are such that
\begin{align*}
\mu_X^{t_1,\cdots,t_d}=\mu_{X_{j}}^{t_1,\cdots,t_d}. 
\end{align*}
Moreover, since the measures are mean zero Gaussian on $E^d$, we then have for 
\begin{align*}
Z_n =(X_1+\cdots+X_n)/\sqrt{n}.
\end{align*}
that 
\begin{align*}
\mu_{Z_n}^{t_1,\cdots,t_d} =: \mathcal{L}(Z_n(t_1),\cdots,Z_n(t_d))= \mu_X^{t_1,\cdots,t_d}~{\rm{on}}~E^d.
\end{align*}
Since equality of the finite dimensional distributions of measures on cylinder sets of $C_E[0,1]$ extends to the Borel subsets, we thus have
\begin{align*}
\mu=\mathcal{L}(X)= \mathcal{L}(Z_n), n \ge 1,
\end{align*}
and hence for every $f \in C_E^{*}[0,1]$ and $n \ge 1$
\begin{align*}
\mathcal{L}(f(X))= \mathcal{L}(f(Z_n)).
\end{align*}
Therefore, 
\begin{align*}
\mathcal{L}(f(X))= \mathcal{L}(\frac{f(X_1)+\cdots+f(X_n)}{\sqrt{n}}),
\end{align*}
and by Proposition 9.1 in \cite{breiman}, p. 186 and its extension in Problem 2 in \cite{breiman} p. 202,  $f(X)$ is mean zero Gaussian with variance as in (\ref{var f}). 
\end{proof}

\begin{secthm}\label{lim Gamma_{k}} Let $Y= \{Y(t): t \ge 0\}$ denote a centered $E$-valued sample continuous Gaussian process, and for each integer $k\ge 1$ define $X_k(t)$ as in (\ref{X_{k}}) with $\mu_k$ the centered Borel probability measure induced by $X_{k}$ on the Banach space $B=C_E[0,1]$ with norm
$\|\cdot\|$ given by (\ref{norm C_{E}}). Then, the measures $\{\mu_{k}:k \ge 1\}$ are Gaussian measures on $B$. In addition, assume $\mu$ is a non-degenerate Gaussian measure on $B$, and

\begin{align}
\Gamma= \sup_{x\in K} \|x\|~{\rm{and}}~\Gamma_k= \sup_{x\in K_k} \|x\|, k \ge1, 
\end{align}
where $K,K_1,K_2,\cdots$ are the unit balls of the Hilbert spaces $H_{\mu}, H_{\mu_1}, \cdots$ that generate the Gaussian measures $\mu, \mu_1,\mu_2,\cdots$, and 
\begin{align*}
\tilde{Y}(t)= \frac{Y(t)}{\Gamma_k}I(t\in [k-1,k)), k \ge 1,
\end{align*}
where $\tilde{Y}(t)$ is understood to be zero for $t \in [k-1,k)$ whenever $\Gamma_k=0$. If $\{\mu_k: k \ge 1\}$ converges weakly to $\mu$ in $B=C_E[0,1]$, then
\begin{align}\label{Gammak to Gamma}
\lim_{k \rightarrow \infty} \Gamma_k= \Gamma>0, 
\end{align}
and with probability one 
\begin{align}\label{limsup le 0 tilde Y}
\limsup_{T \rightarrow \infty} [\sup_{0 \le t \le T} q(\tilde{Y}(t)) - \sqrt{2LT}] \le 0.
\end{align}
Moreover, if 
\begin{align}\label{sqrt{(L k})^{-1}}
\frac{\Gamma_k}{\Gamma}-1=o((\sqrt{Lk})^{-1}),
\end{align}
then with probability one
\begin{align}\label{sqrt conclusion}
\limsup_{T \rightarrow \infty} [\sup_{0 \le t \le T} q(\frac{Y(t)}{\Gamma}) - \sqrt{2LT}] \le 0. 
\end{align}
In addition, if (\ref{liminf max fk ge 0}) holds with $f_n \in B^*,~ x_n \in K_n$ with $\|x_n\|= \Gamma_n, \|x_n\|_{\mu_n}=1$, $f_n(x_n)=\Gamma_n $, and $\|f_n\|^{*}=1$, then with probability one 
\begin{align}\label{liminf tilde Y ge 0}
\liminf_{T \rightarrow \infty} [\sup_{0 \le t \le T} q({\tilde{Y}}(t)) - \sqrt{2LT}] \ge 0, 
\end{align}
and
\begin{align}\label{liminf Y ge 0 Gamma k}
\liminf_{T \rightarrow \infty} [\sup_{0 \le t \le T} q(\frac{Y(t)}{\Gamma}) - \sqrt{2LT}] \ge 0. 
\end{align}
with probability one whenever (\ref{sqrt{(L k})^{-1}}) is assumed.
\end{secthm}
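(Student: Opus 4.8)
The plan is to deduce the whole statement from Theorem \ref{main with lim} applied to the sequence of $B$-valued random vectors $\{X_k : k \ge 1\}$, where $B = C_E[0,1]$ carries the sup-norm (\ref{norm C_{E}}) and $X_k(t) = Y(t+(k-1))$ as in (\ref{X_{k}}). First I would check the hypotheses of Theorem \ref{main with lim}: Lemma \ref{f(X) gaussian} shows that each $\mu_k = \mathcal{L}(X_k)$ is a genuine centered Gaussian measure on $B$ --- not merely a measure with Gaussian finite-dimensional distributions --- and by assumption $\{\mu_k\}$ converges weakly to the non-degenerate measure $\mu$. Theorem \ref{main with lim} then gives, in one stroke, $\Gamma_k \to \Gamma > 0$, which is (\ref{Gammak to Gamma}), along with the discrete-time statements for $\tilde M_n = \max_{1\le k\le n}\|X_k\|/\Gamma_k$ and $M_n = \max_{1\le k\le n}\|X_k\|/\Gamma$: always $\limsup_n[\tilde M_n - \sqrt{2Ln}] \le 0$; under (\ref{sqrt{(L k})^{-1}}) also $\limsup_n[M_n - \sqrt{2Ln}] \le 0$; and, under the stated hypothesis (\ref{liminf max fk ge 0}) on the functionals $f_n$ (whose normalization $\sigma_{f_n}^2 = \Gamma_n^2$ comes from Lemma \ref{link gamma sigma}), the matching bounds $\liminf_n[\tilde M_n - \sqrt{2Ln}] \ge 0$ and $\liminf_n[M_n - \sqrt{2Ln}] \ge 0$, the latter again under (\ref{sqrt{(L k})^{-1}}). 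Since $\Gamma_k \to \Gamma > 0$, we have $\Gamma_k > 0$ for all large $k$, which is all that the $\limsup$/$\liminf$ assertions require.

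Next I would translate these discrete conclusions into the continuous-time ones. Because $X_k(t) = Y(t+(k-1))$, one has $\|X_k\| = \sup_{0\le t\le 1}q(X_k(t)) = \sup_{k-1\le s\le k}q(Y(s))$, and sample continuity of $Y$ lets us drop the right endpoint: $\sup_{k-1\le s\le k}q(Y(s)) = \sup_{k-1\le s<k}q(Y(s))$. Since $\tilde Y$ equals $Y/\Gamma_k$ on $[k-1,k)$, this identifies $\sup_{0\le t<n}q(\tilde Y(t)) = \tilde M_n$, and in the same way $\sup_{0\le t\le n}q(Y(t)/\Gamma) = M_n$. As both suprema are nondecreasing in their upper limit, for real $T\in[n,n+1)$ I get the sandwiches $\tilde M_n \le \sup_{0\le t\le T}q(\tilde Y(t)) \le \tilde M_{n+1}$ and $M_n \le \sup_{0\le t\le T}q(Y(t)/\Gamma) \le M_{n+1}$.

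Then I would combine these with $\sqrt{2Ln} \le \sqrt{2LT} \le \sqrt{2L(n+1)}$ and the elementary fact $\sqrt{2L(n+1)} - \sqrt{2Ln} \to 0$. The upper half of the sandwich yields $\limsup_{T\to\infty}[\sup_{0\le t\le T}q(\tilde Y(t)) - \sqrt{2LT}] \le \limsup_n[\tilde M_{n+1} - \sqrt{2L(n+1)}] \le 0$, which is (\ref{limsup le 0 tilde Y}), and the same computation with $M_n$ under (\ref{sqrt{(L k})^{-1}}) gives (\ref{sqrt conclusion}). The lower half yields $\liminf_{T\to\infty}[\sup_{0\le t\le T}q(\tilde Y(t)) - \sqrt{2LT}] \ge \liminf_n[\tilde M_n - \sqrt{2L(n+1)}] = \liminf_n[\tilde M_n - \sqrt{2Ln}] \ge 0$, which is (\ref{liminf tilde Y ge 0}) (invoking the liminf conclusion of Theorem \ref{main with lim} under hypothesis (\ref{liminf max fk ge 0})), and the same with $M_n$ under (\ref{sqrt{(L k})^{-1}}) gives (\ref{liminf Y ge 0 Gamma k}).

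I do not expect a deep obstacle here: the real content sits in Theorem \ref{main with lim} and Lemma \ref{f(X) gaussian}, and what is left is this reduction. The two places that need care are, first, the appeal to Lemma \ref{f(X) gaussian} to know that the $\mu_k$ are Gaussian measures on the infinite-dimensional space $C_E[0,1]$, so that the compact unit balls $K_k$, the constants $\Gamma_k$, and the norming functionals $f_k$ of Theorem \ref{main with lim} genuinely exist; and second, the endpoint bookkeeping --- the half-open blocks $[k-1,k)$ in the definition of $\tilde Y$, the overlap of consecutive blocks at the integers, and the passage from integer $n$ to real $T$ --- all handled by sample continuity together with monotonicity, at the harmless cost of the shift from $n$ to $n+1$ that washes out because $\sqrt{2L(n+1)} - \sqrt{2Ln} \to 0$.
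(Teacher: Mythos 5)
Your proposal is correct and follows essentially the same route as the paper: invoke Lemma \ref{f(X) gaussian} so that Theorem \ref{main with lim} applies to the sequence $\{X_k\}$, identify $\sup_{0\le t\le n}q(\tilde Y(t))$ with $\tilde M_n$ and $\sup_{0\le t\le n}q(Y(t)/\Gamma)$ with $M_n$ via sample continuity at the block endpoints, and reduce real $T$ to integer $n$ using the negligible gap $\sqrt{2L(n+1)}-\sqrt{2Ln}\to 0$. The only cosmetic difference is that the paper cites (2.2) of Goodman--Kuelbs directly for $\Gamma_k\to\Gamma>0$, whereas you obtain it as part of the conclusion of Theorem \ref{main with lim}; these are the same fact.
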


The proof of Theorem \ref{lim Gamma_{k}} follows easily from Theorem \ref{main with lim}, and is given below. However, we first indicate a corollary for continuous time stationary Gaussian processes. We start with a definition and a lemma.

\begin{secdefn}\label{def stationary} The process $Y=: \{Y(t): t \ge 0\}$ is said to be an $E$-valued stationary process if for each integer $r\geq 1,~ 0 \le t_1 <t_2<\cdots <t_r< \infty,$ and $h>0$ the finite dimensional distributions of
\begin{align}
(Y(t_1),\cdots,Y(t_r))~{\rm{and}}~(Y(t_1+h),\cdots,Y(t_r+h))~{\rm{on}}~E^{r} 
\end{align}
are equal.
\end{secdefn}

\begin{seclem}\label{Xk gaussian} If $Y=: \{Y(t): t \ge 0\}$ is a sample continuous mean zero $E$-valued stationary process, then the Borel probability measures $\mu_k= \mathcal{L}(X_k)$ on  $C_E[0,1]$ as in (\ref{X_{k}}) are equal. Moreover, if we also assume $Y=\{Y(t): t \ge 0\}$ is a mean Gaussian process in the sense of Definition \ref{def gaussian}, then the $X_k, k \ge 1,$ are mean zero Gaussian processes as in Definition \ref{def gaussian}. In addition, for every $f \in C_E^{*}[0,1]$, $f(X)$ is a mean zero Gaussian random variable with variance as in (\ref{var f}) with $B=C_E[0,1]$.
\end{seclem}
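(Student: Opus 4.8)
The plan is to treat Lemma \ref{Xk gaussian} as an essentially bookkeeping consequence of Definition \ref{def stationary}, Definition \ref{def gaussian}, and the already-proven Lemma \ref{f(X) gaussian}, so that the proof is a matter of assembling three observations in the right order. First I would fix integers $k \ge 1$ and, using stationarity of $Y$ with the shift $h = k-1$, note that for every $d \ge 1$ and every finite set $0 \le t_1 < \cdots < t_d \le 1$ the $E^d$-valued vector $(X_k(t_1),\dots,X_k(t_d)) = (Y(t_1+k-1),\dots,Y(t_d+k-1))$ has the same law as $(Y(t_1),\dots,Y(t_d)) = (X_1(t_1),\dots,X_1(t_d))$. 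Since these finite dimensional distributions determine the law of a sample continuous process on $C_E[0,1]$ (equality on cylinder sets extends to the Borel $\sigma$-algebra, exactly as used in the proof of Lemma \ref{f(X) gaussian}), this gives $\mu_k = \mathcal{L}(X_k) = \mathcal{L}(X_1) = \mu_1$ for all $k$, which is the first assertion.

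Next, assuming in addition that $Y$ is a mean zero Gaussian process per Definition \ref{def gaussian}, I would observe that for each $k$ and each finite subset $\{t_1,\dots,t_d\} \subseteq [0,1]$ the vector $(X_k(t_1),\dots,X_k(t_d))$ equals $(Y(t_1+k-1),\dots,Y(t_d+k-1))$, whose law is a mean zero Gaussian measure on $E^d$ because $\{t_1+k-1,\dots,t_d+k-1\}$ is a finite subset of $[0,\infty)$ and $Y$ is Gaussian in the sense of Definition \ref{def gaussian}. Hence each $X_k$ is itself a mean zero $E$-valued Gaussian process in the sense of Definition \ref{def gaussian}. (Note that $X_k$ is sample continuous because $Y$ is and $X_k$ is just a time shift of $Y$.) The final assertion — that $f(X)$ is a mean zero real Gaussian with variance $\int_B f^2\,d\mu$ for every $f \in C_E^*[0,1]$ — is then immediate: each $X_k$ satisfies the hypotheses of Lemma \ref{f(X) gaussian}, so $\mu = \mathcal{L}(X_k)$ is a Gaussian measure on the Borel subsets of $C_E[0,1]$, and the variance formula is exactly the content of Lemma \ref{f(X) gaussian}.

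There is no real obstacle here; the only point requiring a modicum of care is the passage from equality (or Gaussianity) of finite dimensional distributions to the corresponding statement for the $C_E[0,1]$-valued random element, and this is handled precisely as in the proof of Lemma \ref{f(X) gaussian}, invoking that a Borel probability measure on the separable space $C_E[0,1]$ is determined by its values on cylinder sets. One should also be mildly careful that the shift $t \mapsto t + (k-1)$ maps $[0,1]$ into $[0,\infty)$, so Definition \ref{def stationary} (stated for $0 \le t_1 < \cdots < t_r < \infty$ and $h > 0$) and Definition \ref{def gaussian} (stated for finite subsets of $T = [0,\infty)$) both apply verbatim. With those remarks in place the proof is a short concatenation of the two cited definitions and Lemma \ref{f(X) gaussian}.
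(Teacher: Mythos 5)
Your proposal is correct and follows essentially the same route as the paper: stationarity gives equality of the finite dimensional distributions of $X_k$ and $X_1$, which extends from cylinder sets to the Borel $\sigma$-algebra of the separable space $C_E[0,1]$; Gaussianity of the finite dimensional distributions of $Y$ passes directly to those of each $X_k$; and the final assertion is an application of Lemma \ref{f(X) gaussian}. The one point of care you flag — that equality of cylinder-set measures determines a Borel measure on $C_E[0,1]$ — is exactly the point the paper spells out, so nothing is missing.
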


\begin{proof} For all $d\ge 1$ and $0 \le t_1<t_2<\cdots<t_d \le 1$ a typical cylinder set of $C_E[0,1]$ is 
\begin{align*}
A=\{ x \in C_E[0,1]: (x(t_1),\cdots,x(t_d))  \in J\}, 
\end{align*}
where $J$ is a Borel subset of $E^d$. The class of all such cylinder sets form an algebra of sets, and since $C_E[0,1]$ is separable the minimal sigma algebra containing them is all the Borel subsets. To see this recall the basic fact that for $\{t_j:j\ge 1\}$ a dense subset of [0,1] and $\epsilon>0$ we have
\begin{align*}
\{ x \in C_E[0,1]: \|x\|=&\sup_{t \in [0,1]}q(x(t) \le \epsilon\} \\
&= \cap_{n \ge 1} \{x \in C_E[0,1]: \sup_{ 1 \le j \le n}q(x(t_j)) \le \epsilon\},
\end{align*}
and then argue as is usual to show that open subsets of $C_E[0,1]$ are in this minimal sigma algebra. Since Y is assumed to be stationary, the finite dimensional distributions of $X_1$ agree with those for $X_k$ for all $k \ge 2$, which implies $\mu_k(A)=\mu_1(A)$ for all cylinder sets $A$, and hence $\mu_k= \mu_1$ on the Borel sets for all $k\ge 1$. Moreover, if we assume $Y$ is a mean zero $E$-valued Gaussian process, then the finite dimensional distributions  of $Y$ are all mean zero Gaussian, and hence those of the $X_k$ are also mean zero Gaussian for all $k \ge 1$. Therefore, the $X_k$ are mean zero $E$-valued Gaussian processes in the sense of Definition \ref{def gaussian}. Since
 $C_E[0,1]$ is separable in the sup-norm given in (\ref{norm C_{E}}), the remainder of the lemma follows from Lemma \ref{f(X) gaussian}.
\end{proof}

 \begin{seccor}\label{main with OU}  Let $Y= \{Y(t): t \ge 0\}$ be a centered $E$-valued sample continuous non-degenerate stationary Gaussian process, and 
for each integer $k\ge 1$ define $X_k(t)$ as in (\ref{X_{k}}) with $\mu_k$ the centered Borel probability measure induced by $X_k$ on $B=C_E[0,1]$ as given in (\ref{X_{k}}).
Then, the $\{\mu_k:k \ge 1\}$ are mean zero Gaussian measures on $B$ with $ \mu_k= \mu$ for all $k \ge 1$. Furthermore, if $K$ is the unit ball of the Hilbert space $H_{\mu}$ that generates the Gaussian measure $\mu$, and
\begin{align*}
\Gamma=\sup_{x \in K} \|x\|,
\end{align*}
then with probability one
\begin{align}\label{limsup Y le 0}
\limsup_{T \rightarrow \infty} [\sup_{0 \le t \le T} q(\frac{Y(t)}{\Gamma}) - \sqrt{2LT}] \le 0. 
\end{align}
Furthermore, if for some $f_0 \in B^*$ such that  $\|f_0\|^{*}=1$  the sequence $\{f_0(X_n): n \ge 1\}$ is a stationary mean zero Gaussian sequence with
\begin{align*}
\sigma_{f_0}^2 =: \int_B f_0^2(x)d\mu(x)=\Gamma^2,
\end{align*}
and 
\begin{align}\label{correl decay}
\lim_{n \rightarrow \infty} ( \log_{e}n)E[f_0(X_1)f_0(X_n)] = 0,
\end{align}
then
\begin{align}\label{liminf Y ge 0 corr}
 \liminf_{T \rightarrow \infty} [\sup_{0 \le t \le T} q(\frac{Y(t)}{\Gamma}) - \sqrt{2LT}] \ge 0.   
\end{align}
with probability one.
\end{seccor}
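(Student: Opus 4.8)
\textbf{Proof proposal for Corollary \ref{main with OU}.}

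The plan is to deduce everything from Theorem \ref{lim Gamma_{k}} and Corollary \ref{limsup le 0 f0 stat}, using Lemma \ref{Xk gaussian} to supply the Gaussian structure needed on $B = C_E[0,1]$. First I would invoke Lemma \ref{Xk gaussian}: since $Y$ is a sample continuous, mean zero, $E$-valued stationary Gaussian process, the induced measures $\mu_k = \mathcal{L}(X_k)$ on $C_E[0,1]$ are all equal to a common measure $\mu$, and each is a mean zero Gaussian measure on $B$ (the last assertion uses separability of $C_E[0,1]$ in the sup-norm together with Lemma \ref{f(X) gaussian}). Non-degeneracy of $Y$ gives non-degeneracy of $\mu$. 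Consequently $\Gamma_k = \sup_{x \in K_k}\|x\| = \sup_{x \in K}\|x\| = \Gamma$ for every $k$, so condition (\ref{sqrt{(L k})^{-1}}) holds trivially (the left side is identically zero), and the weak convergence hypothesis $\mu_k \to \mu$ is automatic since $\mu_k = \mu$. Then the normalized process $\tilde Y$ of Theorem \ref{lim Gamma_{k}} coincides with $Y/\Gamma$ on each interval $[k-1,k)$, so (\ref{sqrt conclusion}) of Theorem \ref{lim Gamma_{k}} gives precisely (\ref{limsup Y le 0}).

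For the $\liminf$ statement (\ref{liminf Y ge 0 corr}), the cleanest route is not through assumption (\ref{liminf max fk ge 0}) of Theorem \ref{lim Gamma_{k}} directly, but through Corollary \ref{limsup le 0 f0 stat} applied to the sequence $\{X_n : n \ge 1\}$, which has $\mathcal{L}(X_n) = \mu$ for all $n$. I would first reduce to the normalization $\Gamma = 1$ by replacing $\|\cdot\|$ with $\tilde q(\cdot) = \|\cdot\|/\Gamma$ as in Lemma \ref{link gamma sigma}; under this rescaling the hypothesis $\sigma_{f_0}^2 = \Gamma^2$ becomes $\sigma_{f_0}^2 = 1$ (equivalently, work with $f_0/\Gamma$, whose dual norm with respect to $\tilde q$ is $1$), condition (\ref{correl decay}) is unchanged, and $\{f_0(X_n)\}$ remains a stationary mean zero Gaussian sequence. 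Then Corollary \ref{limsup le 0 f0 stat} yields $\liminf_{n\to\infty}[\max_{k\le n}\|X_k\| - \sqrt{2Ln}] \ge 0$ a.s. (in the $\Gamma = 1$ normalization), i.e. $\liminf_{n\to\infty}[\max_{k\le n} q(\frac{Y(t)}{\Gamma}) \text{ over } t \in [k-1,k) - \sqrt{2Ln}] \ge 0$. Finally I would pass from the discrete $\liminf$ over $n$ to the continuous $\liminf$ over $T$: for $T \in [n, n+1)$ one has $\sup_{0\le t\le T} q(Y(t)/\Gamma) \ge \max_{k\le n}\|X_k\|$ (in the rescaled norm), and $\sqrt{2LT} \le \sqrt{2L(n+1)} = \sqrt{2Ln}\,(1 + o(1))$ with the error $o(\sqrt{Ln})$, so the discrete bound transfers to (\ref{liminf Y ge 0 corr}).

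The main obstacle, I expect, is the bookkeeping around the $\Gamma$-normalization: one must check that rescaling the norm from $\|\cdot\|$ to $\tilde q = \|\cdot\|/\Gamma$ is exactly the device that turns the hypotheses of this corollary into the hypotheses of Corollary \ref{limsup le 0 f0 stat} (in particular that $\tilde q^*(f_0/\Gamma) = 1$ and that the existence of a witnessing $x_0 \in K$ with $f_0(x_0) = \Gamma$ is guaranteed, or not needed, given that $\sigma_{f_0}^2 = \Gamma^2$ is being assumed outright), and that the conclusion, stated in the $\tilde q$-norm, is literally (\ref{liminf Y ge 0 corr}) back in the original norm. Everything else — the identification $\mu_k = \mu$, the triviality of (\ref{sqrt{(L k})^{-1}}), and the discrete-to-continuous passage in $T$ — is routine once Lemma \ref{Xk gaussian} and Theorem \ref{lim Gamma_{k}} are in hand.
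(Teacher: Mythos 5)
Your proposal is correct and follows essentially the same route as the paper: Lemma \ref{Xk gaussian} gives $\mu_k=\mu$ (so the hypotheses of Theorem \ref{lim Gamma_{k}} hold trivially and (\ref{sqrt conclusion}) yields the limsup), and the liminf is obtained by reducing $\sup_{0\le t\le n}q(Y(t)/\Gamma)$ to $\max_{k\le n}\|X_k\|/\Gamma$, bounding below by $\max_{k\le n}|f_0(X_k)|/\Gamma$, and invoking Corollary \ref{limsup le 0 f0 stat} for the stationary variance-one sequence $\{f_0(X_k)/\Gamma\}$. Your explicit rescaling to $\tilde q=\|\cdot\|/\Gamma$ and the $T\in[n,n+1)$ interpolation are just more detailed versions of the normalization and the ``it suffices to take $T=n$'' step the paper uses.
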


\begin{proof}[Proof of Theorem \ref{lim Gamma_{k}}] From Lemma \ref{f(X) gaussian} we have that the measures $\{\mu_k:k \ge 1\}$ are mean zero Gaussian measures on $B$, and since $\{\mu_k: k\ge 1\}$ converge weakly to the non-degenerate measure $\mu$ on $B=C_E[0,1]$,  (2.2) in Theorem 1 of  \cite{goodman-kuelbs-weakly} implies (\ref{Gammak to Gamma}). Furthermore, if $n-1\le T \le n$ then $LT -L(n-1) \le 1/(n-1)$ for $n \ge 2$,
so it suffices to prove the result when $T=n$. Then,
\begin{align*}
\sup_{0 \le t \le n}q(\tilde{Y}(t))= \sup_{k-1 \le t<k, 1 \le k \le n} q(\tilde{Y}(t))= \sup_{k-1 \le t<k, 1 \le k \le n} q(\frac{Y(t)}{\Gamma_k}), 
\end{align*}
which implies
\begin{align}\label{sup norms equal}
\sup_{0 \le t \le n}q(\tilde{Y}(t))= \sup_{0 \le t \le 1, 1 \le k \le n} q(\frac{X_k(t)}{\Gamma_k}) = \sup_{1 \le k \le  n} \frac{\|X_k(\cdot)\|}{\Gamma_k}, 
\end{align}
where sample function continuity of $X_k$ at $t=1$ is used on the last equality. Since we assumed (\ref{liminf max fk ge 0}) here, (\ref{sup norms equal}) , and (\ref{limsup ge 0}) and (\ref{liminf Mtilde ge 0}) of Theorem \ref{main with lim}, combine to establish (\ref{limsup le 0 tilde Y}) and (\ref{liminf tilde Y ge 0}) with $T=n$.
\bigskip

Since we also have
\begin{align}\label{sup=sup}
\sup_{0 \le t \le n}q(\frac{Y(t)}{\Gamma})= \sup_{0 \le t \le 1, 1 \le k \le n} q(\frac{X_k(t)}{\Gamma}) = \sup_{1 \le k \le  n} \frac{\|X_k(\cdot)\|}{\Gamma}, 
\end{align}
assuming (\ref{sqrt{(L k})^{-1}}) also holds, (\ref{sqrt conclusion}) and \ref{liminf Y ge 0 Gamma k})  follow from (\ref{gamma ratio le 0}) and (\ref{liminf ge 0}). Thus Theorem \ref{lim Gamma_{k}} is proved.
\end{proof}

\begin{proof}[Proof of Corollary \ref{main with OU}] As before it suffices to prove the results with $T=n$. The limsup result in (\ref{limsup Y le 0}) then follows using Lemma \ref{Xk gaussian}, and applying either (\ref{limsup le 0 tilde Y}) or (\ref{sqrt conclusion}) of Theorem \ref{lim Gamma_{k}}. To verify (\ref{liminf Y ge 0 corr}) observe from (\ref{sup=sup}) and that $f_0\in B^*$ with $\|f_0\|^*=1$ implies
 \begin{align*}
\liminf_{n \rightarrow \infty} [\sup_{0 \le t \le n} q(\frac{Y(t)}{\Gamma}) - \sqrt{2Ln}] = \liminf_{n \rightarrow \infty}[ \max_{1 \le k \le n} \frac{\|X_k(\cdot,\omega)\|}{\Gamma}- \sqrt{2Ln}] 
\end{align*}
\begin{align*}
\ge \liminf_{n \rightarrow \infty} [ \max_{1 \le k \le n} \frac{|f_0(X_k(\cdot, \omega))|}{\Gamma}- \sqrt{2Ln}] \ge 0 
\end{align*}
with probability one by applying Corollary \ref{limsup le 0 f0 stat} since $\sigma_{\frac{f_0}{\Gamma}}^{2}=1$ with $\{f_0(X_k)/\Gamma: k \ge 1\}$ a stationary Gaussian sequence of real-valued random variables with mean zero, variance one, and (\ref{var f})  is assumed. 
\end{proof}

\begin{secrem} In the next section we provide some some applications of the results in Sections 2 and 3. One example we consider in some detail is the Banach-valued Ornstein-Uhlenbeck process. In particular, we show all the assumptions in Corollary \ref{main with OU} (such as the stationarity of $\{f_0(X_k): k \ge 1\}$ and (\ref{correl decay})) can be verified directly from the process itself. Examples showing that if assumption (\ref{sqrt{(L k})^{-1}}) fails, then (\ref{sqrt conclusion}) need not hold are easy to find.
\end{secrem}

\section{Banach-valued Ornstein-Uhlenbeck Processes and Applications}\label{banach OU} 

The goal here is to show the Ornstein-Uhlenbeck process with values in a separable Banach space $E$ has a strong law of large numbers for its maximum as in Theorem \ref{Y stationary} below, and also to derive some strong law limit theorems for maximuma of normalized partial sums of Banach-valued Gaussian random vectors. In fact, we will show (\ref{correl decay}) (and hence (\ref{normalize f0})) can be verified to hold from the process itself, and is not an extra assumption for the Ornstein-Uhlenbeck process.

If $\gamma$ is a non-degenerate mean zero Gaussian measure on the Borel sets of $E$ with norm $q$, we let $W=:\{W(t): t \ge 0\}$ denote the Brownian motion in $E$ generated by $\gamma$.  
The stochastic process
\begin{align}\label{E-valued OU}
Y(t)=: e^{-\frac{t}{2}}W(e^{t}), t \ge 0,
\end{align}
is the $E$-valued Ornstein-Uhlenbeck process generated or determined by $\gamma$-Brownian motion. In particular, we assume $W$ is normalized so that the law of $W(1)$ is $\gamma$ ( see Subsection \ref{sample function 1} for more details).To simplify, we will sometimes say $Y=\{Y(t):t \ge 0\}$ is a 
$\gamma$-generated Ornstein-Uhlenbeck process. Since we always assume $\gamma$ is a non-degenerate mean zero Gaussian measure on $E$, its support is a closed linear subspace of $E$ of dimension at least one. Hence the $\gamma$-generated Ornstein-Uhlenbeck process is also always non-trivial.

The existence of a sample continuous $E$-valued Brownian motion $W=\{W(t): t \ge 0\}$ generated by $\gamma$ follows from \cite{gross-potential}. A precise description appears in Lemma \ref{existence of brownian} below, and more self contained proofs appear in the appendix for this paper \cite{partial-max-appendix}. This immediately implies the sample continuous Ornstein-Uhlenbeck process exists, and we assume throughout  $Y=:\{Y(t): t \ge 0\}$ as in (\ref{E-valued OU}) is a sample continuous version. Lemma (\ref{existence of brownian}) provides the construction of the process $W$ on the probability space $(\Omega_E, \mathcal{F},P)$, where $\Omega_E$ consists of the  
$E$-valued continuous functions $x$ defined on $[0,\infty)$ with $x(0)=0$, $\mathcal{F}$ is the $\sigma$-field of $\Omega_E$ generated by the functions $x \rightarrow x(t), 0 \le t < \infty$, and $P$ is the probability measure on $(\Omega_E,\mathcal{F})$ such that $W=\{W(t): t \ge 0\}$ has stationary independent increments as in (\ref{indep incr}).

\begin{secthm}\label{Y stationary} Let $\gamma$ be a non-degenerate mean zero Gaussian measure on the Borel sets of $E$ with norm $q$, and assume $Y=\{Y(t): t \ge 0\}$ is the $E$-valued sample continuous $\gamma$-generated Ornstein-Uhlenbeck process. Then the following hold:\\
(a)  $Y$ is a stationary mean zero Gaussian process in the sense of Definition \ref{def gaussian} and Definition \ref{def stationary}.\\
(b) The probability measure $\mu$ induced by $\{Y(t): 0 \le t \le 1\}$ on $B=C_E[0,1]$ with norm $\|\cdot\|$ as in (\ref{norm C_{E}}) is a non-degenerate mean zero Gaussian measure in the sense that every $f \in B^*$ is a mean zero Gaussian random variable with variance as in (\ref{var f}). Moreover, the sample continuous processes $\{X_k(t): t \in [0,1]\}$ defined in (\ref{X_{k}}) are Gaussian in the sense of Definition \ref{def gaussian} and they induce mean zero Gaussian measures $\{\mu_k:k \ge 1\}$ on the Borel subsets of  
$B$ such that
\begin{align*}
\mathcal{L}(X_k)=\mu_k=\mu, k \ge 1.
\end{align*}
(c) If $K$ is the unit ball of the Hilbert space $H_{\mu}$ that generates $\mu$ and
\begin{align}\label{Gamma=sup}
\Gamma=\sup_{x \in K}\|x\|=\sup_{x \in K} \sup_{0 \le t \le 1}q(x(t)), 
\end{align}
then $\Gamma \in (0,\infty)$ and with probability one
\begin{align}\label{limsup Y le 0 continuous}
\limsup_{T \rightarrow \infty} [\sup_{0 \le t \le T} q(\frac{Y(t)}{\Gamma}) - \sqrt{2LT}]  \le 0, 
\end{align}
and 
\begin{align}\label{liminf Y ge 0}
\liminf_{T \rightarrow \infty} [\sup_{0 \le t \le T} q(\frac{Y(t)}{\Gamma}) - \sqrt{2LT}] \ge 0.
\end{align}
\end{secthm}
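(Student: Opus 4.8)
The plan is to read (a) and (b) off the explicit representation (\ref{E-valued OU}) together with the facts already established, and then to obtain (c) from Corollary \ref{main with OU}, the only substantive point being the verification of the correlation-decay condition (\ref{correl decay}) directly from the Ornstein--Uhlenbeck dynamics.

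For (a): from the construction in Lemma \ref{existence of brownian}, $W$ has stationary independent increments as in (\ref{indep incr}) with $W(u)-W(v)$ equal in law to $\sqrt{u-v}\,G$ for $G$ having law $\gamma$. Hence the joint law of any finite family $(W(u_1),\dots,W(u_m))$ is a bounded linear image of independent centered Gaussian vectors on $E$, so it is mean zero Gaussian on $E^m$; applying the diagonal map $(y_i)\mapsto(e^{-t_i/2}y_i)$ shows $(Y(t_1),\dots,Y(t_d))$ is mean zero Gaussian on $E^d$, which is Definition \ref{def gaussian}. For stationarity I would record the short computation $\E[f(Y(s))g(Y(t))]=e^{-|t-s|/2}\int_E fg\,d\gamma$ for $f,g\in E^*$; since a mean zero Gaussian measure on $E^r$ is determined by its covariance functional, shift invariance of these covariances forces shift invariance of all finite dimensional distributions of $Y$, which with sample continuity is Definition \ref{def stationary}. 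Part (b) then follows: by (a), $\{Y(t):0\le t\le1\}$ is a sample continuous mean zero $E$-valued Gaussian process, so Lemma \ref{f(X) gaussian} makes $\mu=\mathcal L(\{Y(t):0\le t\le1\})$ a mean zero Gaussian measure on $B=C_E[0,1]$, and Lemma \ref{Xk gaussian} (via the stationarity from (a)) gives that the $X_k$ are Gaussian per Definition \ref{def gaussian} with $\mathcal L(X_k)=\mu_k=\mu$ for all $k$. For non-degeneracy of $\mu$ it suffices to produce one functional with positive variance: choosing $\phi\in E^*$ with $\int_E\phi^2\,d\gamma>0$ (possible since $\gamma$ is non-degenerate) and $f(x)=\phi(x(0))\in B^*$, the variance (\ref{var f}) of $f$ equals $\E[\phi(Y(0))^2]=\int_E\phi^2\,d\gamma>0$ since $\mathcal L(Y(0))=\mathcal L(W(1))=\gamma$; hence $\Gamma=\sigma(\mu)\in(0,\infty)$ by Lemma \ref{link gamma sigma} and the compactness of $K$.

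For (c): $Y$ is a centered sample continuous non-degenerate stationary Gaussian process, so (\ref{limsup Y le 0 continuous}) is exactly the limsup conclusion of Corollary \ref{main with OU}. For (\ref{liminf Y ge 0}) I must exhibit an admissible $f_0$. Compactness of $K$ and Hahn--Banach supply $f_0\in B^*$, $x_0\in K$ with $\|x_0\|=\Gamma$, $f_0(x_0)=\Gamma$, $\|f_0\|^*=1$, whereupon Lemma \ref{link gamma sigma} gives $\sigma_{f_0}^2=\int_B f_0^2\,d\mu=\Gamma^2$; and $\{f_0(X_n):n\ge1\}$ is a stationary mean zero Gaussian sequence because $(X_1,\dots,X_d)$ is a mean zero Gaussian measure on $B^d$ (argue as in the proof of Lemma \ref{f(X) gaussian}, applied to $Y$ on $[0,d]$) and $\{X_n\}$ inherits stationarity from $Y$. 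Everything thus reduces to checking (\ref{correl decay}).

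That check is where the structure of the process is used, and I expect it to be the only real obstacle. For $n\ge 2$ and $t\in[0,1]$, splitting $W(e^{t+n-1})=W(e)+\big(W(e^{t+n-1})-W(e)\big)$ in (\ref{E-valued OU}) gives the pathwise decomposition
\[
X_n(t)=Y(t+n-1)=e^{-(n-2)/2}\,e^{-t/2}\,Y(1)+Z_n(t),\qquad Z_n(t):=e^{-(t+n-1)/2}\big(W(e^{t+n-1})-W(e)\big),
\]
where $Z_n=\{Z_n(t):t\in[0,1]\}\in B$ is built only from increments of $W$ after time $e$, hence independent of $X_1=\{Y(s):0\le s\le1\}$. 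Since $\iota\colon E\to B$, $\iota(y)(t)=e^{-t/2}y$, is bounded linear with $\|\iota(y)\|=q(y)$, the map $\ell:=f_0\circ\iota$ lies in $E^*$, and linearity of $f_0$ gives $f_0(X_n)=e^{-(n-2)/2}\ell(Y(1))+f_0(Z_n)$; multiplying by $f_0(X_1)$, taking expectations, and using $X_1\perp Z_n$ together with $\E[f_0(X_1)]=0$ leaves only
\[
\big|\E[f_0(X_1)f_0(X_n)]\big|=e^{-(n-2)/2}\,\big|\E[f_0(X_1)\ell(Y(1))]\big|\le e^{-(n-2)/2}\,\Gamma\Big(\int_E\ell^2\,d\gamma\Big)^{1/2}
\]
by Cauchy--Schwarz, $\sigma_{f_0}^2=\Gamma^2$, and $\mathcal L(Y(1))=\gamma$. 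This is $o\big(1/\log n\big)$, so (\ref{correl decay}) holds and Corollary \ref{main with OU} delivers (\ref{liminf Y ge 0}). In short, the crux is that the ``future'' block $X_n$ decomposes along the independent-increment structure of $W$ into an explicit $E^*$-linear functional of $Y(1)$ with exponentially small coefficient plus a block independent of the ``past'' block $X_1$; identifying $\ell$, checking the independence of $Z_n$, and the Cauchy--Schwarz estimate are then routine, as is the reduction in (a)/(b) of the joint Gaussianity on $B^d$ to Lemma \ref{f(X) gaussian} on $[0,d]$.
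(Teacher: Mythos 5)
Your proposal is correct, and parts (a), (b), and the limsup half of (c) follow the same route as the paper (the paper packages your inline arguments for (a) as Lemmas \ref{OU gaussian} and \ref{OU stationary}, and then invokes Lemmas \ref{f(X) gaussian}, \ref{Xk gaussian} and Corollary \ref{main with OU} exactly as you do). Where you genuinely diverge is in the two technical inputs for the liminf. For the covariance decay (\ref{correl decay}), the paper first transports the problem by a Banach--Mazur isometry into a subspace of $C[0,1]$ (Lemma \ref{change variables}), represents an arbitrary $\hat h\in C_F^{*}[0,1]$ by a signed measure on $I\times I$ via Hahn--Banach and Riesz, and only then runs the independent-increments computation (Lemmas \ref{change space lemma} and \ref{decay rate lemma}). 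Your decomposition $X_n(t)=e^{-(n-2)/2}e^{-t/2}Y(1)+Z_n(t)$ with $Z_n$ independent of $X_1$, together with the observation that $\iota(y)(t)=e^{-t/2}y$ embeds $E$ isometrically into $B$ so that $\ell=f_0\circ\iota\in E^*$, reaches the same bound $|\E[f_0(X_1)f_0(X_n)]|\le e^{-(n-2)/2}\,\Gamma\,(\int_E\ell^2 d\gamma)^{1/2}$ with no embedding and no Riesz representation; it is more elementary and entirely sufficient for Theorem \ref{Y stationary}. Similarly, for the joint Gaussianity and stationarity of $\{f_0(X_k)\}$ the paper uses the Bochner--Taylor representation of $C_E^{*}[0,1]$ (Lemma \ref{f(Xk) stationary}), whereas you push the Gaussian law of $Y$ on $C_E[0,d]$ forward under the bounded linear map onto $B^d$; both work, and yours again avoids the representation theorem. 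What the paper's heavier machinery buys is a bound uniform over all $h\in C_E^{*}[0,1]$ (inequality (\ref{decay rate formula four}) and the equality $\Gamma=\Gamma_\gamma$ of Remark \ref{Gamma=gamma}), which is reused in the proof of Corollary \ref{gaussian D-E}; your argument is tailored to the single functional $f_0$, which is all this theorem needs.
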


\begin{seccor}\label{gaussian D-E} Let $\gamma$ be a non-degenerate mean zero Gaussian measure on $E$, and assume  $G_1,G_2,\cdots$ are i.i.d. Gaussian random vectors with distribution $\gamma$. If $S_k=G_1+\cdots G_k$ for $k \ge 1$ and $\Gamma$ is as in (\ref{Gamma=sup}), then with probability one
\begin{align}\label{limsup normalized le 0}
\limsup_{ n \rightarrow \infty} [\max_{1 \le k \le n} q( \frac{S_k}{\sqrt{k}\Gamma}) - \sqrt{2LLn}]  \le 0. 
\end{align}
and
\begin{align}\label{liminf normalized ge 0}
\liminf_{ n \rightarrow \infty} [\max_{1 \le k \le n} q( \frac{S_k}{\sqrt{k}\Gamma}) - \sqrt{2LLn}]  \ge 0. 
\end{align}
\end{seccor}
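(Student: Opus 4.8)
The plan is to treat the two inequalities separately, sending (\ref{limsup normalized le 0}) to the Ornstein--Uhlenbeck estimate of Theorem \ref{Y stationary} and (\ref{liminf normalized ge 0}) to Corollary \ref{limsup le 0 f0 stat} applied to a geometrically sparsified subsequence of the normalized sums. Three preliminary remarks will be used. (i) For every $k$, $V_k:=S_k/\sqrt k$ has law $\gamma$: for $f_1,\dots,f_d\in E^*$ the vector $(f_1(S_k),\dots,f_d(S_k))$ is centered Gaussian with covariance $k\,(\int_E f_if_j\,d\gamma)_{i,j}$, so $\mathcal L(S_k)=\mathcal L(\sqrt k\,G_1)$ and hence $\mathcal L(V_k)=\gamma$. (ii) $(G_1,G_2,\dots)$ is a centered Gaussian sequence on $E$, because a product of centered Gaussian measures on $E$ is a centered Gaussian measure on each $E^d$. (iii) The constant $\Gamma$ of (\ref{Gamma=sup}) satisfies $\Gamma=\sigma(\gamma)$: fixing $t_0\in[0,1]$ and composing $g\in E^*$ with the evaluation map $x\mapsto x(t_0)$ produces an element of $C_E^*[0,1]$, so the identity $\sup_{x\in K}h(x)=(\int_B h^2\,d\mu)^{1/2}$ used in the proof of Lemma \ref{link gamma sigma} gives $\sup_{x\in K}q(x(t_0))=\sup_{\|g\|^{*}\le1}(\int_E g^2\,d\gamma)^{1/2}=\sigma(\gamma)$; since $q(x(t_0))\le\sigma(\gamma)$ for every $x\in K$ and every $t_0$, while equality holds at $t_0=0$, taking the supremum over $t_0$ yields $\Gamma=\sigma(\gamma)$.

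For (\ref{limsup normalized le 0}), the assertion involves only the joint law of $(S_k)_{k\ge1}$, so we may assume $S_k=W(k)$, $k\ge1$, where $W$ is the sample continuous $\gamma$-generated Brownian motion (existence from \cite{gross-potential}); its integer-time increments are i.i.d.\ $\gamma$, so indeed $(W(k))_{k\ge1}\overset{d}{=}(S_k)_{k\ge1}$. Then with $Y(t)=e^{-t/2}W(e^t)$ as in (\ref{E-valued OU}) we have $V_k=k^{-1/2}W(k)=Y(\log k)$, whence
\[
\max_{1\le k\le n}q\!\Big(\tfrac{S_k}{\sqrt k\,\Gamma}\Big)=\max_{1\le k\le n}q\!\Big(\tfrac{Y(\log k)}{\Gamma}\Big)\le\sup_{0\le t\le\log n}q\!\Big(\tfrac{Y(t)}{\Gamma}\Big).
\]
Since $L(\log n)=LLn$ for all large $n$, applying (\ref{limsup Y le 0 continuous}) of Theorem \ref{Y stationary} with $T=\log n$ gives $\limsup_{n}[\max_{1\le k\le n}q(S_k/(\sqrt k\,\Gamma))-\sqrt{2LLn}]\le0$ with probability one, which is (\ref{limsup normalized le 0}).

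For (\ref{liminf normalized ge 0}), fix an integer $r\ge2$ and put $X_j:=V_{r^j}=r^{-j/2}S_{r^j}$, $j\ge1$. Using the independence of $S_{r^i}$ and $S_{r^j}-S_{r^i}$ for $i<j$ one gets $\mathrm{Cov}(X_i,X_j)=r^{-|i-j|/2}\mathrm{Cov}(G_1)$, so $\{X_j\}$ is a stationary sequence with $\mathcal L(X_j)=\gamma$. Passing from $q$ to $\tilde q:=q/\Gamma$ on $E$ (harmless by the Remark after Lemma \ref{link gamma sigma}, and arranging the associated constant to equal $1$), compactness of $K_{\gamma}$, the Hahn--Banach theorem and Lemma \ref{link gamma sigma} furnish $f_0\in E^*$ and $x_0\in K_{\gamma}$ with $\tilde q^{*}(f_0)=1$, $\tilde q(x_0)=1$, $f_0(x_0)=1$, so that $\sigma_{f_0}^2=\int_E f_0^2\,d\gamma=1$ by (\ref{f0 normalization}); moreover $\{f_0(X_j)\}$ is a stationary mean zero Gaussian sequence with $E[f_0(X_1)f_0(X_j)]=r^{-(j-1)/2}$, hence $(\log j)E[f_0(X_1)f_0(X_j)]\to0$. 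Corollary \ref{limsup le 0 f0 stat} now applies to $\{X_j\}$, and, undoing the normalization and invoking $\Gamma=\sigma(\gamma)$,
\[
\liminf_{m\to\infty}\Big[\max_{1\le j\le m}q\!\Big(\tfrac{V_{r^j}}{\Gamma}\Big)-\sqrt{2Lm}\Big]\ge0\qquad\text{with probability one.}
\]
Finally, if $k\le n$ has the form $r^j$ then $j\le m_n:=\lfloor\log_r n\rfloor$, so $\max_{1\le k\le n}q(S_k/(\sqrt k\,\Gamma))\ge\max_{1\le j\le m_n}q(V_{r^j}/\Gamma)$; since $Lm_n=LLn+O(1)$ we have $\sqrt{2Lm_n}-\sqrt{2LLn}\to0$, and the displayed liminf (which holds along $m=m_n\to\infty$) yields (\ref{liminf normalized ge 0}).

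The main obstacle is (\ref{limsup normalized le 0}): a direct union bound over $k\le n$ loses a logarithmic factor --- precisely the Darling--Erd\H os effect --- so the blocking built into the Ornstein--Uhlenbeck estimate of Theorem \ref{Y stationary} is genuinely needed. Apart from this, the only ideas are the substitution $S_k=W(k)$, $V_k=Y(\log k)$ used for the limsup, and the observation that a geometric subsampling turns the (non-stationary) sequence $\{V_k\}$ into a stationary Gaussian sequence with geometrically decaying correlations, so that Corollary \ref{limsup le 0 f0 stat} produces the liminf with the correct $\sqrt{2LLn}$ normalization.
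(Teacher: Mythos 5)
Your proposal is correct and follows essentially the same route as the paper: the limsup is obtained by realizing $S_k/\sqrt{k}$ as $Y(Lk)$ for the $\gamma$-generated Ornstein--Uhlenbeck process and invoking (\ref{limsup Y le 0 continuous}), and the liminf by geometric subsampling, which produces a scalar stationary mean zero Gaussian sequence with geometrically decaying correlations handled by Pickands' Theorem 3.3 (the paper does this directly with ratio $2$ and the functional $f_0$; you route it through Corollary \ref{limsup le 0 f0 stat}, which amounts to the same thing). Your direct verification that $\Gamma=\sigma(\gamma)=\Gamma_{\gamma}$ by interchanging the suprema over $K$ and over $t_0$ is a clean substitute for the paper's appeal to Remark \ref{Gamma=gamma}.
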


\begin{proof}[Proof of Theorem 4.1] If $Y=:\{Y(t): t \ge 0\}$ is sample continuous and as in (\ref{E-valued OU}), then Lemmas \ref{OU gaussian} and \ref{OU stationary} show $Y$ is a stationary, mean-zero Gaussian process in the sense of Definitions \ref{def gaussian} and \ref{def stationary}. Hence (a) in Theorem \ref{Y stationary} holds. In addition, Lemmas \ref{f(X) gaussian} and \ref{Xk gaussian} are then applicable and imply that the sample continuous processes $\{X_k(t): t \in [0,1]\}$ defined in (\ref{X_{k}}) are Gaussian in the sense of Definition \ref{def gaussian}. Moreover, since $\gamma$ is assumed non-degenerate they show that the mean zero Gaussian measures $\{\mu_k:k \ge 1\}$ induced on the Borel subsets of  $B$ are also non-degenerate and such that
\begin{align*}
\mathcal{L}(X_k)=\mu_k=\mu, k \ge 1.
\end{align*}
It also follows from Lemma \ref{f(X) gaussian} that for every $f \in C_E^{*}[0,1]$, $f(X)$ is a mean zero Gaussian random variable with variance as in (\ref{var f}) with $B=C_E[0,1]$. Therefore, (b) also holds. 

To prove (c) we first observe that $\Gamma< \infty$ since $K$ is a compact subset of $C_E[0,1]$, and it is strictly positive since $\mu$ is non-degenerate when we assume $\gamma$ is non-degenerate, which  implies the unit ball $K$ of the Hilbert space $H_{\mu}$ is non-degenerate. Combining (b) and $\Gamma$ as in (\ref{Gamma=sup})
we now have (\ref{limsup Y le 0 continuous}) with probability one by (\ref{limsup Y le 0}) in Corollary \ref{main with OU}. Finally, (\ref{liminf Y ge 0}) holds with probability one from (\ref{liminf Y ge 0 corr}) in Corollary \ref{main with OU} since we can check (\ref{correl decay}).
That is, for $x_0 \in K$ such that $\|x_0\|=\Gamma$ the Hahn-Banach theorem implies there is a linear functional $f_0 \in B^*$ such that $f_0(x_0) =  \Gamma$, $\|f_0\|^{*}=1$ and Lemma \ref{link gamma sigma} implies
\begin{align*}
\sigma_{f_0}^{2} =\int_B f_0^{2}(x)d\mu(x) =\Gamma^2.
\end{align*}
Furthermore, Lemma \ref{f(Xk) stationary} implies the sequence of mean zero random variables $\{f_0(X_k): k \ge 1\}$ is stationary with variance $\Gamma^2$ and Lemma \ref{decay rate lemma} shows
\begin{align}
\lim_{n \rightarrow \infty} ( \log_{e}n)E[f_0(X_1)f_0(X_n)] = 0, 
\end{align}
which when combined with Corollary \ref{main with OU} completes the proof of Theorem \ref{Y stationary}.
\end{proof}

\begin{proof}[Proof of Corollary \ref{gaussian D-E}] Since $Y=\{Y(t):t \ge 0\}$ is as in (\ref{E-valued OU}) with $\{W(t): t \ge 0\}$ the $E$-valued sample continuous Brownian motion induced by $\gamma$, it follows that the sequences 
$\{G_k: k \ge\}$ and $\{W(k)-W(k-1): k \ge1\}$ with $W(0)=0$ have the same law. Therefore,  the $E^{\infty}$-valued random vectors
\begin{align}
(G_1,\frac{G_1+G_2}{\sqrt{2}},\cdots)~{\rm{and}}~(W(1),\frac{W(2)}{\sqrt{2}},\cdots) 
\end{align}
have the same law, and assuming without loss of generality that $G_k=W(k)-W(k-1)$ for all $k \ge 1$ we have $W(k)/\sqrt{k}= Y(Lk)$ with probability one for all $k \ge 1$. Hence with probability one for all $n\ge 1$ we have
\begin{align}
\max_{1 \le k \le n} q(\frac{G_1+\cdots+G_k}{\sqrt{k}\Gamma})=\max_{1 \le k \le n}q(\frac{ Y(Lk)}{\Gamma})
\end{align}
which implies with probability one that
\begin{align}
\limsup_{n \rightarrow \infty}[\max_{1 \le k \le n} &q(\frac{G_1+\cdots+G_k}{\sqrt{k}\Gamma}) - \sqrt{2LLn}]=\limsup_{n \rightarrow \infty}[\max_{1 \le k \le n} q(\frac{Y(Lk)}{\Gamma}) - \sqrt{2LLn}] \\
&\le \limsup_{n \rightarrow \infty}[\max_{1 \le t \le Ln} q(\frac{Y(t)}{\Gamma}) - \sqrt{2LLn}] \le 0,\notag
\end{align}
where the inequality follows immediately from (\ref{limsup Y le 0 continuous}) with $T=Ln$. Hence (\ref{limsup normalized le 0}) holds, and we now turn to (\ref{liminf normalized ge 0}).

Let $\Gamma$ be as in (\ref{Gamma=sup}) and $\Gamma_{\gamma}= \sup_{z \in K_{\gamma}}q(z)$, where $K_{\gamma}=\{z: \|z\|_{H_{\gamma}} \le 1\}$ is the unit ball of the Hilbert space $H_{\gamma}$.  Then Lemma \ref{link gamma sigma} implies $\Gamma= \sigma(\mu)$, where $\mu = \mathcal{L}(X_1)$,  and hence by (\ref{decay rate formula six}) we have
\begin{align*}
\Gamma \le \sigma(\gamma)= \Gamma_{\gamma}.
\end{align*}
Furthermore, by the conclusion of Remark \ref{Gamma=gamma} we have 
\begin{align}\label{Gamma=Gamma gamma}
\Gamma = \Gamma_{\gamma},
\end{align}
and applying Lemma \ref{link gamma sigma} to the mean zero Gaussian measure $\gamma$ on $E$, there exists $f_0 \in E^*$, $ \|f_0\|^*=1$, $ f_0(z_0)=\Gamma_{\gamma}$ ($z_0$ as above), such that
\begin{align}
\int_E f_0^2(z) d\gamma(z)= \Gamma_{\gamma}^2. 
\end{align}

Since $\Gamma = \Gamma_{\gamma}$ and $S_k =G_1+\cdots +G_k$ for $k\ge 1$,
\begin{align}\label{max q ge max f0}
\max_{1 \le k \le n}q(\frac{S_k}{\sqrt{k}\Gamma})\ge \max_{1 \le k \le n}\frac{|f_0(S_k)|}{\sqrt{k}\Gamma_{\gamma}}
\ge  \max_{0 \le j \le j_n}|Y_j|, 
\end{align}
where $ j_n= \max\{j: 2^{j} \le n\}$ for $n\ge 1$ and 
\begin{align*}
Y_j= \frac{f_0(G_1) + \cdots+ f_0(G_{2^{j}})}{\sqrt{2^{j}}\Gamma_{\gamma}}, j\ge 1.
\end{align*}
Therefore, $E(Y_j)=0, E(Y_j^2)=1$, and for $0\le i \le j < \infty$
\begin{align*}
E(Y_iY_j) = \frac{E(f_0^2(G_1)) 2^i}{\Gamma_{\gamma}^2\sqrt{2^{(i+j)}}}= 2^{-(j-i)/2},
\end{align*}
which implies  $\{Y_j: j \ge 0\}$ is a mean zero-variance one  stationary Gaussian sequence of real random variables with
\begin{align*}
r_k =E(Y_0Y_k) = e^{-k/2},
\end{align*}
and hence Theorem 3.3 in \cite{pickands} implies 
\begin{align}\label{liminf maxY ge 0}
\liminf_{k \rightarrow \infty}[\max_{0 \le j \le k} Y_j - \sqrt{2Lk}] \ge 0 
\end{align}
with probability one.

From (\ref{max q ge max f0}) we have
\begin{align}\label{liminf max normalized ge +epsilon n}
\liminf_{n \rightarrow \infty}[\max_{1 \le k \le n} q( \frac{S_k}{\sqrt{k}\Gamma})- \sqrt{2LLn}] \ge \liminf_{n \rightarrow \infty}[  \max_{0 \le j \le j_n} Y_j - \sqrt{2Lj_n} +\epsilon_n] 
\end{align}
where $n \rightarrow \infty$ implies
\begin{align*}
|\epsilon_n| = |\sqrt{2Lj_n} - \sqrt{2LLn}| \rightarrow 0.
\end{align*}
Therefore, (\ref{liminf maxY ge 0}) combined with (\ref{liminf max normalized ge +epsilon n}) implies  (\ref{liminf normalized ge 0}) with probability one, and the corollary is proved. 
\end{proof}

\subsection{An Application to Random operators}\label{random operators} 
Let $H$ be a separable Hilbert space over the complex numbers with inner product $\langle x, y \rangle, x,y \in H, $ and for a bounded operator $A$ from $H$ to $H$ denote the uniform operator norm by
\begin{align*}
q(A)=:\sup_{h \in H, \langle h, h \rangle =1} \langle Ah, Ah \rangle^{\frac{1}{2}}.
\end{align*}
In this sub-section we assume $E$ is a separable Banach space over the real numbers consisting of bounded self-adjoint operators 
with norm $q(\cdot)$, and that $\gamma$ is a mean zero Gaussian measure on the Borel subsets of $(E, q)$. The Hilbert space generating $\gamma$ will be denoted by $H_{\gamma}$, its unit ball by $K_{\gamma}$, and
\begin{align}\label{Gamma sigma=sup q}
\Gamma_{\gamma}=: \sup_{z \in K_{\gamma}} q(z).
\end{align}
Some standard facts about elements of $E$ are as follows. If $A$ is self-adjoint, then the spectrum of $A$, denoted by $\sigma(A)$, is a compact non-empty subset of real numbers with spectral radius 
\begin{align}
r_{\sigma}(A)=: \sup\{|\lambda|: \lambda \in \sigma(A)\}, 
\end{align}
and
\begin{align}\label{rsigma=q}
r_{\sigma}(A)= q(A). 
\end{align}
If $A$ is a compact self-adjoint operator from $H$ to $H$, then $\sigma(A)$ is a countable set of real numbers consisting of the eigenvalues of $A$ and
\begin{align*}
\sigma(A) \cap ((-\infty,\infty)- \{0\}) = \sigma(A)^{+} \cup \sigma(A)^{-},
\end{align*}
where $\sigma(A)^{+}$ denotes the strictly positive eigenvalues of $A$ and $\sigma(A)^{-}$ is the strictly negative eigenvalues of $A$. Furthermore, zero may or may not be an eigenvalue of $A$, but if $0 \notin \sigma(A)$ and $H$ is infinite dimensional, then it is always a limit point of either $\sigma(A)^{+}$ or $\sigma(A)^{-}$. 

Perhaps somewhat less well known are the following facts, and hence a detailed summary  appears in  the appendix \cite{partial-max-appendix}. 

For compact self-adjoint operators on the infinite dimensional Hilbert space $H$, let $\Sigma$ be the set-valued map on these operators defined by
\begin{align*} 
\Sigma(A) = \sigma(A),
\end{align*}
and define the Hausdorff metric  distance between $\sigma(A)$ and $ \sigma(B)$ by
\begin{align}\label{dsigmaAsigmaB}
d(\sigma(A), &\sigma(B))
=: \inf\{\delta>0: \sigma(A) \subseteq \sigma(B) +(-\delta,\delta)\\
& \textrm{and}\, \sigma(B) \subseteq \sigma(A) +(-\delta,\delta)\}. \notag
\end{align}
Then, for $A,B$ compact self-adjoint operators on $H$ it is known that
\begin{align}\label{q unif contin}
q(A-B)< \delta ~{\rm{implies}}~~d(\sigma(A), \sigma(B))<\delta. 
\end{align}

For a proof of (\ref{q unif contin}) see Theorem 3 in \cite{partial-max-appendix}. In particular, if $E$ consists of compact self-adjoint operators on $H$ and
\begin{align*}
E_{\sigma} = \{ \sigma(A): A \in E\}
\end{align*}
with distance on $E_{\sigma}$ the Hausdorff metric in (\ref{dsigmaAsigmaB}), then the map $\Sigma: A\rightarrow \sigma(A)$ is a Lip-1 continuous map from $(E,q)$ onto $(E_{\sigma}, d)$ since
(\ref{q unif contin}) implies
\begin{align*}
d(\Sigma(A), \Sigma(B)) \le 2q(A-B).
\end{align*}
Thus for $(E,q)$ a Banach space of compact self-adjoint operators and $\{A_n: n \ge 1\}$ a sequence in $E$ and $A \in E$ 

\begin{align}\label{q(A-An)}
\lim_{n \rightarrow \infty} q&(A-A_n)=0~{\rm {implies}}\\
&\lim_{n \rightarrow \infty} d(\Sigma(A),\Sigma(A_n))=\lim_{n \rightarrow \infty}d(\sigma(A), \sigma(A_n))= 0.  \notag
\end{align}
As usual in any metric space $(M,\rho)$, for $x \in D, D \subseteq M$, we define $\rho(x,D) = \inf_{a \in D} \rho(x,a)$ and the cluster set $C(\{x_n\})$ to be the set of all limit points of the sequence $\{x_n\} \subseteq M$ taken in $(M,\rho)$. Thus for $(E,q)$ a Banach space of compact self-adjoint operators, $\{A_n: n \ge 1\}$ a sequence in $E$, and $D \subseteq E$ we have that
\begin{align}\label{convergence of sigma}
\lim_{n \rightarrow \infty} &q(A_n,D)=0~{\rm {implies}}\\
& \lim_{n \rightarrow \infty} d(\Sigma(A_n),\Sigma(D))= \lim_{n \rightarrow \infty}\inf_{a \in D}d(\sigma(A_n), \sigma(a))= 0.  \notag
\end{align}
Moreover, if
\begin{align*}
E_{\sigma} = \Sigma(E)=\{ \sigma(A): A \in E\}
\end{align*}
with distance on $E_{\sigma}$ the Hausdorff metric in (\ref{dsigmaAsigmaB}), then for compact subsets $D$ of $(E,q)$
\begin{align}\label{qAn}
\lim_{n \rightarrow \infty} &q(A_n,D)=0~{\rm {and }}~~C(\{A_n\})=D &\ \textrm{implies}\\
&C(\{\Sigma(A_n)\})=\Sigma(D), \notag
\end{align}
where the cluster set $C(\{\Sigma(A_n)\})$ is computed relative to $(E_{\sigma},d)$.

From these standard facts it follows rather easily that our results have implications for the spectrums of random operators with centered Gaussian distribution $\gamma$ on $E$. As a sample we will present applications of \ref{lim Mn} in Corollary \ref{limsup le 0 f0 stat} and \ref{gaussian D-E}. The interested reader should then envision others. 

\begin{seccor} Let $E$ be a separable Banach space over the real numbers consisting of bounded self-adjoint operators on the Hilbert space $H$ in the uniform operator norm $q(\cdot)$, and that $\gamma$ is a mean zero Gaussian measure on the Borel subsets of $(E,q)$ with $ \Gamma_{\gamma}$ as in (\ref{Gamma sigma=sup q}). If $A_1,A_2, \dots $ are i.i.d. $E$-valued random operators with  law $\gamma$ and  $S_k=A_1+\cdots A_k$ for $k \ge 1$, then with probability one 
\begin{align}\label{lim max rsigma}
\lim_{n \rightarrow \infty} [\max_{1 \le k \le n} \frac{r_{\sigma}(A_k)}{\Gamma_{\gamma}} -\sqrt{2Ln}] =0 
\end{align}
and
\begin{align}\label{lim max normalized rsigma}
\lim_{ n \rightarrow \infty} [\max_{1 \le k \le n} \frac{r_{\sigma}(S_k)}{\sqrt{k}\Gamma_{\gamma}} - \sqrt{2LLn}]  = 0. 
\end{align}
In addition, if we assume the operators are compact and $d(\cdot,\cdot)$ is the Hausdorff metric on $E_{\sigma}$, then with probability one 
\begin{align}\label{d(sigma)}
\lim_{n \rightarrow \infty}d(\frac{\sigma(A_n)}{\sqrt{2Ln}}, \mathcal{K})=0, 
\end{align}
and
\begin{align}\label{C(sigma)=}
C(\{\frac{\sigma(A_n)}{\sqrt{2Ln}}\} ) = \mathcal{K}, 
\end{align}
where
\begin{align}\label{mathcal K=}
\mathcal{K}=: \Sigma(K_{\gamma})=\{\sigma(A): A \in K_{\gamma}\}. 
\end{align}
With probability one we also have
\begin{align}\label{distance normalized mathcal K}
\lim_{n \rightarrow \infty}d(\frac{\sigma(S_n)}{\sqrt{2nLLn}}, \mathcal{K})=0, 
\end{align}
and
\begin{align}\label{C=mathcal K}
C(\{\frac{\sigma(S_n)}{\sqrt{2nLLn}}\} ) = \mathcal{K}. 
\end{align}
\end{seccor}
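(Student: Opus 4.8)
The plan is to treat the six conclusions in two groups. First, the numerical limits \eqref{lim max rsigma} and \eqref{lim max normalized rsigma} are nothing but restatements of the already-established results once we identify $r_\sigma(\cdot)$ with $q(\cdot)$ via \eqref{rsigma=q}. Precisely, since $A_1,A_2,\dots$ are i.i.d.\ with law $\gamma$ on $(E,q)$ and $\Gamma_\gamma=\sup_{z\in K_\gamma}q(z)$, applying \eqref{lim Mn} of Corollary \ref{limsup le 0 f0 stat} to the norm $q(\cdot)/\Gamma_\gamma$ (so that the normalized $\Gamma$ equals $1$) gives $\lim_n[\max_{k\le n}q(A_k)/\Gamma_\gamma-\sqrt{2Ln}]=0$ a.s., and \eqref{rsigma=q} turns this into \eqref{lim max rsigma}. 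Likewise, applying Corollary \ref{gaussian D-E} with $E$ the operator space and the norm $q$, together with $r_\sigma(S_k)=q(S_k)$, yields \eqref{lim max normalized rsigma}. So this first group is essentially immediate; I would state it in a sentence or two.

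The second group, \eqref{d(sigma)}--\eqref{C=mathcal K}, is where the spectral continuity facts \eqref{q(A-An)}--\eqref{qAn} enter. Here one must restrict to compact self-adjoint operators, so that the Hausdorff metric $d$ on $E_\sigma$ and the Lip-$1$ property $d(\Sigma(A),\Sigma(B))\le 2q(A-B)$ are available. The strategy is: invoke the clustering/convergence results for i.i.d.\ Gaussian vectors from \cite{goodman-kuelbs-weakly} (the same input used in the proof of Theorem \ref{main with lim}) to get, with probability one,
\begin{align*}
\lim_{n\to\infty} q\Big(\frac{A_n}{\sqrt{2Ln}},K_\gamma\Big)=0 \quad\text{and}\quad C\Big(\Big\{\frac{A_n}{\sqrt{2Ln}}\Big\}\Big)=K_\gamma,
\end{align*}
and then push these forward through the continuous set-valued map $\Sigma$ using \eqref{convergence of sigma} and \eqref{qAn} (with $D=K_\gamma$, which is compact in $(E,q)$). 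This gives \eqref{d(sigma)} and \eqref{C(sigma)=} directly, since $\Sigma(K_\gamma)=\mathcal K$ by \eqref{mathcal K=} and $d(\sigma(A_n)/\sqrt{2Ln},\mathcal K)=d(\Sigma(A_n/\sqrt{2Ln}),\Sigma(K_\gamma))$ by homogeneity of the spectrum under scaling. For the partial-sum versions \eqref{distance normalized mathcal K} and \eqref{C=mathcal K}, I would use the identification (as in the proof of Corollary \ref{gaussian D-E}) $S_k/\sqrt{k}\stackrel{d}{=}W(k)/\sqrt{k}=Y(Lk)$ along a Brownian/Ornstein-Uhlenbeck coupling, or more simply apply the classical compact LIL for i.i.d.\ Gaussian vectors in $(E,q)$ to obtain $\lim_n q(S_n/\sqrt{2nLLn},K_\gamma)=0$ and $C(\{S_n/\sqrt{2nLLn}\})=K_\gamma$ a.s., and then again transport through $\Sigma$ via \eqref{convergence of sigma} and \eqref{qAn}, using the scaling $\sigma(S_n/\sqrt{2nLLn})=\sigma(S_n)/\sqrt{2nLLn}$.

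The main obstacle is making sure the hypotheses of the transport lemmas \eqref{convergence of sigma} and \eqref{qAn} are genuinely met: \eqref{qAn} requires $D$ to be a compact subset of $(E,q)$ and requires the exact cluster-set equality $C(\{A_n/\sqrt{2Ln}\})=K_\gamma$ (not merely $\limsup$-type containment), so I must cite the correct form of the clustering theorem for i.i.d.\ Gaussian vectors — compactness of $K_\gamma$ in $(E,q)$ is guaranteed by the general Gaussian theory recalled after \eqref{Sf}. A secondary point to verify carefully is that scaling by a positive constant $c$ sends $\sigma(A)$ to $c\,\sigma(A)$ and that this commutes with the Hausdorff metric, i.e.\ $d(c\,\sigma(A),c\,\sigma(B))=c\,d(\sigma(A),\sigma(B))$; this is elementary but needs to be noted so that the normalizations $\sqrt{2Ln}$ and $\sqrt{2nLLn}$ can be moved inside $\Sigma$. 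Once these bookkeeping points are settled, all six conclusions follow with no further computation.
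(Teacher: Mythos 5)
Your plan is correct and follows essentially the same route as the paper: \eqref{lim max rsigma} and \eqref{lim max normalized rsigma} from \eqref{lim Mn}, Corollary \ref{gaussian D-E} and \eqref{rsigma=q}, then the convergence-to-$K_\gamma$ and cluster-set results for $A_n/\sqrt{2Ln}$ (the paper cites \cite{goodman-kuelbs-weakly} for the former and Th\'eor\`eme 4.1 of \cite{carmona-kono} for the exact cluster-set equality) and the compact LIL of \cite{lil-banach-special} for $S_n/\sqrt{2nLLn}$, all transported through the Lip-$1$ map $\Sigma$. Your bookkeeping points (compactness of $K_\gamma$, homogeneity of $\sigma(\cdot)$ and of the Hausdorff metric under scaling) are exactly the ones the paper uses implicitly.
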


\begin{proof} The conclusion in (\ref{lim max rsigma}) follows immediately from (\ref{lim Mn}) and (\ref{rsigma=q}). Similarly, (\ref{lim max normalized rsigma}) follows from Corollary \ref{gaussian D-E}, (\ref{rsigma=q}), and that $r_{\sigma}(\frac{S_k}{\sqrt{k}})= \frac{r_{\sigma}(S_k)}{\sqrt{k}}$. 

To verify (\ref{d(sigma)})\, and (\ref{C(sigma)=}) we observe from Theorem 1 in \cite{goodman-kuelbs-weakly} or Theorem 2.1 in \cite{goodman-kuelbs-clustering-ss} for every $\epsilon>0$ and $\epsilon_n=\epsilon/\sqrt{2Ln}$ that with probability one
\begin{align}\label{K gamma +epsilon n eventually}
&P( \frac{A_n}{\sqrt{2Ln}} \in K_{\gamma}+\epsilon_nU~ {\rm{eventually}}) =1, 
\end{align}
where $U=\{A\in E:q(A)<1\}$. Thus with probability one we have $ \frac{A_n}{\sqrt{2Ln}}$ converging to $K_{\gamma}$ in $(E,q)$, and
\begin{align}\label{q K gamma eventually}
P(q( \frac{A_n}{\sqrt{2Ln}},K_{\gamma}) < \epsilon_n~{\rm { eventually}})=1.
\end{align}
Since $\Sigma$ is Lip-1 as above and (\ref{mathcal K=}) holds, then

\begin{align*}
\{d( \frac{\Sigma(A_n)}{\sqrt{2Ln}},\mathcal{K}) < 2\epsilon_n~{\rm { eventually}}\} \supseteq \{q(\frac{A_n}{\sqrt{2Ln}}, K_{\gamma}) < \epsilon_n~{\rm{eventually}} \},
\end{align*}
and hence we have (\ref{d(sigma)}) with probability one.

To verify (\ref{C(sigma)=}) we first observe Th\'eor\`eme 4.1 of \cite{carmona-kono} implies
\begin{align}\label{C= K gamma=1}
P(C(\{\frac{A_n}{\sqrt{2Ln}}\} ) = K_{\gamma})=1, 
\end{align}
and since $K_{\gamma}$ is compact in $E$, (\ref{q K gamma eventually}) and (\ref{C= K gamma=1})  combined with (\ref{q(A-An)}) gives (\ref{C(sigma)=})  with probability one. 

Finally, the proof of (\ref{distance normalized mathcal K}) and (\ref{C=mathcal K}) follow as that for (\ref{d(sigma)}) and (\ref{C(sigma)=}) since Theorem 4.1 in \cite{lil-banach-special} implies the law of the iterated logarithm for the i.i.d. centered $E$-valued Gaussian  random vectors $\{A_k: k \ge 1\}$, and hence
\begin{align}\label{q An=1}
P(\lim_{n \rightarrow \infty}q( \frac{S_n}{\sqrt{2nLLn}},K_{\gamma}) =0) = P(C(\{\frac{S_n}{\sqrt{2nLLn}}\} ) = K_{\gamma})=1.
\end{align}
Thus the corollary is proved.
\end{proof}

\begin{secrem}\label{loglog implications} Since $\epsilon_n =\epsilon/\sqrt{2Ln}$ the statements in 
(4.32-33)
imply 
\begin{align*}
P(\lim_{n \rightarrow \infty} q(A_n, \sqrt{2Ln}K_{\gamma})=0)=1~{\rm{and}}~P( \lim_{n \rightarrow \infty}d( \Sigma(A_n), \sqrt{2Ln} \mathcal{K}) =0)=1.
\end{align*}
\end{secrem}

\subsection{An Application to Medians for Gaussian Maximal Functions}\label{median centerings} 
Next we establish a symmetrization result, and indicate how it provides precise asymptotic behavior for the medians of the partial maxima of Gaussian vectors when combined with our results above.

\begin{prop}\label{centering}Assume $x_{n}$ is a real-valued random variable with unique median, $m_{n}$, and $x_{n}^{'}$ is an independent copy of $x_{n}$. Then,
\begin{align}\label{probab}
&x_{n}-x_{n}^{'}\to 0\  \text{\rm in probability implies } x_{n}-m_{n}\to 0 \ \text{\rm in probability, and}
\end{align}
 \begin{align}\label{a.s.convergence}
&x_{n}-x_{n}^{'}\to 0\ \text{\rm a.s. implies }\, x_{n}-m_{n}\to 0\ \text{\rm a.s.}
\end{align}

\end{prop}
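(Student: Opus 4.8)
The plan is to exploit the standard symmetrization inequalities relating a random variable to its symmetrization $x_n - x_n'$, but with the median playing the role usually played by zero. For part (\ref{probab}), fix $\ve > 0$ and recall the elementary fact that for a random variable $x$ with median $m$ and an independent copy $x'$, one has $\Pr(|x - m| > \ve) \le 2\Pr(|x - x'| > \ve)$; this is the usual ``weak symmetrization'' bound, proved by conditioning on $x$ and using that $\Pr(x' \le m) \ge 1/2$ and $\Pr(x' \ge m) \ge 1/2$. Applying this with $x = x_n$, $m = m_n$ gives $\Pr(|x_n - m_n| > \ve) \le 2\Pr(|x_n - x_n'| > \ve) \to 0$, which is exactly (\ref{probab}).

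For part (\ref{a.s.convergence}), the plan is to pass from a.s.\ convergence of $x_n - x_n'$ to a.s.\ convergence of $x_n - m_n$ by a Borel–Cantelli / subsequence argument rather than a direct pathwise one, since the medians $m_n$ are deterministic and no pathwise relation to $x_n - x_n'$ is available. First I would observe that $x_n - x_n' \to 0$ a.s.\ implies, for every $\ve > 0$, that $\Pr(\sup_{k \ge n} |x_k - x_k'| > \ve) \to 0$ as $n \to \infty$. Then I would want a ``maximal'' version of the symmetrization inequality: something like $\Pr(\sup_{k \ge n} |x_k - m_k| > \ve) \le C\,\Pr(\sup_{k \ge n} |x_k - x_k'| > \ve)$ for a universal constant $C$ (morally $C = 2$, as in the one-variable case, using that the $x_k'$ are mutually independent of the $x_k$ and that each $m_k$ is a median). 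Granting such an inequality, the right-hand side tends to $0$, so $\sup_{k \ge n}|x_k - m_k| \to 0$ in probability; since this supremum is monotone nonincreasing in $n$, convergence in probability to $0$ upgrades to a.s.\ convergence to $0$, which is (\ref{a.s.convergence}).

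The main obstacle I anticipate is establishing the maximal symmetrization inequality cleanly, i.e.\ controlling $\sup_{k \ge n} |x_k - m_k|$ rather than a single $|x_k - m_k|$. The one-variable bound conditions on $x_k$ alone, but for the supremum one must condition on the whole sequence $(x_k)_{k \ge 1}$ simultaneously and then use independence of the copies $(x_k')_{k\ge1}$ together with the median property of each $m_k$; a Lévy-type argument works here provided one is careful about which event gets the factor of $2$. An alternative route that sidesteps the maximal inequality: use that a.s.\ convergence is equivalent to convergence along every subsequence in probability together with a diagonal extraction — or more simply, note $x_n - x_n' \to 0$ a.s.\ gives, via Part (\ref{probab}) applied after extracting a subsequence, that along a full-measure event $x_n - m_n$ has all its subsequential limits equal to $0$; but making this precise for deterministic $m_n$ still essentially requires the maximal estimate, so I would push on the Lévy-type inequality as the cleanest path. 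The uniqueness-of-median hypothesis is used to guarantee $m_n$ is well-defined and, more importantly, enters if one wants the symmetrization constant to be exactly controlled; it can likely be relaxed but I would keep it as stated.
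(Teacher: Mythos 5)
Your proof of (\ref{probab}) is essentially the paper's own argument: split into the two tails, insert the factor $2\Pr(x_{n}'\le m_{n})\ge 1$ (resp.\ $2\Pr(x_{n}'\ge m_{n})\ge 1$) using independence, and dominate by $2\Pr(|x_{n}-x_{n}'|>\ve)$. For (\ref{a.s.convergence}) you take a genuinely different route. The paper avoids any maximal inequality: since the two sequences live on a product space, Fubini gives a fixed $\omega'$ with $x_{n}-x_{n}(\omega')\to 0$ a.s.; setting $b_{n}=x_{n}(\omega')$, part (\ref{probab}) yields $x_{n}-m_{n}\to 0$ in probability, hence $b_{n}-m_{n}\to 0$ in probability, hence $b_{n}-m_{n}\to 0$ as a sequence of constants, and then $x_{n}-m_{n}=(x_{n}-b_{n})+(b_{n}-m_{n})\to 0$ a.s. Your route instead goes through the maximal symmetrization bound $\Pr(\sup_{k\ge n}|x_{k}-m_{k}|>\ve)\le 2\Pr(\sup_{k\ge n}|x_{k}-x_{k}'|>\ve)$ followed by monotonicity of the supremum in $n$; the inequality you flag as the main obstacle does hold with constant $2$ and is easy once you condition on the whole sequence $(x_{j})_{j\ge 1}$: on the event $\{\sup_{k\ge n}|x_{k}-m_{k}|>\ve\}$ let $\tau=\inf\{k\ge n: |x_{k}-m_{k}|>\ve\}$, which is measurable in the $x$-sequence and finite there; conditionally on that sequence, $x_{\tau}'$ still has median $m_{\tau}$, so with conditional probability at least $1/2$ it lies on the side of $m_{\tau}$ forcing $|x_{\tau}-x_{\tau}'|>\ve$, and integrating gives the bound. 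Note that both your argument and the paper's use the (surely intended) reading of the hypothesis that the sequence $(x_{n}')$ is an independent copy of the sequence $(x_{n})$, not merely that each pair is independent --- yours to decouple $x_{\tau}'$ from the random index $\tau$, the paper's to apply Fubini. The Fubini trick buys brevity and dispenses with any maximal estimate; your route buys an explicit quantitative tail bound on $\sup_{k\ge n}|x_{k}-m_{k}|$ that the paper's argument does not produce.
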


\begin{proof}We'll use $2\Pr( x_{n}^{'}\le m_{n})=1$ and $2\Pr( x_{n}^{'}\ge m_{n})=1$. Fix $\epsilon>0$.
\begin{align*}\Pr(x_{n}&>m_{n}+\epsilon)=\big[2\Pr( x_{n}^{'}\le m_{n})\big] \Pr(x_{n}>m_{n}+\epsilon)\\
&= 2\Pr(x_{n}>m_{n}+\epsilon, x_{n}^{'}\le m_{n})\le 2\Pr(x_{n}>x_{n}^{'}+\epsilon, x_{n}^{'}\le m_{n})
\notag\\
&\le 2\Pr(x_{n}>x_{n}^{'}+\epsilon)\to 0.
\notag
\end{align*}
Similarly, 
\begin{align*}\Pr(x_{n}&<m_{n}-\epsilon)=\big[2\Pr( x_{n}^{'}\ge m_{n})\big] \Pr(x_{n}<m_{n}-\epsilon)\\
&= 2\Pr(x_{n}<m_{n}-\epsilon, x_{n}^{'}\ge m_{n})\le 2\Pr(x_{n}< x_{n}^{'}-\epsilon, x_{n}^{'}\ge m_{n})
\notag\\
&\le 2\Pr(x_{n}<x_{n}^{'}-\epsilon)\to 0.
\notag
\end{align*}
Hence ( \ref{probab}) holds.

Now assume $x_{n}-x_{n}^{'}\to 0$, a.s. Then, 
by Fubini's theorem there is a particular ``$\omega^{'}$'' such that $x_{n}-x_{n}(\omega^{'})\to 0$, a.s. and letting $b_{n}=x_{n}(\omega^{'})$ we have, $x_{n}-b_{n}\to 0$ in probability. Since, the assumption also implies $x_{n}-x_{n}^{'}\to 0$ in probability, we have by (\ref{probab}) that $x_{n}-m_{n}\to 0$  in probability. Therefore, $b_{n}-m_{n}\to 0$ in probability. Since these last are constants, we have $b_{n}-m_{n}\to 0$. Going back to $x_{n}-b_{n}\to 0$ a.s., we get $x_{n}-m_{n}\to 0$, a.s. 
\end{proof}

\begin{prop}\label{medians via symm} Let  $\mu, \mu_1,\mu_2,\cdots$ be  centered non-degenerate Gaussian measures on 
$B$ with norm $\|\cdot\|$, and $X,X_1,X_2,\cdots$ be $B$-valued random vectors on some probability space with distributions $\mu, \mu_1,\mu_2,\cdots$ such that $\{\mu_n: n \ge 1\}$ converges weakly to $\mu$ on $B$. In addition, assume $\{b_n\}$ are numbers such that
\begin{align}\label{convergence assumed}
\lim_{n \rightarrow \infty}[M_n - b_n] = 0
\end{align}
where the convergence to zero is in probability, and
\begin{align*}
\Gamma= \sup_{x\in K} \|x\|~{\rm{and}}~M_n=: \max_{1 \le k \le n}\frac{\|X_k\|}{\Gamma}.
\end{align*}
Then, 
\begin{align}\label{median precision}
\lim_{n \rightarrow \infty}[med(M_n)-b_n]=0.
\end{align}
\end{prop}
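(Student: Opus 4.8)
There are two natural ways to prove this, and the shortest is entirely elementary: with $b_n$ non-random, the medians of $M_n$ must track any centering about which $M_n$ concentrates. I would argue as follows. Let $m_n$ be any median of $M_n$ and fix $\epsilon>0$. By $(\ref{convergence assumed})$ there is an $n_0$ with $P(|M_n-b_n|>\epsilon)<1/2$ for $n\ge n_0$. If $m_n>b_n+\epsilon$, then $\{M_n\ge m_n\}\subseteq\{M_n-b_n>\epsilon\}$ forces $P(M_n\ge m_n)<1/2$, contradicting that $m_n$ is a median; symmetrically, $m_n<b_n-\epsilon$ forces $P(M_n\le m_n)<1/2$, again a contradiction. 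Hence $|m_n-b_n|\le\epsilon$ for $n\ge n_0$, and since $\epsilon>0$ is arbitrary, $(\ref{median precision})$ follows. This uses only that $b_n$ is non-random and that $M_n-b_n\to 0$ in probability; neither the Gaussian hypotheses nor the weak convergence $\mu_n\to\mu$ play a role.

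Alternatively --- and this is presumably the route the paper has in mind, since Proposition \ref{centering} immediately precedes it --- one may argue through symmetrization. On an enlarged probability space choose $X_1',X_2',\dots$ with $\mathcal{L}(X_k')=\mu_k$ and $(X_k')_{k\ge1}$ independent of $(X_k)_{k\ge1}$, and set $M_n'=\max_{1\le k\le n}\|X_k'\|/\Gamma$, an independent copy of $M_n$. Since $b_n$ is non-random, $(\ref{convergence assumed})$ gives $M_n'-b_n\to 0$ in probability as well, hence $M_n-M_n'\to 0$ in probability, and $(\ref{probab})$ of Proposition \ref{centering} yields $M_n-\med(M_n)\to 0$ in probability. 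Then $\med(M_n)-b_n=(M_n-b_n)-(M_n-\med(M_n))$ is a deterministic sequence converging to $0$ in probability, hence converging to $0$, which is $(\ref{median precision})$.

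In the symmetrization route the one point requiring care --- the main obstacle there --- is verifying that $M_n$ has a unique median, so that Proposition \ref{centering} applies and $\med(M_n)$ is unambiguous. Non-degeneracy of $\mu$ gives $\Gamma=\sup_{x\in K}\|x\|>0$, so $M_n$ is well defined; non-degeneracy of each $\mu_k$ gives $M_n>0$ a.s.; and since the norm of a non-degenerate centered Gaussian vector on a separable Banach space has an atomless distribution (in fact, one absolutely continuous on $(0,\infty)$), one gets $P(M_n=t)\le\sum_{k=1}^{n}P(\|X_k\|/\Gamma=t)=0$ for every $t$, so that $F_{M_n}$ is continuous. When the $X_k$ are i.i.d., or jointly Gaussian --- as for the Ornstein-Uhlenbeck process, $X_k(t)=Y(t+(k-1))$ as in $(\ref{X_{k}})$ --- $F_{M_n}$ is moreover strictly increasing wherever it lies strictly between $0$ and $1$ (the support of $M_n$ is an interval of the form $[c,\infty)$, on which $F_{M_n}$ is strictly increasing), so $F_{M_n}(t)=1/2$ has a single root and the median is unique. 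Since the direct route above avoids this issue entirely, I would present it as the primary argument and retain the symmetrization computation as the conceptual link to Proposition \ref{centering} used elsewhere in this section.
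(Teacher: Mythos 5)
Your primary argument is correct, and it is a genuinely different (and more elementary) route than the paper's. The paper proves the proposition exactly by your second route: it applies the symmetrization result, Proposition \ref{centering}, to $M_n$ and an independent copy $M_n'$, and justifies the unique-median hypothesis of that proposition by asserting that the distribution function of $M_n$ is continuous and strictly increasing on $(0,\infty)$. Your direct argument --- that any median of a sequence converging in probability to a deterministic centering must eventually lie within $\epsilon$ of that centering --- needs neither the independent copy nor uniqueness (nor even continuity) of the median, so it establishes the conclusion for an arbitrary choice of median under the stated hypotheses alone; this is a real gain, since the proposition constrains only the marginal laws $\mu_k$ and not the joint law of $(X_1,\dots,X_n)$, and strict monotonicity of $F_{M_n}$ is not automatic in that generality (as your own hedge about the i.i.d.\ and jointly Gaussian cases suggests). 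One small point in your symmetrization route: writing only $\mathcal{L}(X_k')=\mu_k$ with $(X_k')$ independent of $(X_k)$ pins down the marginals but not the joint law, so $M_n'$ need not be equidistributed with $M_n$; to invoke Proposition \ref{centering} one should take $(X_k')_{k\ge 1}$ to be an independent copy of the entire sequence. Your direct argument is unaffected by any of this and is the one worth keeping as the proof.
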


The proof of Proposition \ref{medians via symm} is immediate since the distribution function of $M_n$ is continuous and strictly increasing on $(0,\infty)$ and zero elsewhere when $B$ is separable and the Gaussian random vectors are assumed to be centered and non-degenerate. Since results in the literature as well as in Sections 2,3,4(above) imply (\ref{convergence assumed}) with $b_n$  either $\sqrt{2Ln}$ or $\sqrt{2LLn}$,  the asymptotic behavior of the medians of $M_n$ in these situations are precisely determined by (\ref{median precision}). Furthermore, determining precise asymptotic behavior for these medians by direct calculation does not appear to be immediate.

\section{Properties of the Banach-valued OU 
Process}\label{OU} Here we present some results on sample function continuity and stationarity properties for the $E$-valued Ornstein-Uhlenbeck process of (\ref{E-valued OU}) that were used in the proofs in Section \ref{banach OU}. We start with the sample function continuity of the related Brownian motion in Lemma \ref{existence of brownian}, which appeared earlier in \cite{gross-potential}. In view of this, and the additional results and proofs in \cite{partial-max-appendix}, we  only state the necessary facts and related notation here. That the resulting $\gamma$-Brownian motion is a mean zero Gaussian process in the sense of Definition \ref{def gaussian} can easily be seen from the proof of Lemma
\ref{OU gaussian} below.

\subsection{Sample Function Continuity}\label{sample function 1} 
\numberwithin{equation}{subsection}
As before assume $\gamma$ is a mean zero Gaussian measure on $E$ with norm $q$. In addition, assume $\Omega_E$ is the space of continuous functions $x$ from $[0,\infty)$ into $E$ such that $x(0)=0$, and $\mathcal{F}$ is the $\sigma$-field of $\Omega_E$ generated by the functions $x \rightarrow x(t), 0 \le t < \infty$. Our next lemma provides a proof that there exists a probability measure $P_{\gamma}$ on $(\Omega_E,\mathcal{F})$ such that if $0=t_0<t_1\cdots< t_n$, then the random vectors
\begin{align}\label{indep incr}
x(t_j)-  x(t_{j-1}), j=1,\cdots ,n, ~{\rm{are~independent}}, 
\end{align}
and $x(t_j)- x(t_{j-1})$ has distribution $\gamma_{t_j - t_{j-1}}$ on $E$, where $\gamma_{s}(A)= \gamma(A/\sqrt{s})$ for Borel subsets $A$ of $E$ when $s>0$ and $\gamma_0=\delta_0.$ In particular, the stochastic process  $\{W(t): t\ge 0\}$ defined on $(\Omega_E, \mathcal{F},P_{\gamma})$ by $W(t,x)=x(t)$ has stationary independent mean zero Gaussian increments. We will call it $\gamma$-Brownian motion on $E$. 

\begin{subseclem}\label{existence of brownian}
Let $\gamma$ be a mean zero Gaussian measure on $E$. 
Then, the $E$-valued Brownian motion $W=\{W(t): t\ge 0\}$ defined on $(\Omega_E, \mathcal{F},P_{\gamma})$ by $W(t,x)= x(t)$ exists. In particular, it is sample path continuous and a mean zero Gaussian process (in the sense of Definition \ref{def gaussian}) with\\
\indent {(i) $W(0)=0,$ }\\
\indent {(ii) $W$ has stationary independent mean zero Gaussian increments as indicated above, and}\\
\indent {(iii) if the support of $\gamma$} is a closed subspace $F$ of $E$, then the probability $P_{\gamma}$ has support on $\Omega_F$, the space of continuous functions on $[0,\infty)$ with values in $F$, and $W$ is a $\gamma$ Brownian motion on F.\\
\end{subseclem}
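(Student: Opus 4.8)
The plan is to build $W$ from scratch via Kolmogorov's consistency theorem and then upgrade to a sample‑continuous version by a Kolmogorov--Chentsov argument, with Fernique's integrability theorem the only genuinely Gaussian input. First I would fix, for each $0=t_0<t_1<\cdots<t_n$, the Borel probability measure $Q_{t_1,\dots,t_n}$ on $E^n$ equal to the law of $(\Delta_1,\ \Delta_1+\Delta_2,\ \dots,\ \Delta_1+\cdots+\Delta_n)$, where $\Delta_1,\dots,\Delta_n$ are independent with $\mathcal L(\Delta_j)=\gamma_{t_j-t_{j-1}}$. These are consistent under permutation and omission of a coordinate — deleting $t_j$ merges the increments over $[t_{j-1},t_j]$ and $[t_j,t_{j+1}]$, and $\gamma_s*\gamma_{s'}=\gamma_{s+s'}$ because $\gamma$ is Gaussian — so Kolmogorov's theorem yields a process $\widetilde W=\{\widetilde W(t):t\ge0\}$ with these finite‑dimensional laws, $\widetilde W(0)=0$, independent increments, and $\widetilde W(t)-\widetilde W(s)\overset{d}{=}\sqrt{t-s}\,Z$ with $Z\sim\gamma$.

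Next I would produce a sample‑continuous modification. By Fernique's theorem $m_p:=\int_E\|z\|^p\,d\gamma(z)<\infty$ for every $p>0$, so for $0\le s\le t$,
\begin{align*}
\E\,\|\widetilde W(t)-\widetilde W(s)\|^{p}=(t-s)^{p/2}\,m_p .
\end{align*}
Choosing any $p>2$ this is a Kolmogorov--Chentsov bound $\E\,\|\widetilde W(t)-\widetilde W(s)\|^{p}\le C_p\,|t-s|^{1+(p/2-1)}$, and the standard dyadic‑chaining proof of Kolmogorov--Chentsov applies verbatim for processes with values in the complete metric space $(E,\|\cdot\|)$: on each $[0,N]$ it gives a modification with paths Hölder of every order below $\tfrac12-\tfrac1p$, and patching over $N=1,2,\dots$ produces a modification $W$ with paths in $C_E[0,\infty)$ vanishing at $0$. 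I would then let $P_\gamma$ be the law of $W$ on $\Omega_E$; exactly as in the proof of Lemma \ref{Xk gaussian}, separability of $E$ makes $\mathcal F$ equal to the Borel $\sigma$‑field of $\Omega_E$ for uniform‑on‑compacta convergence, so $P_\gamma$ is the asserted measure, and (i) and (ii) are immediate from the finite‑dimensional laws of $\widetilde W$.

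It then remains to check the Gaussian property in the sense of Definition \ref{def gaussian} and the support statement (iii). For the former, fix $0\le t_1<\cdots<t_d$ and $\eta_1,\dots,\eta_d\in E^*$; writing $\Delta_i=W(t_i)-W(t_{i-1})$ and rearranging the nested sums,
\begin{align*}
\sum_{j=1}^{d}\eta_j\big(W(t_j)\big)=\sum_{i=1}^{d}\Big(\sum_{j\ge i}\eta_j\Big)(\Delta_i)=\sum_{i=1}^{d}\psi_i(\Delta_i),
\end{align*}
a sum of independent real random variables, each $\psi_i(\Delta_i)$ mean‑zero Gaussian since $\mathcal L(\Delta_i)=\gamma_{t_i-t_{i-1}}$ is a Gaussian measure on $E$; hence every continuous linear functional on $E^d$ is mean‑zero Gaussian on $(W(t_1),\dots,W(t_d))$, which is Definition \ref{def gaussian}. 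For (iii), if $\operatorname{supp}\gamma=F$ then $\operatorname{supp}\gamma_s=\sqrt s\,F=F$ for $s>0$, so almost surely every increment over a rational time interval lies in $F$, whence $W(q)\in F$ for all rational $q$ a.s.; continuity and closedness of $F$ then give $W(t)\in F$ for all $t$ a.s., so $P_\gamma$ is carried by $\Omega_F$ and there $W$ is a $\gamma$‑Brownian motion on $F$.

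The only point that needs care is the legitimacy of the Kolmogorov--Chentsov modification with values in an infinite‑dimensional $E$; since that argument uses only completeness of the target and the moment bound above, there is in fact no real obstruction, and the single place genuine Gaussianity enters is the finiteness of $m_p$ for some $p>2$ via Fernique. (Alternatively one can avoid Kolmogorov--Chentsov by fixing a representation $\gamma=\mathcal L\big(\sum_n g_n e_n\big)$ with $g_n$ i.i.d.\ $N(0,1)$ and $e_n\in E$, setting $W(t)=\sum_n\beta_n(t)e_n$ for independent real Brownian motions $\beta_n$, and invoking the It\^o--Nisio theorem for a.s.\ uniform convergence on compacta; this is the route closer to \cite{gross-potential}.)
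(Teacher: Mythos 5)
Your construction is correct. Note first that the paper itself offers no proof of this lemma: it is stated as a fact imported from \cite{gross-potential}, with self-contained proofs deferred to the separate appendix \cite{partial-max-appendix}, so there is no in-text argument to match yours against. Your primary route --- Kolmogorov consistency (using $\gamma_s*\gamma_{s'}=\gamma_{s+s'}$, which holds because characteristic functionals determine Borel measures on a separable Banach space), then a Kolmogorov--Chentsov modification from $\E\,\|\widetilde W(t)-\widetilde W(s)\|^{p}=(t-s)^{p/2}m_p$ with $m_p<\infty$ supplied by Fernique --- is a legitimate, self-contained alternative to Gross's construction, and you correctly identify that the chaining argument needs only completeness of the target. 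Your verification of the Gaussian property via the telescoping identity $\sum_j\eta_j(W(t_j))=\sum_i\bigl(\sum_{j\ge i}\eta_j\bigr)(\Delta_i)$ is in fact the same identity the paper uses later for the Ornstein--Uhlenbeck process (Lemma \ref{OU gaussian}), and your support argument for (iii) is sound (indeed, $W(q)\in F$ a.s.\ for each rational $q$ already follows from $\mathrm{supp}\,\gamma_q=\sqrt{q}\,F=F$, without passing through increments). The parenthetical series representation $W(t)=\sum_n\beta_n(t)e_n$ with It\^o--Nisio is the route actually closer to \cite{gross-potential}; what your main argument buys over it is that no series representation of $\gamma$ need be fixed and the only Gaussian input is Fernique's moment bound, while the series route gives the martingale/maximal-inequality structure more directly. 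Also make sure, when transporting the law to $(\Omega_E,\mathcal F,P_\gamma)$, to note that $x\mapsto x(t)$ generates the same $\sigma$-field as the locally uniform Borel structure because $E$ is separable --- you do say this, and it mirrors the argument in Lemma \ref{Xk gaussian}.
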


\begin{subsecrem} It is usual to assume the support of $\gamma$ to be $E$ when we define $\gamma$-Brownian motion, but in later proofs it will be convenient to keep it mind this need not be the case.
\end{subsecrem}

\subsection{Gaussian and Stationary Properties for Ornstein-Uhlenbeck Processes}\label{existence OU}
Let $Y$ be a sample continuous Ornstein-Uhlenbeck process generated by $\gamma$ as in (\ref{E-valued OU}). In this subsection we then prove $Y$ is a mean zero Gaussian process in the sense of Definition \ref{def gaussian} and also a stationary process in the sense of Definition \ref{def stationary}. As a result, it follows that both Lemma \ref{f(X) gaussian} and \ref{Xk gaussian} apply to $Y$.

\begin{subseclem}\label{OU gaussian}The sample continuous Ornstein-Uhlenbeck process$ \{Y(t): t \ge 0\}$ is a mean zero Gaussian process in the sense of Definition \ref{def gaussian}.
\end{subseclem}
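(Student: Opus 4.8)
The plan is to reduce the statement to the already-established Gaussianity of $\gamma$-Brownian motion by means of a deterministic linear change of variables. Fix an integer $d\ge 1$ and times $0\le t_1<t_2<\cdots<t_d$, and set $s_j=e^{t_j}$, so that $1\le s_1<\cdots<s_d$. By Lemma \ref{existence of brownian} the process $W=\{W(t):t\ge 0\}$ is a mean zero Gaussian process in the sense of Definition \ref{def gaussian}; in particular $\mathcal{L}(W(s_1),\ldots,W(s_d))$ is a mean zero Gaussian measure on $E^d$.

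Next I would use the definition (\ref{E-valued OU}) of $Y$ to write
\begin{align*}
(Y(t_1),\ldots,Y(t_d)) = T(W(s_1),\ldots,W(s_d)),
\end{align*}
where $T\colon E^d\to E^d$ is the bounded linear diagonal operator $T(x_1,\ldots,x_d)=(e^{-t_1/2}x_1,\ldots,e^{-t_d/2}x_d)$. Since a continuous linear image of a centered Gaussian measure is again a centered Gaussian measure, it follows that $\mathcal{L}(Y(t_1),\ldots,Y(t_d))$ is a mean zero Gaussian measure on $E^d$. To keep this last step self-contained I would check it on linear functionals: every $h\in (E^d)^*$ has the form $h(x_1,\ldots,x_d)=\sum_{j=1}^d h_j(x_j)$ with $h_j\in E^*$, so
\begin{align*}
h(Y(t_1),\ldots,Y(t_d)) = \sum_{j=1}^d e^{-t_j/2}h_j(W(s_j)) = g(W(s_1),\ldots,W(s_d)),
\end{align*}
where $g=(e^{-t_1/2}h_1,\ldots,e^{-t_d/2}h_d)\in (E^d)^*$; and the right-hand side is a mean zero Gaussian real random variable precisely because $\mathcal{L}(W(s_1),\ldots,W(s_d))$ is mean zero Gaussian on $E^d$. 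As $d$ and $t_1,\ldots,t_d$ were arbitrary, this establishes that $Y$ is a mean zero Gaussian process in the sense of Definition \ref{def gaussian}.

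There is no serious obstacle here: the argument is just the observation that $Y(t)$ is a deterministic scalar multiple of $W$ evaluated at a deterministic time, combined with the stability of centered Gaussianity under bounded linear maps. If one prefers not to invoke that stability, an alternative is to expand $W(s_j)=\sum_{i=1}^{j}\big(W(s_i)-W(s_{i-1})\big)$ (with $s_0=0$, $W(0)=0$), use that the increments are independent mean zero Gaussian $E$-valued vectors by Lemma \ref{existence of brownian}, and note that any linear functional of $(Y(t_1),\ldots,Y(t_d))$ is then a finite linear combination of independent mean zero real Gaussians, hence itself mean zero Gaussian. The only minor point to handle carefully is the standard identification $(E^d)^*=(E^*)^d$ used above for finite products of Banach spaces.
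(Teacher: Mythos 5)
Your argument is correct, and your ``alternative'' route at the end is in fact the paper's own proof: the paper reduces to showing $\sum_{j=1}^d\theta_j(Y(t_j))$ is mean zero Gaussian, rewrites this sum via the telescoping identity $\sum_j a_j(b_j)=\sum_j\bigl(\sum_{k=j}^d a_k\bigr)(b_j-b_{j-1})$ as a linear combination of the independent increments $W(e^{t_j})-W(e^{t_{j-1}})$, and concludes from independence of mean zero Gaussians. Your primary route --- cite from Lemma \ref{existence of brownian} that $W$ is a mean zero Gaussian process per Definition \ref{def gaussian}, so $(W(s_1),\dots,W(s_d))$ is centered Gaussian on $E^d$, then push forward under the diagonal map $T$ --- is cleaner and equally valid, with one caveat about the paper's internal bookkeeping: the remark at the head of Section \ref{OU} says that the Gaussianity of $W$ in the sense of Definition \ref{def gaussian} is itself ``easily seen from the proof of Lemma \ref{OU gaussian}'', i.e.\ the paper intends that fact to be extracted from the very increment decomposition you relegate to the alternative. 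So if you want your shortcut to stand on its own within this paper's logic, you should either note that joint Gaussianity of $(W(s_1),\dots,W(s_d))$ follows from the stationary independent Gaussian increments in (\ref{indep incr}) by the same telescoping step, or simply promote your increment-based alternative to the main argument. Either way, what your version buys is the explicit isolation of the general principle (continuous linear images of centered Gaussian measures are centered Gaussian), which the paper verifies by hand on linear functionals without naming it.
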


\begin{proof} For each integer $d\geq 1$ and 
$0 \le t_1<t_2,<\cdots <t_d$  we need to show the finite dimensional distribution of
\begin{align}
(Y(t_1),\cdots,Y(t_d))  
\end{align}
is a mean zero Gaussian measure on $E^d$. Since the typical continuous linear functional on $E^d$ is of the form
\begin{align}
f(x_1,\cdots,x_d) = \sum_{j=1}^d \theta_j(x_j), 
\end{align} 
where $\theta_1, \cdots,\theta_d \in E^*$, and $E^d$ is a separable Banach space, it suffices to show
\begin{align}
\sum_{j=1}^d \theta_j(Y(t_j)) 
\end{align}
is a mean zero Gaussian random variable.

If $d=1$, then
\begin{align*}
\theta_1(Y(t_1)) =e^{-\frac{t_1}{2}}\theta_1(W(e^{t_1})),
\end{align*}
and hence $Y(t_1)$ is a mean zero Gaussian random variable since
$\theta_1(W(s))$ is a mean zero Gaussian random variable for each $s \ge 0$. Moreover, by the scaling property of the $E$-valued Brownian motion
\begin{align*}
E[\theta_1^{2}(Y(t_1))] = E[\theta_1^{2}(W(1))] .
\end{align*}
To handle the situation for $d\ge 2$ it is useful to consider the following identity which is easily established by induction. That is, for $a_j, 1 \le j \le d,$ linear functionals on $E$ and $b_j, 0 \le j \le d$ points in $E$ with $b_0=0$ we have
\begin{align*}
\sum_{j=1}^d a_j( b_j)= \sum_{j=1}^d \sum_{k=j}^d a_k(b_j -b_{j-1})=  \sum_{j=1}^d \big(\sum_{k=j}^d a_k\big)(b_j -b_{j-1}). 
\end{align*}
Therefore, with $a_j= e^{-\frac{t_j}{2}} \theta_j$  and $b_j=W(e^{t_j})$ for $1 \le j \le d$ with $b_0=W(e^{t_0})=0$ we have 
\begin{align}\label{sum theta j}
\sum_{j=1}^d \theta_j(Y(t_j))= \sum_{j=1}^d  e^{-\frac{t_j}{2}}\theta_j(W(e^{t_j}))= \sum_{j=1}^d \big(\sum_{k=j}^d  e^{-\frac{t_k}{2}}\theta_k\big)(W(e^{t_j}) -W(e^{t_{j-1}})). 
\end{align}
Hence, by  the independent increments of the $\gamma$-Brownian motion on $E$ we thus have $\sum_{j=1}^d \theta_j(Y(t_j))$ is the sum of the independent mean zero Gaussian random variables
\begin{align}\label{substitute W}
\big(\sum_{k=j}^d  e^{-\frac{t_k}{2}}\theta_k\big)(W(e^{t_j}) -W(e^{t_{j-1}})), 1 \le j \le d, 
\end{align} 
which completes the proof.
\end{proof}

\begin{subseclem}\label{OU stationary} The sample continuous Ornstein-Uhlenbeck process$ \{Y(t): t \ge 0\}$ is a stationary mean zero Gaussian process in the sense of Definitions \ref{def gaussian} and \ref{def stationary}.
\end{subseclem}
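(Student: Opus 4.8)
The plan is as follows. By Lemma \ref{OU gaussian} we already know $Y=\{Y(t):t\ge 0\}$ is a mean zero Gaussian process in the sense of Definition \ref{def gaussian}, so the only thing left to prove is stationarity in the sense of Definition \ref{def stationary}: for each integer $r\ge 1$, each $0\le t_1<\cdots<t_r$ and each $h>0$, the laws of $(Y(t_1),\dots,Y(t_r))$ and $(Y(t_1+h),\dots,Y(t_r+h))$ on $E^r$ coincide. First I would observe that both of these are mean zero Gaussian measures on the separable Banach space $E^r$ (this is exactly what the argument in the proof of Lemma \ref{OU gaussian} shows, since every element of $(E^r)^*$ has the form $(x_1,\dots,x_r)\mapsto \sum_{j=1}^r\theta_j(x_j)$ with $\theta_j\in E^*$). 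A mean zero Gaussian measure on a separable Banach space is determined by its characteristic functional, hence by its covariance bilinear form, so these two measures are equal as soon as one shows
$$\sum_{j,k}E[\theta_j(Y(t_j))\psi_k(Y(t_k))]=\sum_{j,k}E[\theta_j(Y(t_j+h))\psi_k(Y(t_k+h))]$$
for all choices of $\theta_1,\dots,\theta_r,\psi_1,\dots,\psi_r\in E^*$. Thus it suffices to prove that $\rho_{\theta,\psi}(s,t):=E[\theta(Y(s))\psi(Y(t))]$ depends on $(s,t)$ only through $|s-t|$.

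Next I would record the covariance of $\gamma$-Brownian motion. For $\theta,\psi\in E^*$ and $0\le u\le v$, using Lemma \ref{existence of brownian} (the increments are independent with $W(v)-W(u)$ distributed as $\gamma_{v-u}$, and the scaling $\gamma_s(A)=\gamma(A/\sqrt s)$ gives $W(u)\stackrel{d}{=}\sqrt u\,W(1)$) together with the fact that the increments are mean zero,
\begin{align*}
E[\theta(W(u))\psi(W(v))]
&= E[\theta(W(u))\psi(W(u))]+E[\theta(W(u))\psi(W(v)-W(u))]\\
&= u\,E[\theta(W(1))\psi(W(1))],
\end{align*}
so in general $E[\theta(W(u))\psi(W(v))]=\min(u,v)\,E[\theta(W(1))\psi(W(1))]$.

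Finally, plugging in $Y(s)=e^{-s/2}W(e^s)$ and using $\min(e^s,e^t)=e^{\min(s,t)}$ and $\min(s,t)-\tfrac12(s+t)=-\tfrac12|s-t|$,
\begin{align*}
\rho_{\theta,\psi}(s,t)
&=e^{-(s+t)/2}E[\theta(W(e^s))\psi(W(e^t))]\\
&=e^{-(s+t)/2}\min(e^s,e^t)\,E[\theta(W(1))\psi(W(1))]\\
&=e^{-|s-t|/2}E[\theta(W(1))\psi(W(1))],
\end{align*}
which depends only on $|s-t|$. Hence replacing every $t_j$ by $t_j+h$ leaves each term $E[\theta_j(Y(t_j))\psi_k(Y(t_k))]$ unchanged, the two covariance forms agree, the two Gaussian measures on $E^r$ agree, and stationarity follows; combined with Lemma \ref{OU gaussian} this is the lemma. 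The covariance computations here are entirely routine; the only point that needs a little care is the reduction in the first paragraph — the standard fact that a mean zero Gaussian measure on a separable Banach space is pinned down by the covariances of the real functionals $\sum_j\theta_j(\cdot)$ — since that is what lets us replace a direct comparison of the full $E^r$-valued finite-dimensional laws by the scalar identity for $\rho_{\theta,\psi}$. (One could instead mimic the increment decomposition in the proof of Lemma \ref{OU gaussian} and check translation invariance of the joint law of the relevant Brownian increments directly, but the covariance route above is shorter.)
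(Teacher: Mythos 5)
Your proof is correct, and it rests on the same two pillars as the paper's: the fact that a mean zero Gaussian measure on a separable Banach space is determined by the second moments of its linear functionals, and the independence-plus-scaling of the increments of $\gamma$-Brownian motion. The computational route differs in a mild but genuine way. The paper fixes a single functional $f(x_1,\dots,x_d)=\sum_j\theta_j(x_j)$, rewrites $\sum_j\theta_j(Y(t_j))$ by the summation-by-parts identity as $\sum_j\psi_j\bigl(W(e^{t_j})-W(e^{t_{j-1}})\bigr)$ with $\psi_j=\sum_{k\ge j}e^{-t_k/2}\theta_k$, and then matches the variance of each independent increment term before and after the shift by $h$ (each contributing $(e^{t_j}-e^{t_{j-1}})E[\psi_j^2(W(1))]$, respectively $e^{-h}(e^{t_j+h}-e^{t_{j-1}+h})E[\psi_j^2(W(1))]$). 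You instead compute the full two-point covariance $E[\theta(Y(s))\psi(Y(t))]=e^{-|s-t|/2}E[\theta(W(1))\psi(W(1))]$ and observe translation invariance directly. Your route is a little shorter and isolates the classical exponential covariance of the Ornstein--Uhlenbeck process, which is reusable information (it is essentially what Lemma \ref{change space lemma} later quantifies for functionals on $C_E[0,1]$); the paper's route avoids polarization by working with variances of a single functional and reuses the increment decomposition it had already set up in the proof of Lemma \ref{OU gaussian}. Either way the reduction to second moments is the only point requiring care, and you have stated it correctly.
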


\begin{proof} Given Lemma (\ref{OU gaussian}) it suffices to show for each integer $d\geq 1$, $h>0$, and 
$0 \le t_1<t_2,<\cdots <t_d$  that the finite dimensional distributions of 
\begin{align}
(Y(t_1),\cdots,Y(t_d)) ~{\rm{and}}~(Y(t_1+h),\cdots,Y(t_d+h)) 
\end{align}
are equal Gaussian probability measures on $E^d$.

By (\ref{OU gaussian}) we know both are mean zero Gaussian distributions on $E^d$, and hence they will be equal
if the variance of every linear functional on $E^d$ is the same for each distribution. That is, since the typical continuous linear functional on $E^d$ is of the form
\begin{align}
f(x_1,\cdots,x_d) = \sum_{j=1}^d \theta_j(x_j),
\end{align} 
where $\theta_, \cdots,\theta_d \in E^*$, and $E^d$ is a separable Banach space, it suffices to show
\begin{align}
E[(\sum_{j=1}^d \theta_j(Y(t_j)))^2] = E[(\sum_{j=1}^d \theta_j(Y(t_j+h)))^2]. 
\end{align}
Hence, from (\ref{sum theta j}), (\ref{substitute W}),  and
\begin{align*}
\psi_{j}= \sum_{k=j}^d e^{-\frac{t_k}{2}}\theta_k
\end{align*}
for $1 \le j \le d$, it suffices to show 
\begin{align}\label{variance increments}
E\big[  \{ \psi(W(e^{t_j}) -W(e^{t_{j-1}}))\}^{2} \big]= E\big[\{   e^{-\frac{h}{2}}\psi(W(e^{t_j+h}) -W(e^{t_{j-1}+h}))\}^{2}\big]. 
\end{align}
Using the scaling property and independent increments of the $E$-valued Brownian motion $\{W(t): t \ge 0\}$ we have
\begin{align*}
E\big[  \{ \psi(W(e^{t_j}) -W(e^{t_{j-1}}))\}^{2} \big] = (e^{t_j} - e^{t_{j-1}})E[\psi^2(W(1))],
\end{align*}
and
\begin{align*}
 E\big[\{   e^{-\frac{h}{2}}\psi(W(e^{t_j+h}) -W(e^{t_{j-1}+h}))\}^{2}\big]= e^{-h} (e^{t_j+h} - e^{t_{j-1}+h})E[\psi^2(W(1))],
\end{align*}
Thus (\ref{variance increments}) holds for $1\le j \le d$, which completes the proof.
\end{proof}

\subsection{More Stationary and Gaussian Properties for the Ornstein-Uhlenbeck Process}\label{Xk stationary} Let $\gamma$ be a non-degenerate mean zero Gaussian measure on the Borel sets of $E$, and assume 
\begin{align}\label{def Y from W}
Y(t)=: e^{-\frac{t}{2}}W(e^{t}), t \ge 0,
\end{align}
is the $E$-valued sample path continuous Ornstein-Uhlenbeck process generated by $\gamma$ normalized so that the law of $W(1)$, and hence also $Y(1)$, is $\gamma$ ( see Section \ref{sample function 1} for more details). In addition, assume the $C_E[0,1]$ valued random vectors $\{X_k: k \ge 1\}$ are defined as in (\ref{X_{k}}).
 
The next lemma shows that for every $f \in C_E^{*}[0,1]$ the sequence $\{f(X_k(\cdot): k\ge 1\}$ is a stationary sequence of real-valued mean zero Gaussian random variables.
Our proof depends on a Riesz representation result for such linear functionals established by Bochner and Taylor in \cite{bochner-taylor}, and for that we need some additional notation.

Define $V(E^*,q^*)$ to be the functions $g$ mapping $[0,1]$ into $E^*$ such that the sums
\begin{align*}
\sum_{j=1}^n q^* (g(t_j) - g(t_{j-1}))
\end{align*}
are uniformly bounded for all partitions $\{t_j\}$, where $0=t_0<t_1<\cdots<t_n=1$. For $g \in V(E^*,q^*)$, the least upper bound of all such sums is the $E^*$ bounded variation of $g$ and is denoted by $V(g,E^*)$.

If $x \in C_E[0,1]$ and $g \in V(E^*,q^*),$ Bochner and Taylor define the integral
\begin{align}\label{BT integral 1}
\int_0^1dg(t)x(t) 
\end{align}
in the usual way. That is, with $g$ fixed, for each $x \in C_E[0,1]$ and  each partition $D=\{t_j\}$ with $0=t_0<t_1<\cdots<t_n=1$ and points $\tau_j,~ t_{j-1} \le \tau_j \le t_j,$ denote the real-valued sum 
\begin{align}\label{BT integral 2}
S(D,x)=\sum_{j=1}^n [g(t_j) - g(t_{j-1})]x(\tau_j). 
\end{align}
If we put $|D| =\max_{1 \le j \le n} |t_j - t_{j-1}|$, then as indicated in \cite{bochner-taylor} in the usual way for a sequence of partitions $D_i$ with $|D_i| \rightarrow 0$ the sums $S(D_i,x)$ approach a limit (in $\mathbb{R}$) which is independent of the particular sequence $\{D_i\}$. This limit is called the integral of $x(\cdot)$ with respect to $g(\cdot)$. It is written as in
(\ref{BT integral 1}), and it follows immediately that
\begin{align}
|\int_0^1dg(t)x(t)| \le \sup_{0 \le t \le 1} q(x(t)) V(g,E^*).
\end{align}

The Riesz representation given by Bochner and Taylor is the following theorem. It will be used in Lemma \ref{f(Xk) stationary} below.

\begin{subsecthm}\cite{bochner-taylor} If $f $ is a continuous linear functional on $C_E[0,1]$, then there exists a function $g$ of $E^*$-bounded variation  $(g \in V(E^*,q^*))$ such that for each $x \in C_E[0,1]$
\begin{align}\label{integral from subsec thm}
f(x) = \int_0^1dg(t)x(t), 
\end{align}
where the integral is of the Riemann-Stieljes type as given in (\ref{BT integral 1})-(\ref{BT integral 2}), and 
\begin{align}\label{V(g,E*)}
\sup_{\{x \in C_E[0,1]: \|x\|=1\}}|f(x)| = V(g,E^*).  
\end{align}
\end{subsecthm}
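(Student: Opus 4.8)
The plan is to imitate the classical proof of the Riesz representation theorem for $C[0,1]^{*}$, carried out with $E$-valued functions. Let $B_E[0,1]$ denote the Banach space of all bounded (not necessarily continuous) functions $x\colon[0,1]\to E$ under $\|x\|=\sup_{t\in[0,1]}q(x(t))$, so that $C_E[0,1]$ is a closed subspace. First I would invoke the Hahn--Banach theorem to extend the given $f\in C_E^{*}[0,1]$ to a linear functional $F$ on $B_E[0,1]$ with $\|F\|^{*}=\|f\|^{*}$. For $e\in E$ and $t\in(0,1]$, let $\chi_{[0,t]}e$ be the element of $B_E[0,1]$ equal to $e$ on $[0,t]$ and to $0$ on $(t,1]$, and define $g(t)\in E^{*}$ by $g(t)(e)=F(\chi_{[0,t]}e)$, together with $g(0)=0$. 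Linearity of $g(t)$ in $e$ is clear, and $|g(t)(e)|\le\|F\|^{*}q(e)$ gives $q^{*}(g(t))\le\|f\|^{*}$ for every $t$.

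The next step is to show $g\in V(E^{*},q^{*})$ with $V(g,E^{*})\le\|f\|^{*}$. Fix a partition $0=t_0<t_1<\cdots<t_n=1$ and $\varepsilon>0$, and for each $j$ choose $e_j\in E$ with $q(e_j)\le1$ and $(g(t_j)-g(t_{j-1}))(e_j)>q^{*}(g(t_j)-g(t_{j-1}))-\varepsilon/n$ (taking $e_j=0$ when that difference is the zero functional). Since $g(0)=0$ and $\chi_{[0,t_j]}-\chi_{[0,t_{j-1}]}=\chi_{(t_{j-1},t_j]}$ for $j\ge2$ (because $t_{j-1}>0$), the function $h=\chi_{[0,t_1]}e_1+\sum_{j=2}^{n}\chi_{(t_{j-1},t_j]}e_j$ in $B_E[0,1]$ satisfies $\sum_{j=1}^{n}(g(t_j)-g(t_{j-1}))(e_j)=F(h)$, and since the supports $[0,t_1],(t_1,t_2],\dots,(t_{n-1},t_n]$ are pairwise disjoint we get $\|h\|\le\max_j q(e_j)\le1$, whence $\sum_{j=1}^{n}q^{*}(g(t_j)-g(t_{j-1}))<F(h)+\varepsilon\le\|f\|^{*}+\varepsilon$. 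Letting $\varepsilon\downarrow0$ and taking the supremum over partitions gives the bound.

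Then I would prove $f(x)=\int_0^1 dg(t)x(t)$ for $x\in C_E[0,1]$. Given $\varepsilon>0$, uniform continuity of $x$ on the compact interval $[0,1]$ provides $\delta>0$ with $q(x(s)-x(s'))<\varepsilon$ whenever $|s-s'|<\delta$. For any partition $D=\{t_j\}$ with $|D|<\delta$ and tags $\tau_j\in[t_{j-1},t_j]$, put $x_D=\chi_{[0,t_1]}x(\tau_1)+\sum_{j=2}^{n}\chi_{(t_{j-1},t_j]}x(\tau_j)$; then $\|x-x_D\|\le\varepsilon$, while $g(0)=0$ and the telescoping above give $F(x_D)=\sum_{j=1}^{n}(g(t_j)-g(t_{j-1}))(x(\tau_j))=S(D,x)$, the Riemann--Stieltjes sum of (\ref{BT integral 2}). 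Since $F$ is continuous and $x\in C_E[0,1]$, $|f(x)-S(D,x)|=|F(x)-F(x_D)|\le\|f\|^{*}\varepsilon$ for every such $D$; hence $S(D,x)\to f(x)$ as $|D|\to0$, and by the definition of the Bochner--Taylor integral this limit is $\int_0^1 dg(t)x(t)$. Finally, the estimate $|\int_0^1 dg(t)x(t)|\le\|x\|\,V(g,E^{*})$ recorded just before the theorem gives $\|f\|^{*}=\sup_{\|x\|=1}|f(x)|\le V(g,E^{*})$, which together with the reverse bound from the previous paragraph yields (\ref{V(g,E*)}).

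The step needing the most care is the endpoint $t=0$: the extension $F$ may assign nonzero mass to the singleton $\{0\}$, so one cannot take $g(0)(e)=F(\chi_{\{0\}}e)$ and still expect the Riemann--Stieltjes sums to record $x(0)$ correctly; setting $g(0)=0$ and absorbing $x(0)$ into the first block $\chi_{[0,t_1]}x(\tau_1)$ repairs this, exactly as verified above. Everything else is the routine $E$-valued analogue of the scalar Riesz argument, the one genuinely vector-space-specific ingredient being the disjoint-support function $h$ used to pass from $F(h)\le\|f\|^{*}$ to the total variation bound $V(g,E^{*})\le\|f\|^{*}$.
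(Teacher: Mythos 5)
The paper does not prove this statement; it is quoted from Bochner--Taylor, so there is no in-text argument to compare against. Your proof is correct and is the standard Riesz-representation argument that Bochner and Taylor themselves adapt to the vector-valued setting: Hahn--Banach extension of $f$ to the bounded $E$-valued functions, $g(t)(e)=F(\chi_{[0,t]}e)$ with $g(0)=0$, the disjoint-support test function $h$ to get $V(g,E^*)\le\|f\|^{*}$, step-function approximation of continuous $x$ to identify $f(x)$ with the limit of the sums $S(D,x)$, and the elementary estimate $|\int_0^1dg(t)x(t)|\le\|x\|\,V(g,E^*)$ for the reverse inequality; your handling of the endpoint $t=0$ (setting $g(0)=0$ and absorbing $x(0)$ into the block $\chi_{[0,t_1]}$) is exactly the right normalization.
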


\begin{subseclem}\label{f(Xk) stationary} Let $Y=:\{Y(t): t \ge 0\}$ be the sample continuous Ornstein-Uhlenbeck process generated by the non-degenerate mean zero Gaussian measure $\gamma$ on $E$ as in (\ref{def Y from W}), and assume the processes $\{X_k(t): 0 \le t \le 1\}$ are as in (\ref{X_{k}}). Then, for every $f \in C_E^{*}[0,1]$ the sequence $\{f(X_k(\cdot): k\ge 1\}$ is a stationary sequence of real-valued mean zero Gaussian random variables.
\end{subseclem}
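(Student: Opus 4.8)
The plan is to reduce the statement to two facts: for every $r\ge 1$ the vector $(f(X_1),\dots,f(X_r))$ is (a) mean zero and jointly Gaussian, and (b) has a covariance matrix that is invariant under the shift $X_k\mapsto X_{k+h}$. Since two mean zero Gaussian vectors with the same covariance matrix have the same law, (a) and (b) together show that $(f(X_1),\dots,f(X_r))$ and $(f(X_{1+h}),\dots,f(X_{r+h}))$ have the same law for all $r,h\ge 1$, which is precisely the assertion that $\{f(X_k):k\ge 1\}$ is a stationary mean zero Gaussian sequence of real random variables (that each $f(X_k)$ is mean zero Gaussian is already contained in Lemma \ref{Xk gaussian}, once one knows $Y$ is a sample continuous stationary mean zero Gaussian process by Lemmas \ref{OU gaussian} and \ref{OU stationary}). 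Throughout I would work with the Bochner--Taylor representation quoted above: there is $g\in V(E^*,q^*)$ with $f(x)=\int_0^1 dg(t)\,x(t)$, so that, using $X_k(t)=Y(t+k-1)$ as in (\ref{X_{k}}), the random variable $f(X_k)=\int_0^1 dg(t)\,Y(t+k-1)$ is, for every $\omega$, the limit as $|D|\to 0$ of the Riemann--Stieltjes sums $S(D,X_k)=\sum_m [g(t_m)-g(t_{m-1})]\bigl(Y(\tau_m+k-1)\bigr)$ from (\ref{BT integral 2}); each such sum is a finite linear combination of values of $Y$ evaluated by functionals in $E^*$.

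For (a), fix scalars $c_1,\dots,c_r$ and a single sequence of partitions $D_\ell$ with $|D_\ell|\to 0$. Then $\sum_{k=1}^r c_k f(X_k)$ is the almost sure limit of $\sum_{k=1}^r c_k S(D_\ell,X_k)$, and after collecting coefficients each of the latter has the form $\sum_p \phi_p\bigl(Y(s_p)\bigr)$ for finitely many times $s_1<\dots<s_M$ in $[0,r]$ and $\phi_p\in E^*$. By Lemma \ref{OU gaussian} the vector $(Y(s_1),\dots,Y(s_M))$ is a mean zero Gaussian measure on $E^M$, hence $\sum_p \phi_p(Y(s_p))$, being a continuous linear functional applied to it, is a mean zero Gaussian real random variable. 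An almost sure (hence in probability) limit of mean zero Gaussians is mean zero Gaussian, so $\sum_k c_k f(X_k)$ is mean zero Gaussian; as $c_1,\dots,c_r$ were arbitrary, $(f(X_1),\dots,f(X_r))$ is jointly mean zero Gaussian. (Equivalently, the sample continuous $E^r$-valued process $t\mapsto(X_1(t),\dots,X_r(t))$ has mean zero Gaussian finite-dimensional distributions by Lemma \ref{OU gaussian}, so Lemma \ref{f(X) gaussian} makes its law a Gaussian measure on $C_{E^r}[0,1]\cong C_E[0,1]^r$, and $(f(X_1),\dots,f(X_r))$ is the image of that measure under a bounded linear map.)

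For (b), write $E[f(X_i)f(X_j)]=\lim_{\ell}E\bigl[S(D_\ell,X_i)\,S(D_\ell,X_j)\bigr]$, the interchange of limit and expectation being justified by dominated convergence: the Bochner--Taylor bound gives $|S(D_\ell,X_i)\,S(D_\ell,X_j)|\le \|X_i\|\,\|X_j\|\,V(g,E^*)^2$, and $\|X_i\|\,\|X_j\|$ is integrable by the Cauchy--Schwarz inequality together with $\int_B\|x\|^2\,d\mu(x)<\infty$ from (\ref{gaussian variance}). Expanding the product, $E[S(D_\ell,X_i)S(D_\ell,X_j)]$ is a finite sum of terms of the form $E\bigl[\theta(Y(u))\,\theta'(Y(v))\bigr]$ with $\theta,\theta'\in E^*$, $u=\tau_m+i-1$, $v=\tau_{m'}+j-1$. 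By the stationarity of $Y$ (Lemma \ref{OU stationary}) the law of $(Y(u),Y(v))$ on $E^2$ equals that of $(Y(u+h),Y(v+h))$, so each such term is unchanged when $u$ and $v$ are both shifted by $h$; replacing $(i,j)$ by $(i+h,j+h)$ does exactly this to every term in the sum, and hence to the limit. Therefore $E[f(X_i)f(X_j)]=E[f(X_{i+h})f(X_{j+h})]$ for all $i,j,h$, which is (b); combined with (a) this finishes the proof.

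The step I expect to be the main obstacle is the limit passage in (b): one needs the Riemann--Stieltjes approximations $S(D_\ell,X_k)$ to converge well enough (in $L^1$ for the products) to move the limit past the expectation, and this is exactly where the square-integrability $\int_B\|x\|^2\,d\mu(x)<\infty$ of (\ref{gaussian variance}), applied to $\mu_k=\mu$, together with the uniform bound $|S(D,x)|\le\|x\|\,V(g,E^*)$ built into the Bochner--Taylor integral, does the real work; the same ingredients are what upgrade marginal Gaussianity to the joint Gaussianity needed in (a). Everything else is routine bookkeeping with $X_k(t)=Y(t+k-1)$ and the already-established Gaussian and stationarity properties of $Y$ from Lemmas \ref{OU gaussian} and \ref{OU stationary}.
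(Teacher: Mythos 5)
Your proposal is correct and follows essentially the same route as the paper: both proofs reduce $f$ via the Bochner--Taylor representation to a.s.\ limits of finite sums $\sum_j\phi_j(X_k(\tau_j))$ with $\phi_j\in E^*$, and then invoke the stationarity and Gaussianity of $Y$'s finite-dimensional distributions from Lemmas \ref{OU gaussian} and \ref{OU stationary}. The only (harmless) difference is in the final limit passage: the paper notes that the approximating $d$-vectors already have equal Gaussian laws before and after the shift and lets a.s.\ convergence transfer this to the limit laws, whereas you separately establish joint Gaussianity by Cram\'er--Wold and match covariances by dominated convergence, which costs you the extra (valid) integrability step via $|S(D,x)|\le\|x\|\,V(g,E^*)$ and (\ref{gaussian variance}).
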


\begin{proof} Given Lemmas \ref{OU gaussian} and \ref{OU stationary}, Lemma \ref{Xk gaussian} implies $\mu_k=\mathcal{L}(X_k),  k \ge 1,$ are identical mean zero Gaussian measures on $C_E[0,1]$, and the random variables $f(X_k), k \ge 1$, are mean zero Gaussian random variables with the same distribution. What remains is to show they are stationary and jointly Gaussian. 

Applying the Bochner-Taylor result, for each $f \in C_E^{*}[0,1]$ there exists  $g$ of $E^*$ bounded variation such that $f$ is as in (\ref{integral from subsec thm}), with  (\ref{V(g,E*)}) holding. Hence, for a sequence of partitions $\{D_i\}$ with $|D_i| \rightarrow 0,$ for all $x \in C_E[0,1]$
\begin{align}\label{lim for BT integral}
\lim_{i \rightarrow \infty} S(D_i,x) = f(x), 
\end{align}
where
\begin{align}\label{fi(x)}
f_i(x)=: S(D_i,x)= \sum_{j=1}^{r} \phi_j(x(\tau_j)), 
\end{align}
$\phi_j=:g(t_j) - g(t_{j-1}) \in E^*$, and $r+1$ equals the number of points in $D_i$. Of course, $r$ depends on $i$, but this is omitted to simplify the notation, and for all $x \in C_E[0,1]$ (\ref{lim for BT integral}) becomes 
\begin{align}\label{fi to f}
\lim_{i \rightarrow \infty} |f(x) - f_i(x)|=0. 
\end{align}

To show $\{f(X_k): k \ge 1\}$ is stationary and Gaussian we take $1 \le k_1<\cdots<k_d<\infty$ and $d\ge 1$ integers, and show 
\begin{align}\label{if show stationary}
\mathcal{L}(f(X_{k_1}),\cdots,f(X_{k_{d}}) )= \mathcal{L} (f(X_{k_1+h}),\cdots,f(X_{k_{d}+h}) ) 
\end{align}
are equal Gaussian probabilities on $\mathbb{R}^d$. Since (\ref{fi(x)}) and (\ref{fi to f}) hold, (\ref{if show stationary})  follows when we verify
\begin{align*}
\mathcal{L}(\sum_{j=1}^r\phi_j(X_{k_1}(\tau_j)&,\cdots,\sum_{j=1}^r \phi_j(X_{k_{d}}(\tau_j)) )\\
&=\mathcal{L}(\sum_{j=1}^r\phi_j(X_{k_1+h}(\tau_j),\cdots,\sum_{j=1}^r \phi_j(X_{k_{d}+h}(\tau_j)) ) \notag
\end{align*}
and that they are Gaussian measures.
Since $X_k(t)= Y(t +(k-1))$ for all $t \in [0,1]$ and integers $k \ge 1$, it suffices to show the probability measures
\begin{align}\label{first joint law}
\mathcal{L}\big(\sum_{j=1}^r\phi_j(Y(k_{1} -1+\tau_j)),\cdots,\sum_{j=1}^r \phi_j(Y(k_{d}-1 +\tau_j))\big)  
\end{align}
and 
\begin{align}\label{second joint law}
\mathcal{L}\big(\sum_{j=1}^r\phi_j(Y(k_{1}+h -1+\tau_j)),\cdots,\sum_{j=1}^r \phi_j(Y(k_{d}+h-1 +\tau_j))\big)  
\end{align}
are equal and jointly Gaussian. However, since
\begin{align*}
0 \le k_{1} -1+\tau_1<\cdots <k_1&-1+\tau_r \le k_{2}-1<\cdots <k_{2}-1+\tau_r \\
&\le \cdots \le k_{d}-1<\cdots <k_{d}-1+\tau_r\notag
\end{align*}
and $\{Y(t): t \ge 0\}$ is stationary and Gaussian we have
\begin{align*}
\mathcal{L}\big(Y(k_1 -1 +\tau_1),&\cdots, Y(k_1 -1 +\tau_r), \cdots,Y(k_d -1 +\tau_1),\\
&\cdots, Y(k_d -1 +\tau_r)\big),\notag
\end{align*}
and 
\begin{align*}
\mathcal{L} \big(Y(k_1+h -1 +\tau_1),&\cdots, Y(k_1+h -1 +\tau_r), \cdots,Y(k_d +h -1 +\tau_1),\\
&\cdots, Y(k_d +h -1 +\tau_r)\big),\notag
\end{align*}
are equal Gaussian probabilities on $\mathbb{E}^{rd}.$ From this, and that the $\{\phi_j: 1 \le j \le r\}$ are in $E^*$,  we immediately have the probabilities in (\ref{first joint law}) and (\ref{second joint law}) are equal and Gaussian, which completes the proof of the lemma.
\end{proof}

\subsection{Covariance Decay for the Ornstein-Uhlenbeck Process}\label{section covariance decay} 

In our proof of (\ref{liminf Y ge 0}) and (\ref{liminf normalized ge 0}) we have used Theorem 3.3 in \cite{pickands} which requires a rate of decay of the covariances for a sequence of stationary Gaussian random variables. Here we show that for the Ornstein-Uhlenbeck process the required rate of decay follows from the process itself, and is not an extra assumption. To estimate this covariance decay for the Ornstein-Uhlenbeck process generated by the mean zero Gaussian measure $\gamma$ on $E$ we first show the problem can be moved to any Banach space $F$ that is linearly isometric to $E$. Then, with suitably chosen $F$ we make the estimate. First, however we indicate some additional notation.

Let $E$ and $F$ be separable Banach spaces with norms $q_{E}$ and $q_{F}$ and suppose $\Lambda$: $E \rightarrow F$ is a linear isometry from $E$ onto $F$ with inverse $\Lambda^{-1}$. If $x \in C_E[0,\infty)$,  then we define the function $y(\cdot)={\hat{\Lambda}} x(\cdot) \in C_F[0,\infty)$ by
\begin{align}\label{hat lambda}
\hat{\Lambda}x(t)= \Lambda(x(t)), 0 \le t <\infty, 
\end{align}
and for  $x \in C_E[0,b]$ and $b>0$ we define 
\begin{align}
{\hat{\Lambda}}_{b} x(\cdot)= \Lambda (x(\cdot)), 0 \le t \le b. 
\end{align}
For every $b>0$ and $x \in C_E[0,\infty)$ we have for $y = \hat{\Lambda}(x)$ that
\begin{align}\label{hat lambda b}
\sup_{0 \le t \le b}q_{F}(y(t)) = \sup_{0 \le t \le b}q_{F}(\hat{\Lambda}(x)(t) = \sup_{0 \le t \le b}q_{F}(\Lambda(x(t))=\sup_{0 \le t \le b} q_{E}(x(t)),  
\end{align}
so $\hat {\Lambda}_b$ is a linear isometry from $ C_E[0,b]$ onto $C_F[0,b]$. Similarly,  $\hat \Lambda$ and $\hat{\Lambda}_b $ have linear inverses $\hat {\Lambda}^{-1}$ and $\hat{\Lambda}_{b} ^{-1} $ from  $C_F [0,\infty)$ onto $C_E[0,\infty)$ and $C_F[0,b]$ onto $C_E[0,b]$. Of course, $\Lambda^{-1}_{b}$ is also an isometry.
\bigskip

As before $\gamma$ is a non-degenerate mean zero Gaussian measure on the Borel subsets of $E$ and we define (as usual)
\begin{align}
\gamma^{\Lambda}(A) = \gamma(\Lambda^{-1}(A)). 
\end{align}
Then,  the probability $\gamma^{\Lambda}$ is a mean zero Gaussian measure on the Borel subsets of $F$, the supports of $\gamma$ and $\gamma^{\Lambda}$ are closed linear subspaces of $E$ and $F$, respectively, and they are linearly isometric under $\Lambda$. 
\bigskip

The $E$-valued $\gamma$-Brownian motion process on $(\Omega_{E},\mathcal{F},P_{\gamma})$ is defined as in Lemma \ref{existence of brownian}, and is indicated by
$W_{\gamma}=\{W_{\gamma}(t): t \ge 0\}$ with $W_{\gamma}(t,x)=x(t), x \in \Omega_{E}$.  The $E$-valued $\gamma$-Ornstein-Uhlenbeck process is given by
\begin{align}\label{gamma OU}
Y_{\gamma}(t,x)=: e^{-\frac{t}{2}}W_{\gamma}(e^{t},x)=  e^{-\frac{t}{2}}x(e^{t}), t \ge 0, x \in \Omega_E \subseteq C_E[0,\infty), 
\end{align}
and since the support of $\gamma$ is a closed subspace of E, the processes $W_{\gamma}$ and $Y_{\gamma}$ have support on the continuous functions on $[0,\infty)$ with values in that subspace.

Similarly, we let  $\Omega_F$ be the space of continuous functions $y$ from $[0,\infty)$ into $F$ such that $y(0)=0$, and $\mathcal{G}$ the $\sigma$-field of $\Omega_F$ generated by the functions $y \rightarrow y(t), 0 \le t < \infty$. Then, there is a probability $P_{\gamma^{\Lambda}}$ on $(\Omega_F,\mathcal{G})$ such that
the stochastic process  $W_{\gamma^{\Lambda}}=\{W_{\gamma^{\Lambda}}(t): t\ge 0\}$ defined on $(\Omega_F, \mathcal{G},P_{\gamma^{\Lambda}})$ by $W_{\gamma^{\Lambda}}(t,y)=y(t)$ is the $\gamma^{\Lambda}$-Brownian motion on $F$, and the stochastic process
\begin{align}\label{gamma hat lambda OU}
Y_{\gamma^{\Lambda}}(t,y)=: e^{-\frac{t}{2}}W_{\gamma^{\Lambda}}(e^{t},y)=  e^{-\frac{t}{2}}y(e^{t}), t \ge 0, y \in \Omega_F \subseteq C_F[0,\infty), 
\end{align}
is the $F$-valued $\gamma^{\Lambda}$-Ornstein-Uhlenbeck process with support the space of continuous functions on $[0,\infty)$ with values in the subspace of $F$ supporting $\gamma^{\Lambda}$.

{\bf Notation.} For $k \ge 1$ and $0 \le t \le 1$ we define the linear maps 
\begin{align}
\tau_k: \Omega_{E} \subseteq C_E[0,\infty) \rightarrow C_E[0,1], ~{\rm{and}}~{\hat{\tau}}_k: \Omega_{F} \subseteq C_F[0,\infty) \rightarrow C_F[0,1], 
\end{align}
by
\begin{align}\label{tau k and hat tau k}
\tau_k(x)(t)= e^{-\frac{t +(k-1)}{2}}x(e^{t+(k-1)})  ~ {\rm{and}}~\hat{\tau}_k(y)(t)= e^{-\frac{t +(k-1)}{2}}y(e^{t+(k-1)}),   
\end{align}
and the processes 
\begin{align}\label{x k and hat x k}
X_k(t,x)=\tau_k(x)(t), x \in \Omega_E, ~ {\rm{and}}~ {\hat{X}}_k(t,y)={\hat { \tau}}_k(y)(t), y \in \Omega_F. 
\end{align}
These maps are not only linear, but are continuous with respect to uniform convergence on compact subsets of $[0,\infty)$.

\begin{subseclem}\label{change variables}  $ P_{\gamma^{\Lambda}}=(P_{\gamma} )^{\hat{\Lambda}} $ on $(\Omega_F, \mathcal{G})$ and $P_{\gamma}= P_{({\gamma^{\Lambda}})^{\Lambda^{-1}}}=(P_{{\gamma} ^{\Lambda}})^{\hat{\Lambda}^{-1}} $ on $(\Omega_E, \mathcal{F})$, where $\hat \Lambda$ is defined in (\ref{hat lambda}). Hence $P_{\gamma^{\Lambda}}$ is the distribution of $\hat{\Lambda}(W_{\gamma})$. Furthermore, if
\begin{align}\label{define J}
J(y)=[h\circ \tau_1 ({\hat{\Lambda}}^{-1}(y))][h\circ \tau_k ({\hat{\Lambda}}^{-1}(y))], 
\end{align}
where  $h \in C_E^{*}[0,1]$ and $y \in \Omega_F$, then
\begin{align} \label{change variable J}
\int_{\Omega_F} J(y) dP_{{\gamma}^{\Lambda}}(y)= 
\int_{\Omega_F} J(y) dP_{\gamma}^{\hat{\Lambda}}(y) =\int_{\Omega_E} J({\hat{\Lambda}}(x))dP_{\gamma}(x)
\end{align}
\begin{align*}
= \int_{\Omega_E}[ h \circ \tau_{1}(x)][ h \circ \tau_{k}(x)]dP_{\gamma}(x).
\end{align*}
Moreover, for $y \in \Omega_F$ and $h \in C_E^{*}[0,1]$, $J(\cdot)$ is such that
\begin{align}\label{define J two}
J(y)= [ h \circ {\hat{\Lambda}_{1}}^{-1}({\hat{\tau}_1}(y)][ h \circ {\hat{\Lambda}_{1}}^{-1}({\hat{\tau}_k}(y)]~{\rm{with}}~ h \circ {\hat{\Lambda}_{1}^{-1}} \in C_F^{*}[0,1]. 
\end{align}
Hence, for $\hat{h}= h \circ {\hat{\Lambda}_{1}^{-1}}$ we have
\begin{align}\label{change J two}
 \int_{\Omega_F}[ \hat{h} \circ \hat{ \tau}_{1}(y)][ \hat{h} \circ \hat{\tau}_{k}(y)]dP_{\gamma^{\Lambda}}(y)= \int_{\Omega_E}[ h \circ \tau_{1}(x)][ h \circ \tau_{k}(x)]dP_{\gamma}(x).
\end{align}
 \end{subseclem}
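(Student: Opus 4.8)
The plan is to first establish the measure identity $P_{\gamma^{\Lambda}}=(P_{\gamma})^{\hat{\Lambda}}$ and then deduce everything else from it together with the change-of-variables formula for image measures. First I would observe that $\hat{\Lambda}$ maps $\Omega_E$ into $\Omega_F$ (since $\Lambda$ is a continuous linear bijection fixing the origin) and is $\mathcal{F}/\mathcal{G}$-measurable, because for each $t\ge 0$ the map $x\mapsto (\hat{\Lambda}x)(t)=\Lambda(x(t))$ is $\mathcal{F}$-measurable and $\mathcal{G}$ is generated by the coordinate evaluations; hence $(P_{\gamma})^{\hat{\Lambda}}$ is a well-defined Borel probability on $(\Omega_F,\mathcal{G})$. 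To identify it with $P_{\gamma^{\Lambda}}$ it suffices to match finite-dimensional distributions. For $0=t_0<t_1<\cdots<t_n$, under $P_{\gamma}$ the increments $x(t_j)-x(t_{j-1})$ are independent with $x(t_j)-x(t_{j-1})$ distributed $\gamma_{t_j-t_{j-1}}$, so under $(P_{\gamma})^{\hat{\Lambda}}$ the increments $y(t_j)-y(t_{j-1})=\Lambda\bigl(x(t_j)-x(t_{j-1})\bigr)$ are independent (linear image of independent vectors) with law $(\gamma_{t_j-t_{j-1}})^{\Lambda}$. Since $\Lambda$ commutes with dilations one has $(\gamma_{s})^{\Lambda}=(\gamma^{\Lambda})_{s}$ directly from $\gamma^{\Lambda}(A)=\gamma(\Lambda^{-1}A)$ and $\gamma_{s}(A)=\gamma(A/\sqrt{s})$, and this is exactly the finite-dimensional structure defining $P_{\gamma^{\Lambda}}$. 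Applying the same argument to the isometry $\Lambda^{-1}$ (so that $(\gamma^{\Lambda})^{\Lambda^{-1}}=\gamma$) gives $P_{\gamma}=P_{(\gamma^{\Lambda})^{\Lambda^{-1}}}$, and since $\hat{\Lambda}^{-1}\circ\hat{\Lambda}$ is the identity on $\Omega_E$ also $(P_{\gamma^{\Lambda}})^{\hat{\Lambda}^{-1}}=P_{\gamma}$. That $P_{\gamma^{\Lambda}}$ is the law of $\hat{\Lambda}(W_{\gamma})$ is then just a restatement, since $W_{\gamma}(t,x)=x(t)$.

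Next I would treat (\ref{change variable J}). The function $J$ of (\ref{define J}) is $\mathcal{G}$-measurable and $P_{\gamma^{\Lambda}}$-integrable, being built from the continuous maps $\hat{\Lambda}^{-1},\tau_1,\tau_k$ and the continuous linear functional $h$, hence a product of two real Gaussian variables. The first equality in (\ref{change variable J}) is the identity from the previous step; the second equality $\int_{\Omega_F}J\,dP_{\gamma}^{\hat{\Lambda}}=\int_{\Omega_E}J(\hat{\Lambda}(x))\,dP_{\gamma}(x)$ is the standard image-measure formula $\int f\,d(\nu\circ T^{-1})=\int f\circ T\,d\nu$ with $T=\hat{\Lambda}$ and $\nu=P_{\gamma}$. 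The concluding line follows because $\hat{\Lambda}^{-1}(\hat{\Lambda}(x))=x$, whence $J(\hat{\Lambda}(x))=[h\circ\tau_1(x)][h\circ\tau_k(x)]$.

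Finally, for (\ref{define J two}) I would verify the conjugation identity $\tau_k(\hat{\Lambda}^{-1}(y))=\hat{\Lambda}_1^{-1}(\hat{\tau}_k(y))$ as elements of $C_E[0,1]$: for $t\in[0,1]$ the left-hand side evaluates to $e^{-(t+k-1)/2}\Lambda^{-1}(y(e^{t+k-1}))$, while, $\Lambda^{-1}$ being linear, the right-hand side $\Lambda^{-1}(\hat{\tau}_k(y)(t))=\Lambda^{-1}\bigl(e^{-(t+k-1)/2}y(e^{t+k-1})\bigr)$ gives the same value; applying $h$ yields (\ref{define J two}), and $h\circ\hat{\Lambda}_1^{-1}\in C_F^{*}[0,1]$ because $\hat{\Lambda}_1^{-1}\colon C_F[0,1]\to C_E[0,1]$ is a bounded linear map and $h\in C_E^{*}[0,1]$. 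Then (\ref{change J two}) results from substituting the representation (\ref{define J two}) of $J$ into the left side of (\ref{change variable J}) and reading off its right side. No serious obstacle is expected; the only points needing care are the $\mathcal{F}/\mathcal{G}$-measurability of $\hat{\Lambda}$, the bookkeeping of which of $\hat{\Lambda},\hat{\Lambda}_1,\tau_k,\hat{\tau}_k$ maps between which pair of spaces, and the commutation of the dilation $\gamma\mapsto\gamma_s$ with the transport $\gamma\mapsto\gamma^{\Lambda}$.
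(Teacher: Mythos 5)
Your proposal is correct and follows essentially the same route as the paper: both identify $P_{\gamma^{\Lambda}}$ with $(P_{\gamma})^{\hat{\Lambda}}$ by checking that the image process has independent increments with the correct scaled laws (your computation $(\gamma_s)^{\Lambda}=(\gamma^{\Lambda})_s$ is the same fact the paper verifies as $P_{\gamma}(\Lambda(x(t+s)-x(t))\in A)=\gamma^{\Lambda}(A/\sqrt{s})$), and then both deduce (\ref{change variable J})--(\ref{change J two}) from the standard image-measure formula and the conjugation identity $\tau_k\circ\hat{\Lambda}^{-1}=\hat{\Lambda}_1^{-1}\circ\hat{\tau}_k$. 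Your write-up is if anything slightly more explicit than the paper's on the measurability of $\hat{\Lambda}$ and on verifying that conjugation identity pointwise.
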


\begin{proof} By definition of the Brownian motion induced by $\gamma^{\Lambda}$, for $0=t_0<t_1\cdots< t_n$ the random vectors
$$
y(t_j)-  y(t_{j-1}), j=1,\cdots ,n, ~{\rm{are~independent}}, 
$$
and $y(t_j)- y(t_{j-1})$ has distribution $\gamma_{t_j - t_{j-1}}^{\Lambda}$ on $F$, where $\gamma_{s}^{\Lambda}(A)= \gamma^{\Lambda}(A/\sqrt{s})$ for Borel subsets $A$ of $F$ when $s>0$ and $\gamma_0=\delta_0.$ In particular, the stochastic process  $W_{{\gamma}^{\Lambda}}=\{W_{{\gamma}^{\Lambda}}(t): t\ge 0\}$ defined on $(\Omega_F, \mathcal{G},P_{{\gamma}^{\Lambda}})$ by $W_{{\gamma}^{\Lambda}}(t,y)=y(t)$ has stationary independent mean zero Gaussian increments  and an increment of length $s>0$ has distribution given by ${{\gamma}^{\Lambda}} (\cdot/\sqrt{s})$.

We also have by definition that the stochastic process ${\hat{\Lambda}} (W_{\gamma})=: \{{\hat{\Lambda}} (W_{\gamma})(t): t \ge 0\}$ has law on $(\Omega_F,\mathcal{G})$ given by $(P_{\gamma})^{\hat{\Lambda}}$, where
\begin{align}\label{hat lambda two}
{\hat{\Lambda}} (W_{\gamma})(t)=\Lambda(W_{\gamma}(t)) , t \ge 0. 
\end{align}
Hence the increments of ${\hat{\Lambda}} (W_{\gamma})$ based on $0=t_0<t_1\cdots< t_n$ are 
\begin{align*}
{\hat{\Lambda}} (W_{\gamma})(t_j)-  {\hat{\Lambda}} (W_{\gamma})(t_{j-1})= \Lambda(W_{\gamma}(t_j) - W_{\gamma}(t_{j-1})),~ j=1,\cdots ,n.
\end{align*}
In addition, they are independent and an increment over an interval of length $s>0$ is such that 
$$
P_{\gamma}(x\in \Omega_E: \Lambda(x(t+s) -x(t)) \in A)= P_{\gamma}(x(1) \in {\Lambda}^{-1}(A/\sqrt{s}))= \gamma^{\Lambda}(A/\sqrt{s}), 
$$
where $A$ is any Borel subset of $F$. Thus $(P_{\gamma} )^{\hat{\Lambda}} = P_{\gamma^{\Lambda}}$ on $(\Omega_F, \mathcal{G})$. Furthermore, we therefore also have
(\ref{change variable J}) by the standard change of variables formula. Finally, given (\ref{define J}), (\ref{define J two}) is immediate from the definitions of the mappings, and 
(\ref{change J two}) follows from (\ref{change variable J}).
\end{proof}

Since $E$ is separable, the Banach-Mazur Theorem allows us to take the linearly isometric Banach space $F$ to be a subspace of the sup-norm Banach space $C[0,1]$ with norm $q_F(y)= \sup_{0\le t \le1}|y(t)|$. Furthermore, since $\gamma$ is a non-degenerate centered Gaussian measure on the Borel sets of $E$, its support is a non-degenerate closed linear subspace of $E$, and
${\gamma}^{\Lambda}$ is a Gaussian measure on the Borel subsets of $C[0,1]$ whose support is a closed linear subspace $F=\Lambda(E)$. Hence applying (\ref{define J two}) 
and (\ref{change J two}) to the conclusion of the following lemma, we will have a decay of the covariances suitable for every linear functional on $C_E[0,1]$. That is, the following lemma establishes the results for all linear functionals on $C_F[0,1]$, and since ${\hat{\Lambda}_{1}^{-1}}$ is an linear isometry from $ C_F[0,1]$ onto $C_E[0,1]$  the norms
\begin{align}\label{equal norms}
||h||^{*}=: ||h||_{C_{E}^{*}[0,1]}~{\rm{and}}~|| h \circ {\hat{\Lambda}_{1}^{-1}}||^{*}=: || h \circ {\hat{\Lambda}_{1}^{-1}}||_{C_{F}^{*}[0,1]} 
\end{align}
are equal (see \ref{hat lambda b}). Lemma \ref{decay rate lemma} below summarizes this, and hence the desired rate of decay for the covariances also holds for all linear functionals on $C_E[0,1]$.

\begin{subseclem}\label{change space lemma}  Let $\hat{\gamma}$ be a non-degenerate mean zero Gaussian measure on the sup-norm Banach space $C[0,1]$ whose support is a closed subspace $F$ of $C[0,1]$, and $Y_{\hat{\gamma}}=:\{Y_{\hat{\gamma}}(t): t \ge 0\}$ be the $F$-valued sample continuous Ornstein-Uhlenbeck process generated by $\hat{\gamma}$ on $F$ as in (\ref{gamma hat lambda OU}). Also assume the processes $\{\hat{X}_k(t): 0 \le t \le 1\}$ are as in (\ref{x k and hat x k}). Then, for each $\hat{h} \in C_F^{*}[0,1]$ and $k \ge  2$
\begin{align}\label{formula decay one}
 |E_{P_{\hat{\gamma}}}\big(\hat{h}(\hat{X}_1(\cdot))\hat{h}(\hat{X}_k(\cdot)\big)| \le e^{-\frac{k-2}{2}} \sigma^2({\hat{\gamma}}) ||{\hat h}||_{C_F^{*}[0,1]}^{2},
\end{align}
and 
\begin{align}\label{formula decay two}
E_{P_{\hat{\gamma}}} \big( \hat{h}^{2}(\hat{X}_1(\cdot))\big) \le ||{\hat h}||_{C_F^{*}[0,1]}^{2} \sigma^2({\hat{\gamma}}), 
\end{align}
where $\sigma^{2}(\hat{\gamma})$ is defined as in Lemma \ref{link gamma sigma}.
\end{subseclem}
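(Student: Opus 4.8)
\emph{Proof proposal.} The plan is to reduce both inequalities to term-by-term estimates on the Riemann--Stieltjes approximants of $\hat{h}$, using the covariance of the $F$-valued Ornstein--Uhlenbeck process. First I would record, from (\ref{tau k and hat tau k})--(\ref{x k and hat x k}) and the normalization $W_{\hat{\gamma}}(s,y)=y(s)$, that $\hat{X}_1(t)=Y_{\hat{\gamma}}(t)$ for $t\in[0,1]$ and, more generally, $\hat{X}_k(t)=Y_{\hat{\gamma}}(t+k-1)$ for $t\in[0,1]$ and $k\ge1$. Then, writing $Y_{\hat{\gamma}}(t)=e^{-t/2}W_{\hat{\gamma}}(e^{t})$ and using the mean-zero, independent-increments, and scaling properties of $\hat{\gamma}$-Brownian motion from Lemma \ref{existence of brownian}, I would establish that for $\theta,\psi\in F^{*}$ and $0\le s\le t$,
\[
E_{P_{\hat{\gamma}}}\big[\theta(Y_{\hat{\gamma}}(s))\psi(Y_{\hat{\gamma}}(t))\big]=e^{-(t-s)/2}\int_{F}\theta(z)\psi(z)\,d\hat{\gamma}(z);
\]
indeed, splitting $W_{\hat{\gamma}}(e^{t})=W_{\hat{\gamma}}(e^{s})+\big[W_{\hat{\gamma}}(e^{t})-W_{\hat{\gamma}}(e^{s})\big]$, the second term is mean zero and independent of the first, while $W_{\hat{\gamma}}(e^{s})$ has law $\hat{\gamma}_{e^{s}}$. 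Combining this with Cauchy--Schwarz in $L^{2}(\hat{\gamma})$ and the definition of $\sigma(\hat{\gamma})$ from Lemma \ref{link gamma sigma} gives $\big|E_{P_{\hat{\gamma}}}[\theta(Y_{\hat{\gamma}}(s))\psi(Y_{\hat{\gamma}}(t))]\big|\le e^{-(t-s)/2}\sigma^{2}(\hat{\gamma})\,q_{F}^{*}(\theta)\,q_{F}^{*}(\psi)$.

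For (\ref{formula decay one}) I would then invoke the Bochner--Taylor representation: for $\hat{h}\in C_{F}^{*}[0,1]$ there is $g$ of $F^{*}$-bounded variation with $V(g,F^{*})=\|\hat{h}\|_{C_{F}^{*}[0,1]}$ such that the approximating sums $f_{i}(x)=\sum_{j}\phi_{j}(x(\tau_{j}))$ of (\ref{fi(x)}), with $\phi_{j}=g(t_{j})-g(t_{j-1})$ and $\sum_{j}q_{F}^{*}(\phi_{j})\le V(g,F^{*})$, satisfy $f_{i}(x)\to\hat{h}(x)$ for every $x\in C_{F}[0,1]$ and $|f_{i}(x)|\le V(g,F^{*})\|x\|$. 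Since $Y_{\hat{\gamma}}$ is mean-zero Gaussian (Lemma \ref{OU gaussian}), for $k\ge2$
\[
E_{P_{\hat{\gamma}}}\big[f_{i}(\hat{X}_{1})f_{i}(\hat{X}_{k})\big]=\sum_{j,l}e^{-((\tau_{l}+k-1)-\tau_{j})/2}\int_{F}\phi_{j}(z)\phi_{l}(z)\,d\hat{\gamma}(z),
\]
and since $\tau_{j}\le1$ and $\tau_{l}\ge0$ the exponent is at least $k-2\ge0$. Bounding each summand by $e^{-(k-2)/2}\sigma^{2}(\hat{\gamma})q_{F}^{*}(\phi_{j})q_{F}^{*}(\phi_{l})$ and summing yields $\big|E_{P_{\hat{\gamma}}}[f_{i}(\hat{X}_{1})f_{i}(\hat{X}_{k})]\big|\le e^{-(k-2)/2}\sigma^{2}(\hat{\gamma})\big(\sum_{j}q_{F}^{*}(\phi_{j})\big)^{2}\le e^{-(k-2)/2}\sigma^{2}(\hat{\gamma})\|\hat{h}\|_{C_{F}^{*}[0,1]}^{2}$. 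Letting $i\to\infty$ and applying dominated convergence --- legitimate because $|f_{i}(\hat{X}_{1})f_{i}(\hat{X}_{k})|\le V(g,F^{*})^{2}\|\hat{X}_{1}\|\,\|\hat{X}_{k}\|$ is integrable, since by Lemmas \ref{OU gaussian} and \ref{f(X) gaussian} the $\hat{X}_{k}$ induce centered Gaussian measures on the separable Banach space $C_{F}[0,1]$ and hence have finite second moments by (\ref{gaussian variance}) --- gives (\ref{formula decay one}).

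The bound (\ref{formula decay two}) follows the same way: $E_{P_{\hat{\gamma}}}[f_{i}^{2}(\hat{X}_{1})]=\sum_{j,l}e^{-|\tau_{j}-\tau_{l}|/2}\int_{F}\phi_{j}(z)\phi_{l}(z)\,d\hat{\gamma}(z)$, each summand has absolute value at most $\sigma^{2}(\hat{\gamma})q_{F}^{*}(\phi_{j})q_{F}^{*}(\phi_{l})$ (using $e^{-|\tau_{j}-\tau_{l}|/2}\le1$), so the sum is at most $\sigma^{2}(\hat{\gamma})\big(\sum_{j}q_{F}^{*}(\phi_{j})\big)^{2}\le\sigma^{2}(\hat{\gamma})\|\hat{h}\|_{C_{F}^{*}[0,1]}^{2}$, and one passes to the limit in $i$ exactly as above. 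The one delicate point is that $\hat{h}$ is accessible only through its Bochner--Taylor approximants, so every estimate must be made first for the finite sums $f_{i}$ and then transported to $\hat{h}$; this transport rests on the uniform bound $|f_{i}(x)|\le V(g,F^{*})\|x\|$ together with the finiteness of the Gaussian second moments, after which the remaining bookkeeping with the Ornstein--Uhlenbeck covariance is routine.
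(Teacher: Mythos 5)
Your proposal is correct, and the key analytic step --- decomposing $W_{\hat{\gamma}}(e^{t})$ into $W_{\hat{\gamma}}(e^{s})$ plus an independent mean-zero increment, then using scaling and Cauchy--Schwarz to get the factor $e^{-(t-s)/2}\sigma^2(\hat{\gamma})$ --- is exactly the engine of the paper's proof. Where you diverge is in how the functional $\hat{h}$ is represented. The paper exploits the hypothesis that $F$ is a subspace of $C[0,1]$: it identifies $C_F[0,1]$ with a subspace of $C(I\times I)$, extends $\hat{h}$ by Hahn--Banach to a functional $\hat{\hat h}$ on $C(I\times I)$, represents $\hat{\hat h}$ by a signed Borel measure $\lambda_{\hat{\hat h}}$ via the classical Riesz theorem, and then applies Fubini to the double integral against $\lambda_{\hat{\hat h}}\otimes\lambda_{\hat{\hat h}}$, with the pointwise kernel bound $|E[\hat{\tau}_1(y)(t,s)\hat{\tau}_k(y)(v,u)]|\le e^{-(k-2)/2}\sigma^2(\hat{\gamma})$. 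You instead use the Bochner--Taylor representation (which the paper quotes for Lemma \ref{f(Xk) stationary}) to approximate $\hat{h}$ by finite Riemann--Stieltjes sums $f_i$, bound the resulting double sum term by term using $\sum_j q_F^{*}(\phi_j)\le V(g,F^{*})=\|\hat{h}\|_{C_F^{*}[0,1]}$, and pass to the limit by dominated convergence, correctly justified via the uniform bound $|f_i(x)|\le V(g,F^{*})\|x\|$ and the integrability of $\|\hat X_1\|\,\|\hat X_k\|$ from (\ref{gaussian variance}). The trade-off: the paper's countably additive Riesz measure lets Fubini do the interchange in one shot with no limiting argument, whereas your route requires the extra dominated-convergence step; on the other hand, your argument never uses that $F$ sits inside $C[0,1]$, so it proves the covariance decay directly for an arbitrary separable $E$ in place of $F$ --- which would render the Banach--Mazur embedding, Lemma \ref{change variables}, and the transport step in Lemma \ref{decay rate lemma} unnecessary.
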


\begin{proof} Identifying $C_{C[0,1]}[0,\infty)$ with $C([0,\infty)\times[0,1])$ we then have
$$
\Omega_F= \{y(\cdot,\cdot): y(\cdot,s) \in C_F[0,\infty), y(t,\cdot) \in F  \subseteq C[0,1], 0\le t <\infty \}
$$
$$
\subseteq \{y(\cdot,\cdot): y(\cdot,\cdot) \in C([0,\infty) \times[0,1])\},
$$
and by the Hahn-Banach theorem we let $\hat{\hat h}$ be a norm preserved extension of $\hat h$ from $C_{F}[0,1]$ to all of $C(I \times I)$, where $I=[0,1]$. Then, by the Riesz Representation theorem there is a signed measure $\lambda_{\hat{\hat  h}}$ on $I \times I$, where $\lambda_{\hat{\hat h}}$ has finite total variation $| \lambda_{\hat{\hat h}}|= ||{\hat{\hat h}}||_{C^{*}(I \times I)}=||{\hat{\hat h}}||_{C_F^{*}[0,1]} = ||\hat {h}||_{C_F^{*}[0,1]} $, and
\begin{align}\label{riesz formula}
{\hat{\hat  h}}(y(\cdot,\cdot))= \int_{I\times I} y(t,s) d\lambda_{\hat {\hat h}}(t,s), y(\cdot,\cdot) \in C(I \times I). 
\end{align}
Since we are assuming $Y_{\hat{\gamma}}$ is defined on $(\Omega_F,\mathcal{G},P_{\hat{\gamma}})$ as in (\ref{gamma hat lambda OU}), we have for $y \in \Omega_F, t \in [0,\infty), s\in [0,1],$ with  $P_{\hat{\gamma}}$-probability one that
\begin{align}\label{sample y hat gamma}
Y_{\hat{\gamma}}(t,y)(s) = e^{-\frac{t}{2}}y(e^t,s).
\end{align}
Using the notation defined in  (\ref{tau k and hat tau k}) and (\ref{x k and hat x k}), and identifying $y(t)(s)$ with $y(t,s)$, we have for  $k \ge 1, y \in \Omega_{F}$ that for $0\le t, s \le 1,$ with $P_{\hat{\gamma}}$-probability one 
\begin{align}\label{hat tau k = hat x k}
 \hat{\tau}_{k}(y)(t)(s) =  \hat{\tau}_k(y)(t,s))  =\hat{X}_k(t,y)(s)=Y_{{\hat{\gamma}}}(k-1+t,y)(s).  
\end{align}
Therefore,
\begin{align*}
E_{P_{\hat{\gamma}}}[\hat{h}(\hat{X}_1(\cdot))\hat{h}(\hat{X}_k(\cdot)]=E_{P_{\hat{\gamma}}}[{\hat{\hat h}}(\hat{X}_1(\cdot)) {\hat{\hat h}} (\hat{X}_k(\cdot)]=E_{P_{\hat{\gamma}}}[{\hat{\hat h}}(\hat{\tau_1}(y)){\hat{\hat h}}(\hat{\tau}_k(y))],
\end{align*}
and hence by (\ref{sample y hat gamma}) and (\ref{hat tau k = hat x k}) we have
\begin{align}\label{covariance and riesz}
E_{P_{\hat{\gamma}}}[\hat{h}(\hat{X}_1(\cdot)) \hat{h}(\hat{X}_k(\cdot)]=E_{P_{\hat{\gamma}}}[\int_{I\times I} \hat{\tau}_1(y)(t,s)d\lambda_{\hat{\hat h}}(t,s) \int_{I \times I} \hat{\tau}_k(y)(v,u)d\lambda_{\hat{\hat h}}(v,u)].
\end{align}

Moreover, since $\gamma$-Brownian motion has independent mean zero Gaussian increments,  
for $k \ge 2$ we have
$$
y(e^{k-1+v},\cdot)- y(e^{t},\cdot) ~{\rm {independent~of}}~ y(e^t,\cdot),
$$
and hence $y(e^{k-1+v},u)- y(e^{t},u)$ and $ y(e^t,s)$ are independent mean zero random variables. Therefore, the integrability of the Gaussian random variables and vectors involved implies
\begin{align}\label{covariance riesz inequality one}
 e^{\frac{t+k-1+v}{2}} |E_{P_{\hat{\gamma}}}[\hat{\tau}_1(y)(t,s)\hat{\tau}_k(y)(v,u)]|
\end{align}
$$
= |E_{P_{\hat{\gamma}}} \Big[y(e^t,s)[y(e^{k-1+v}, u) - y( e^t,u)] + y(e^t,s)y(e^t,u)\Big]|= |E_{P_{\hat{\gamma}}}\Big[ y(e^t,s)y(e^t,u)\Big]| 
$$
$$
\le\Big[E_{P_{\hat{\gamma}}}[ y^2(e^t,s)]\Big]^{\frac{1}{2}}  \Big[E_{P_{\hat{\gamma}}}[ y^2(e^t,u)]\Big]^{\frac{1}{2}} =e^{t} \Big[E_{P_{\hat{\gamma}}}[ y^2(1,s)]\Big]^{\frac{1}{2}}  \Big[E_{P_{\hat{\gamma}}}[ y^2(1,u)]\Big]^{\frac{1}{2}} 
$$
$$
= e^{t} \Big[\int_{F}y^{2}(1,s) d\hat{\gamma}(y) \Big]^{\frac{1}{2}}  \Big[\int_{F}[ y^2(1,u)] d{\hat{\gamma}}(y) \Big]^{\frac{1}{2}}\le  e^{t}\sigma^2({\hat{\gamma}}),
$$
where the last inequality holds by the definition of $ \sigma^2({\hat{\gamma}})$ as in Lemma \ref{link gamma sigma} and that the evaluation maps at $s,u$ are linear functionals of norm one. Therefore, for $t,v \in I$
\begin{align}\label{covariance riesz inequality two}
|E_{P_{\hat{\gamma}}}[\hat{\tau}_1(y)(t,s)\hat{\tau}_k(y)(v,u)]| \le e^{-\frac{k-1+v-t}{2}} \sigma^2({\hat{\gamma}}) \le e^{-\frac{k-2}{2}} \sigma^2({\hat{\gamma}}).
\end{align}

Combining  (\ref{covariance and riesz}) and (\ref{covariance riesz inequality two}), Fubini's theorem implies 
\begin{align}\label{covariance riesz inequality three}
|E_{P_{\hat{\gamma}}}[\hat{h}(\hat{X}_1(\cdot)) \hat{h}(\hat{X}_k(\cdot)]| \le  e^{-\frac{k-2}{2}} \sigma^2({\hat{\gamma}}) | \lambda_{\hat{\hat h}}|^2= e^{-\frac{k-2}{2}} \sigma^2({\hat{\gamma}}) ||{\hat h}||_{C_F^{*}[0,1]}^{2},  
\end{align}
and (\ref{formula decay one}) holds. The proof of (\ref{formula decay two}) is similar. That is, from (\ref{covariance and riesz}) we have
$$
E_{P_{\hat{\gamma}}}[\hat{h}^{2}(\hat{X}_1(\cdot))]= E_{P_{\hat{\gamma}}} \Big [ [\int_{I\times I} \hat{\tau}_1(y)(t,s)d\lambda_{\hat{\hat h}}(t,s)]^{2}\Big]
$$
$$
\le E_{P_{\hat{\gamma}}} \Big [ [\int_{I\times I} | \hat{\tau}_1(y)(t,s)|d|\lambda_{\hat{\hat h}}|(t,s)]^{2}\Big] \le E_{P_{\hat{\gamma}}} \Big [ [\int_{I\times I} e^{-t}y^{2}(e^{t} ,s) d|\lambda_{\hat{\hat h}}|(t,s)]\Big] |\lambda_{\hat{\hat h}}|(I \times I)
$$
$$
=\int_{I\times I} e^{-t} E_{P_{\hat{\gamma}}}[y^{2}(e^{t} ,s)] d|\lambda_{\hat{\hat h}}|(t,s) |\lambda_{\hat{\hat h}}|(I \times I)=\int_{I\times I}  E_{P_{\hat{\gamma}}}[y^{2}(1 ,s)] d|\lambda_{\hat{\hat h}}|(t,s) |\lambda_{\hat{\hat h}}|(I \times I)
$$
$$
\le [ |\lambda_{\hat{\hat h}}|(I \times I)]^{2}   \sigma^2({\hat{\gamma}})  = ||{\hat h}||_{C_F^{*}[0,1]}^{2}  \sigma^2({\hat{\gamma}}),
$$
which establishes (\ref{formula decay two}).
\end{proof}

\begin{subseclem}\label{decay rate lemma} Let $Y_{\gamma}=: \{Y_{\gamma}(t): t \ge 0\}$ be an $E$-valued sample continuous Ornstein-Uhlenbeck process generated by the non-degenerate mean zero Gaussian measure $\gamma$ on $E$ as in (\ref{gamma OU}), and assume the processes $\{X_k(t): 0 \le t \le 1\}$ are as in (\ref{x k and hat x k}). Then, for each $h \in C_E^{*}[0,1]$  and $k \ge 2$
\begin{align}\label{decay rate formula three}
 |E_{P_{\gamma}}\big(h(X_1(\cdot))h(X_k(\cdot)\big)| \le  e^{-\frac{k-2}{2}} \sigma^2({\gamma}) ||h||_{C_E^{*}[0,1]}^{2}, 
\end{align}
and 
\begin{align}\label{decay rate formula four}
 E_{P_{\gamma}} \big( h^{2}(X_1(\cdot))\big) \le  ||h||_{C_E^{*}[0,1]}^{2}  \sigma^2({\gamma}).
\end{align}
\end{subseclem}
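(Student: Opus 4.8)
The plan is to reduce the statement to Lemma~\ref{change space lemma} by transporting the whole construction through a linear isometry into $C[0,1]$. Since $E$ is separable, the Banach--Mazur theorem provides a linear isometry $\Lambda$ from $E$ onto a closed subspace $F=\Lambda(E)$ of the sup-norm space $C[0,1]$. I would let $\hat\gamma=\gamma^{\Lambda}$ be the image measure, which is a non-degenerate mean zero Gaussian measure on $C[0,1]$ with support $F$; by Lemma~\ref{change variables} the law of the $\gamma$-Ornstein--Uhlenbeck process on $\Omega_E$ (and of the associated processes $X_k$) is carried by $\hat\Lambda$ onto the law of the $\hat\gamma$-Ornstein--Uhlenbeck process on $\Omega_F$ (and of the $\hat X_k$).

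Then, for a fixed $h\in C_E^{*}[0,1]$ I would set $\hat h=h\circ\hat\Lambda_{1}^{-1}\in C_F^{*}[0,1]$ and invoke (\ref{define J two}) and (\ref{change J two}) of Lemma~\ref{change variables}, which give
\[
E_{P_{\gamma}}\big(h(X_1(\cdot))h(X_k(\cdot))\big)=E_{P_{\hat\gamma}}\big(\hat h(\hat X_1(\cdot))\hat h(\hat X_k(\cdot))\big),
\]
and, taking $k=1$ in the definition of $J$, also $E_{P_{\gamma}}\big(h^2(X_1(\cdot))\big)=E_{P_{\hat\gamma}}\big(\hat h^2(\hat X_1(\cdot))\big)$. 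Because $\hat\Lambda_{1}^{-1}$ is a linear isometry of $C_F[0,1]$ onto $C_E[0,1]$, the dual norms coincide, $\|\hat h\|_{C_F^{*}[0,1]}=\|h\|_{C_E^{*}[0,1]}$, by (\ref{equal norms}). I would also check that $\sigma^2(\hat\gamma)=\sigma^2(\gamma)$: this follows since $\Lambda$ maps the generating Hilbert space $H_{\gamma}$ isometrically onto $H_{\hat\gamma}$, so the quantity appearing in Lemma~\ref{link gamma sigma} is unchanged; equivalently, $\int_F f^2\,d\hat\gamma=\int_E (f\circ\Lambda)^2\,d\gamma$ with $\|f\|_{F^*}=\|f\circ\Lambda\|_{E^*}$.

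With these identifications in place, applying Lemma~\ref{change space lemma} to $\hat\gamma$ and $\hat h$ yields, for $k\ge 2$,
\[
\big|E_{P_{\hat\gamma}}\big(\hat h(\hat X_1(\cdot))\hat h(\hat X_k(\cdot))\big)\big|\le e^{-\frac{k-2}{2}}\sigma^2(\hat\gamma)\,\|\hat h\|_{C_F^{*}[0,1]}^{2},
\]
which is precisely (\ref{decay rate formula three}) after the above substitutions, and likewise (\ref{formula decay two}) produces (\ref{decay rate formula four}). All the analytic content --- the covariance computation using independent increments and scaling of Brownian motion together with the Riesz representation of functionals on $C(I\times I)$ --- has already been carried out in Lemma~\ref{change space lemma}; the main thing to get right here is the bookkeeping of $\Lambda$, and in particular the verification that it preserves both the dual norm and $\sigma^2$, which is the step where I would be most careful.
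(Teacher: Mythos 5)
Your proposal is correct and follows essentially the same route as the paper's own proof: Banach--Mazur to embed $E$ isometrically as $F\subseteq C[0,1]$, Lemma \ref{change variables} to transport the covariance to the $\hat\gamma$-setting, Lemma \ref{change space lemma} for the actual estimate, and the identities $\|\hat h\|_{C_F^{*}[0,1]}=\|h\|_{C_E^{*}[0,1]}$ and $\sigma^2(\hat\gamma)=\sigma^2(\gamma)$ to translate back. Your explicit justification of $\sigma^2(\hat\gamma)=\sigma^2(\gamma)$ via $\int_F f^2\,d\hat\gamma=\int_E(f\circ\Lambda)^2\,d\gamma$ is a small but welcome addition to what the paper merely asserts.
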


\begin{proof} Since $E$ is a separable Banach space, the Banach-Mazur theorem shows there is a linear isometry $\Lambda$ mapping $E$ onto $F$, where $F$ is a closed subspace of the sup-norm Banach space $C[0,1]$, and the $q$-norm on $E$ maps to the sup-norm on $F$. Using  (\ref{change variable J}) and (\ref{define J two}) of Lemma \ref{change variables} with $\hat{\gamma} = \gamma^{\Lambda}$ and $P_{\hat{\gamma}} = P_{\gamma^{\Lambda}}$, we then have for $k\ge 2$ that 
\begin{align}\label{covariance equality two}
E_{P_{\gamma}}\big(h(X_1(\cdot))h(X_k(\cdot)\big)=E_{P_{\gamma^{\Lambda}} }\Big[ [h\circ \tau_1 ({\hat{\Lambda}}^{-1}(y))][h\circ \tau_k ({\hat{\Lambda}}^{-1}(y))] \Big] 
\end{align}
\begin{align*}
= E_{P_{\gamma^{\Lambda}} }\Big[ [h\circ {\hat{\Lambda}}^{-1}_{1} ({\hat{\tau}}_1 (y))][  h\circ {\hat{\Lambda}}^{-1}_{1} ({\hat{\tau}}_k (y))] \Big] 
=E_{P_{\hat{\gamma}} }\Big[ [\hat{h}({\hat{\tau}}_1 (y))][ \hat{h}({\hat{\tau}}_k (y))] \Big], 
\end{align*}
where $\hat{h}= h\circ {\hat{\Lambda}}^{-1}_{1}$. Combining (\ref{formula decay one}) of Lemma \ref{change space lemma} and (\ref{covariance equality two}) we have
\begin{align}\label{decay rate five}
E_{P_{\gamma}}\big(h(X_1(\cdot))h(X_k(\cdot)\big)  \le e^{-\frac{k-2}{2}} \sigma^2({\hat{\gamma}}) ||{\hat h}||_{C_F^{*}[0,1]}^{2}= e^{-\frac{k-2}{2}} \sigma^2(\gamma) ||h||_{C_E^{*}[0,1]}^{2},
\end{align}
where the equality in (\ref{decay rate five}) holds since(\ref{equal norms}) implies
$$
||\hat{h}||_{C_F^{*}[0,1]} =   || h \circ {\hat{\Lambda}_{1}^{-1}}||_{C_{F}^{*}[0,1]} = ||h||_{C_E^{*}[0,1]}, 
$$
and $\sigma^{2}(\gamma)= \sigma^{2}({\hat{\gamma}})$. Hence (\ref{decay rate formula three}) holds for $k \ge 2$, and the proof of (\ref{decay rate formula four}) is entirely similar. 
\end{proof} 

\begin{subsecrem}\label{Gamma=gamma} If $V= \{h \in C_{E}^{*}[0,1]: ||h||_{C_{E}^{*}[0,1] }\le 1\},$ then (\ref{decay rate formula four}) implies
\begin{align}\label{decay rate formula six}
\sup_{h \in V} E_{P_{\gamma}} \big( h^{2}(X_1(\cdot))\big) = \sup_{h \in V } \int_{C_E[0,1]} h^{2}(\tau_1(x))dP_{\gamma}(x) \le   \sigma^2({\gamma})= \Gamma^{2}_{\gamma}, 
\end{align}
where $\Gamma_{\gamma}$ is defined as in Lemma \ref{link gamma sigma}  and the equality follows from (\ref{gamma=sigma}). Furthermore, the inequality in (\ref{decay rate formula six}) is actually an equality, which can be seen as follows. That is, let $z \in C_E[0,1]$ and for $f \in E^{*}$ define $h_0 \in C_E^{*}[0,1]$ to be
$h_0(z)= f(z(0)).$ Hence, if $U= \{ z \in C_E[0,1]: \sup_{0\le t \le 1} q_E(z(t)) \le 1\},$ then $z \in U$ implies $q_E(z(0)) \le 1,$ and 
$||h_0||_{C^{*}_E[0,1]}=\sup_{z \in U} | h_0(z)| = \sup_{z \in U} | f(z(0)) |
\le 1$ when $||f||_{E^{*}}=1$. Letting $J=: \sup_{ ||h||_{C_E^{*}}[0,1] \le 1} E_{P_{\gamma}} \big( h^{2}(\tau_1(x))\big),$
we see
$$
J \ge \int_{C_E[0,1]} h^{2}_0(\tau_1(x))dP_{\gamma}(x) = \int_{C_E[0,1]} f^{2}(\tau_1(x)(0))dP_{\gamma}(x)= \int_{E} f^{2}_0(w)d{\gamma}(w).
$$

Therefore, for  $f=f_0$ as in Lemma \ref{link gamma sigma} with the Gaussian measure being $\gamma$, we have $||f_0||_{E^{*}}=1$ and $J \ge  \int_{E} f^{2}_0(x)d{\gamma}(x)= \sigma^{2}(\gamma)= \Gamma_{\gamma}^{2}.$
Thus equality holds in (\ref{decay rate formula six}) as claimed.
\end{subsecrem}

\bibliographystyle{amsalpha} 
\bibliography{mybib11-3-17,othbib2-20-18}

\providecommand{\bysame}{\leavevmode\hbox to3em{\hrulefill}\thinspace}
\providecommand{\MR}{\relax\ifhmode\unskip\space\fi MR }
\providecommand{\MRhref}[2]{%
  \href{http://www.ams.org/mathscinet-getitem?mr=#1}{#2}
}
\providecommand{\href}[2]{#2}
\begin{thebibliography}{GKZ81}

\bibitem[Ber62]{berman-LLN}
Simeon~M. Berman, \emph{A law of large numbers for the maximum in a stationary
  {G}aussian sequence}, Ann. Math. Statist. \textbf{33} (1962), 93--97.
  \MR{0133856}

\bibitem[Bre68]{breiman}
Leo Breiman, \emph{Probability}, Addison-Wesley Publishing Company, Reading,
  Mass.-London-Don Mills, Ont., 1968. \MR{0229267}

\bibitem[BT38]{bochner-taylor}
S.~Bochner and A.~E. Taylor, \emph{Linear functionals on certain spaces of
  abstractly-valued functions}, Ann. of Math. (2) \textbf{39} (1938), no.~4,
  913--944. \MR{1503445}

\bibitem[CK76]{carmona-kono}
Ren\'e Carmona and Norio K\^ono, \emph{Convergence en loi et lois du logarithme
  it\'er\'e pour les vecteurs gaussiens}, Z. Wahrscheinlichkeitstheorie und
  Verw. Gebiete \textbf{36} (1976), no.~3, 241--267. \MR{0494354}

\bibitem[DE56]{darling-erdos}
D.~A. Darling and P.~Erd{\"o}s, \emph{A limit theorem for the maximum of
  normalized sums of independent random variables}, Duke Math. J. \textbf{23}
  (1956), 143--155. \MR{0074712}

\bibitem[DE16]{darling-erdos-euclidean}
G.~Dierickx and U.~Einmahl, \emph{A general darling-erdos theorem in euclidean
  space}, https://doi.org/10.1007/s10959-016-0728-y (2016), 1--24.

\bibitem[GK91]{goodman-kuelbs-clustering-ss}
Victor Goodman and James Kuelbs, \emph{Rates of clustering for some {G}aussian
  self-similar processes}, Probab. Theory Related Fields \textbf{88} (1991),
  no.~1, 47--75. \MR{1094077 (92e:60077)}

\bibitem[GK92]{goodman-kuelbs-weakly}
V.~Goodman and J.~Kuelbs, \emph{Rates of clustering for weakly convergent
  {G}aussian vectors and some applications}, Probability in {B}anach spaces, 8
  ({B}runswick, {ME}, 1991), Progr. Probab., vol.~30, Birkh\"auser Boston,
  Boston, MA, 1992, pp.~304--324. \MR{1227627}

\bibitem[GKZ81]{lil-banach-special}
V.~Goodman, J.~Kuelbs, and J.~Zinn, \emph{Some results on the {LIL} in {B}anach
  space with applications\ to weighted empirical processes}, Ann. Probab.
  \textbf{9} (1981), no.~5, 713--752. \MR{82m:60011}

\bibitem[Goo88]{goodman-normal}
Victor Goodman, \emph{Characteristics of normal samples}, Ann. Probab.
  \textbf{16} (1988), no.~3, 1281--1290. \MR{942768}

\bibitem[Gro67]{gross-potential}
Leonard Gross, \emph{Potential theory on {H}ilbert space}, J. Functional
  Analysis \textbf{1} (1967), 123--181. \MR{0227747}

\bibitem[Kue76]{kuelbs-strong-convergence}
J.~Kuelbs, \emph{A strong convergence theorem for {B}anach space valued random
  variables}, Ann. Probability \textbf{4} (1976), no.~5, 744--771. \MR{0420771
  (54 \#8783)}

\bibitem[KZ]{partial-max-appendix}
James Kuelbs and Joel Zinn, \emph{Appendix for "limits for partial maxima of
  gaussian random vectors"}.

\bibitem[LeP73]{lepage-LIL}
Raoul~D. LePage, \emph{{${\rm Log\ log}$} law for {G}aussian processes}, Z.
  Wahrscheinlichkeitstheorie und Verw. Gebiete \textbf{25} (1972/73), 103--108.
  \MR{0331456}

\bibitem[LT88]{led-tal-LIL}
M.~Ledoux and M.~Talagrand, \emph{Characterization of the law of the iterated
  logarithm in {B}anach spaces}, Ann. Probab. \textbf{16} (1988), no.~3,
  1242--1264. \MR{942766}

\bibitem[Pic67]{pickands}
James Pickands, III, \emph{Maxima of stationary {G}aussian processes}, Z.
  Wahrscheinlichkeitstheorie und Verw. Gebiete \textbf{7} (1967), 190--223.
  \MR{0217866}

\bibitem[Rob52]{robbins-aspects}
Herbert Robbins, \emph{Some aspects of the sequential design of experiments},
  Bull. Amer. Math. Soc. \textbf{58} (1952), 527--535. \MR{0050246}

\end{thebibliography}

\end{document}